\renewcommand{\Re}{\mathop{\rm Re}\nolimits}
\def\S{\mathhexbox278}
\theoremstyle{plain}
\newtheorem{theorem}{Theorem}[section]
\newtheorem{lemma}[theorem]{Lemma}
\newtheorem{proposition}[theorem]{Proposition}
\newtheorem{corollary}[theorem]{Corollary}
\theoremstyle{definition}
\theoremstyle{remark}
\newtheorem{remark}[theorem]{Remark}
\newtheorem{notation}[theorem]{Notation}
\newtheorem{claim}[theorem]{Claim}
\newcommand{\R}{{\mathbb R}}
\newcommand{\N}{{\mathbb N}}
\def\im{{\rm i}}
\newcommand{\C}{\mathbb{C}}
\def\({\left(}
\def\){\right)}
\def\<{\left\langle}
\def\>{\right\rangle}
\newcommand{\sech}{{\mathrm{sech}}}
\newcommand{\supp}{{\mathrm{supp}\ }}
\newcommand{\Span}{{\mathrm{Span}}}
\numberwithin{equation}{section}
\begin{document}

\title{On stabilization at a soliton for generalized Korteweg--De Vries pure power equation for any power  $p\in (1,5)$ }

\author{Scipio Cuccagna, Masaya Maeda  }
\maketitle

\begin{abstract}   We apply our idea, which we previously used in the analysis of the pure power NLS, consisting of splitting the virial inequality method into a large energy inequality combined with Kato smoothing, to the case of generalized Korteweg--De Vries pure power equations.  We assume that a solution remains for all positive times very close to a soliton and then we prove an asymptotic stability result for $t\to +\infty$. 
\end{abstract}

\section{Introduction}

We consider on the line $\R $  the   generalized Korteweg--de Vries  (gKdV) equation
\begin{align}\label{eq:gKdV1}&
   \partial _t  u  =- \partial_x \(  \partial_x ^2  u +  f(u) \)
\end{align}
where $f(u)=|u|^{p-1}u$ or $|u|^p$ for $1<p<5$.
Formally, \eqref{eq:gKdV1} is a Hamiltonian system with the Hamiltonian/Energy $\mathbf{E}$ and   mass $\mathbf{Q}$, where
  \begin{align}\label{eq:energy}
& \mathbf{E}( {u})=\frac{1}{2}   \| \partial_x u \| ^2 _{L^2\( \R \) } -\int_{\R} F(u)    \,dx,\     \text{ where }  F(u)=\int_0^u f(s)\,ds    \text {  and} \\&   \label{eq:mass} \mathbf{Q}( {u})=\frac{1}{2}   \| u  \| ^2 _{L^2\( \R \) }.
\end{align}
By  solutions of \eqref{eq:gKdV1}  we mean functions $u\in C^0([0,T^*),H^1(\R))$ satisfying the integral equation
    \begin{align}\label{gKdVintegral}
            u(t)=e^{-t\partial_x^3}u_0 - \int_0^t e^{-(t-s)\partial_x^3} \partial_x (f(u(s)))\,ds  \text{  for all $t\in[0,T^*)$,}
    \end{align}
for some $0<T^*\leq \infty$.
\begin{remark}\label{rem:regularity}
    Since for the domain of $\partial _x^3$ in $H ^{-2}(\R)$ we have $D(\partial _x^3)=H^1(\R)$,     condition $u\in C^0([0,T^*),H^1)$ is the same as
    $u\in C^0(   [0,T^* ), D(\partial _x^3))$.
    Furthermore $\partial _x (f(u))  \in C^0\left(   [0,T^* ), L^2(\R)\right)$. Then we can apply Corollary 4.1.8 in Cazenave and Haraux \cite{CazHarbook} and conclude that
  equation \eqref{eq:gKdV1} holds in a distributional sense, in particular in $\mathcal{D}'([0,T^*), H ^{-2}(\R) )$, see \cite[Ch. 1]{CazHarbook}. If instead we have  $u\in C^0([0,T^*),H^3(\R))$ then equation \eqref{eq:gKdV1} holds in $\mathcal{D}'([0,T^*), L ^{ 2}(\R) )$.
\end{remark}

\begin{remark} In Appendix \ref{app:A}
we show   conservation of $\mathbf{Q}$ (but not of $\mathbf{E}$) for the $u$ in \eqref{gKdVintegral}.
\end{remark}

If $f\in C^3$, that is, if $p \in [3,5)$ when $f(u)=|u|^{p-1}u$, or $p\in (3,5)\cup\{2\}$ when $f(u)=|u|^p$, one can show that \eqref{eq:gKdV1} is globally well-posed in $H^1$ with both $\mathbf{E}$ and $\mathbf{Q}$ conserved, by arguments similar to those in \cite{KPV93CPAM}.
However, our focus in this paper is on small values of $p$, particularly $p\in (1,2)$,  and we will analyze the properties of the solution under the assumption of its existence.
For example, when $p=\frac{3}{2}$, \eqref{eq:gKdV1} is called the Schamel equation which appears in the study of ion-acoustic waves \cite{Schamel73JPP}.

For the local wellposedness result in a subset of $H^1$, see \cite{LiMiPo2019,FRRSY23Nonlinearity}.
Also, by following the argument of \cite{CP14Nonlinearity}, one can construct a weak solution of \eqref{eq:gKdV1} in the class $L^\infty(\R,H^1)$ with $\mathbf{E}$ and $\mathbf{Q}$ non-increasing.


The most important solutions of the gKdV equation \eqref{eq:gKdV1}, which are also central to this paper, are the soliton solutions $u(t,x)=\phi_c(x-ct-D)$, where $c>0$, $D\in \R$ and
\begin{align*}
\phi_c  (x):=c ^{\frac 1{p-1}} \phi (\sqrt{c }x) \text{ , } \phi(x) =  {\(\frac {p+1}2 \)^{\frac 1{p-1}}}{
\sech   ^{\frac 2{p-1}}\(\frac{p-1}2 x\)}  .
\end{align*}
It is well known, see e.g.\cite{benjamin1972,bona1975,bonasougstrauss,GSS1,weinLyap1986}, that solitons are trapped by $\mathbf{E}$ and $\mathbf{Q}$.
\begin{proposition}\label{prop:orbstab}
    For any $c>0$, there exist $\delta_0>0$ and $C>0$ such that for any $u\in C([0,T^*),H^1)$, satisfying $\mathbf{E}(u(t))=\mathbf{E}(u(0))$, $\mathbf{Q}(u(t))=\mathbf{Q}(u(0))$ for all $t\in (0,T^*)$ and
    \begin{align}\label{eq:ini}
        \delta:=\inf_{D\in \R}\|u(0)-\phi_c(\cdot-D)\|_{H^1}\leq \delta_0,
    \end{align}
    will satisfy
    \begin{align}\label{eq:orbstab}
    \sup _{t\in[0,T^*)} \inf_{D\in \R}  \|  u(t, \cdot   ) -\phi _c (\cdot -D) \| _{H^1}  \le C\delta,
  \end{align}
\end{proposition}

\begin{remark}
    In Proposition \ref{prop:orbstab}, $u$ need not be a solution of \eqref{gKdVintegral}.
\end{remark}

From Proposition \ref{prop:orbstab}, if we consider a solution of \eqref{gKdVintegral} satisfying the conservation of $\mathbf{E}$,
which all solutions of \eqref{gKdVintegral} satisfy if $f\in C^3$, with initial data near a soliton, then the solution will always stay near the soliton as long as the solution exists.
Here, recall $\mathbf{Q}$ is always conserved, see Proposition \ref{prop:consQ}.
In this paper, we will study the behavior of such solutions.

\begin{theorem}
  \label{thm:main} Let $ a>0$. Then for any   $\epsilon >0$   there exists a  $\delta_0  >0$   such that for any $0<T^*\leq \infty$ and any
  solution $u\in C^0([0,T^*),H^1(\R))$ of  \eqref{gKdVintegral} which satisfies $\mathbf{E}(u(t))=\mathbf{E}(u_0)$ for all $t\in (0,T^*)$ and \eqref{eq:ini},
  there exist  functions
  $ c \in C^1 ([0,T^*), (1-\epsilon,1+\epsilon) ),$ $D \in C^1 ([0,T^*), \R ) $ and
  $v\in C^0 ([0,T^*),H^1(\R))$
     such that
  \begin{align} \label{eq:asstab1}
    & u(t)=    \phi _{c (t)} (\cdot -D(t) ) + v(t, \cdot -D(t) ) \text{   with}
 \\&  \label{eq:asstab2}   \int _0^{T^*} \|  e^{- a|x|}   v (t ) \| _{H^1(\R )}^2 dt <  \epsilon.  
\end{align}
Moreover, if $T^*=\infty$, there exists $c_+>0$ s.t.
\begin{align}
    \label{eq:asstab20}
   &\lim _{t\to +\infty  }  \|  e^{- a| x|}   v (t ) \| _{L^2(\R )}     =0\text{ and} \\
&\lim _{t\to +\infty}c (t)= c _+ . \label{eq:asstab3}
\end{align}
\end{theorem}

\begin{remark} \label{rem:udermain}
    In the proof, the conservation of $\mathbf{E}$ will only used for Proposition \ref{prop:orbstab}.
    Thus, in Theorem \ref{thm:main}, we can replace the assumption of conservation of $\mathbf{E}$ by assuming instead \eqref{eq:orbstab}. We will prove Theorem \ref{thm:main} assuming $p\in (1,2)$. For $p\ge 2$  a slightly simpler proof is true.
\end{remark}

Our main result is inequality \eqref{eq:asstab2}, which is a surrogate of the virial inequality in  Lemma 2 in Martel and Merle
\cite{MaMeNonlinearity2005}.
  Thanks to the observation  in Remark \ref {rem:regularity}, if $u\in C^0([0,\infty),H^3(\R))$  it is possible to   follow basically verbatim the discussion
   in \S 3.1 in Martel and Merle
\cite{MaMeNonlinearity2005} which starts  from Step 2, and thanks to Theorem \ref{thm:main} prove  the following result.

\begin{corollary}
  \label{cor:main}  For the solutions in Theorem \ref{thm:main} which additionally satisfy  $u\in C^0([0,\infty),H^3(\R))$  the following limit holds,
   \begin{align*}
    \lim _{t\to +\infty }   \|  u(t, \cdot   ) -\phi _{c(t)} (\cdot -D (t)) \| _{H^1( \{ x: x\ge \frac{t}{10}    \} )}         =0 .
  \end{align*}

\end{corollary}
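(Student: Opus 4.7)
The plan is to follow \S 3.1 of Martel--Merle \cite{MaMeNonlinearity2005}, Step 2 onwards, essentially verbatim, with Theorem \ref{thm:main} replacing their Lemma 2. From Theorem \ref{thm:main}, write $u(t,x) = \phi_{c(t)}(x-D(t)) + v(t,x-D(t))$. The modulation equations yield $\dot D = c(t) + O(\|v\|_{H^1}^2)$ with $c(t)\to c_+ > 1-\epsilon$, so $D(t)/t \to c_+$; for $\epsilon$ small, the half-line $\{x\ge t/10\}$ translated to the soliton frame becomes $\{y \ge t/10 - D(t)\}$, which expands linearly to the left. The corollary thus amounts to $\|v(t)\|_{H^1(\{y \ge t/10 - D(t)\})} \to 0$.

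The core tool is the Kato-type monotonicity of cutoff-weighted mass and energy to the right of a slowly drifting point. Fix a smooth cutoff $\psi:\R\to(0,1)$ with $\psi(-\infty)=0$, $\psi(+\infty)=1$, $\psi'>0$ exponentially decaying, and $|\psi'''|\le K^{-2}\psi'$. For parameters $t_0, y_0 \in \R$ and a speed $\sigma \in (0, c_+)$, set
\begin{align*}
 I_{t_0,y_0}(t) &= \int_\R \psi\bigl(x-\sigma(t-t_0)-y_0\bigr)\, u^2(t,x)\, dx, \\
 J_{t_0,y_0}(t) &= \int_\R \psi\bigl(x-\sigma(t-t_0)-y_0\bigr)\, \bigl[(\partial_x u)^2 - 2F(u)\bigr](t,x)\, dx.
\end{align*}
Integration by parts against \eqref{eq:gKdV1} produces the standard identity
\begin{align*}
 \frac{dI_{t_0,y_0}}{dt} = -\sigma\!\int\!\psi' u^2 - 3\!\int\!\psi'(\partial_x u)^2 + \!\int\!\psi''' u^2 + 2\!\int\!\psi'\bigl(uf(u)-F(u)\bigr),
\end{align*}
and an analogous identity for $J$. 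Since $\|u\|_{L^\infty}$ is uniformly bounded by \eqref{eq:main1} and Sobolev embedding, on the support of $\psi'$ the nonlinear term is controlled by a small multiple of $\int \psi' u^2$; together with $|\psi'''|\le K^{-2}\psi'$, this yields almost-monotonicity of $I$ and $J$ for $K$ large and $\delta$ small.

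To close, for $\tau>0$ use \eqref{eq:asstab20} and \eqref{eq:asstab2} to choose $t_0$ arbitrarily large with $\|v(t_0)\|_{H^1}$ below $\tau$ on any prescribed bounded interval. Choose $\sigma$ and $y_0$ (e.g.\ $\sigma=1/20$, $y_0=t_0/20$) so that the cutoff at time $t_0$ sits to the left of the soliton center $D(t_0)$ while its trajectory $\sigma(t-t_0)+y_0$ remains strictly to the left of the line $\{x = t/10\}$ for $t \ge t_0$; then the non-soliton contributions to $I_{t_0,y_0}(t_0)$ and $J_{t_0,y_0}(t_0)$ are $O(\tau)$. Almost-monotonicity propagates these bounds to all $t \ge t_0$, and the coercivity of the cutoff-weighted energy on the orthogonal complement of the modulation directions---valid in the subcritical regime $1<p<5$ since $\phi_c$ is a constrained critical point of $\mathbf{E} + c\mathbf{Q}$---upgrades the resulting $L^2$ smallness of $v$ to the right of the cutoff into $H^1$ smallness, yielding the claim. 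The main anticipated obstacle is the familiar one from Martel--Merle: the non-favorable sign of $\int\psi'(uf(u)-F(u))$ and the degeneration of spectral coercivity as $p\uparrow 5$, both absorbed by the smallness \eqref{eq:main1} together with the exponential decay of $\psi'$ and the integral gain provided by \eqref{eq:asstab2}.
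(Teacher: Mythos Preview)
Your proposal is correct and matches the paper's own treatment exactly: the paper does not give a detailed proof of Corollary \ref{cor:main} but simply states that one follows \S 3.1 of Martel--Merle \cite{MaMeNonlinearity2005} from Step 2 onward essentially verbatim, with Theorem \ref{thm:main} playing the role of their Lemma 2---precisely your plan. Your sketch of the Kato-type monotonicity argument is a faithful outline of that reference (the one imprecision is that control of the nonlinear term $\int \psi'(uf(u)-F(u))$ requires not just $\|u\|_{L^\infty}$ bounded but smallness of $u$ on the effective support of $\psi'$, which comes from the positioning of the cutoff away from the soliton together with $\delta$ small; you invoke both ingredients elsewhere in the sketch).
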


In the proof of Theorem \ref{thm:main} we will assume $p\in (1,2)$ and $T^*=\infty$.  Since $H^1(\R)$ is an algebra, for $p\ge 2$ essentially the same arguments taken  verbatim   of the case $p<2$ hold. However,   for    $p\ge 2$ the proofs  could be made   simpler  thanks to the fact that the nonlinearity $u\to f(u)$ is more regular.  Even for some  powers  $p\ge 2$ our technique covers cases which appear not treated    by the very general result in Martel and Merle \cite{MaMegeneral2008}, which requires well posedness of the Cauchy problem.

\noindent There is an important literature about asymptotic convergence to solitons for gKdV equations starting from Pego and Weinstein \cite{PegoWei2} who proved asymptotic stability of solitons
in exponentially weighted spaces for the nonlinearity $f(u)=u^p$   with   $p=2,3,4$ and for $4<p<5$ in the case of $u$ positive. Pego and Weinstein \cite{PegoWei2}, using   the analysis of the  Evans functions in
Pego and Weinstein \cite{PegoWei1},  showed  also that the linearization, see \eqref{eq:Lin}, has no  eigenvalues different from 0 for almost all exponents $p$, see {Proposition} \ref{prop:PEGOWEI2} below.
 Mizumachi
\cite{MizuKdV2001} introduced different weights from Pego and Weinstein \cite{PegoWei1},   part  exponentials and   part polynomials,  useful for the analysis of multisolitons.
The case  with $f(u)=u^2$ is the integrable KdV for  which soliton resolution, that is a quite complete classification of all solutions which live  in a more restricted space than $H^1(\R )$,     can be obtained using the Inverse Scattering Transform, see for example   Grunert and Teschl \cite{GrunertTeschl}.

\noindent  Other notable  results are due to Martel and Merle
\cite{MaMeARMA2001,MaMeNonlinearity2005,MaMegeneral2008,MaMeRefined2008}. In particular  Martel and Merle \cite{MaMeNonlinearity2005} contains a very direct   proof of asymptotic stability   for $p=2,3,4$   based on the positivity, see  \cite[Proposition 3]{MaMeNonlinearity2005} with the proof in \cite{MaMeARMA2001},
 of a quadratic forms which appears in the course of the virial inequality argument used to prove Lemma 2 in \cite{MaMeNonlinearity2005}. Later  Martel and Merle
\cite{MaMegeneral2008}   generalized considerably their result focusing on a dual problem introduced by Martel \cite{MartelLinear}.
Further, for $p=2$, Mizumachi and Tzvetkov \cite{MT14RIMS} proved the asymptotic stability for $L^2$-initial data by combining Pego-Weinsten method with Martel-Merle (virial) method, see also \cite{Mizumachi09CMP}.
However, while  Martel and Merle
\cite{MaMegeneral2008}, as well as Mizumachi and Tzvetkov, holds  in contexts where  local well posedness is true in $H^1(\R )$ (resp.\ $L^2$), such local well posedness is not yet known for all our equations, especially when $p\in (1,2)$, as discussed by Linares et al.      \cite{LiMiPo2019}. Following  Martel and Merle, the  series  Kowalczyk et al. \cite{KMM2017,KMM2022,KMMV2021,KM22} splits  the virial inequality argument   in two parts, one involving first a high energy estimate and then, after a change of variables, a second, low energy, virial inequality.
An approach along the lines  of Kowalczyk et al.   has been used  for the asymptotic stability of the small amplitude  ground states of cubic quintic NLS's by Martel \cite{Martelcubquint1,Martelcubquint2} and  by Rialland   \cite{Riallandcuqui1,Riallandcuqui2} for more general small perturbations of the cubic NLS. In the series of papers   \cite{CM24D1,CM243,CM24D4}  devoted to the asymptotic stability of the ground states of the  pure NLS in dimension 1,  we  replaced  the  second  virial inequality with smoothing estimates.  In the present paper we apply this idea to gKdV \eqref{eq:gKdV1}.  We refer to    \cite{CM24D1,CM243,CM24D4} for a general discussion in the context of the NLS   of the advantages of this method, which subsist also
for gKdV  \eqref{eq:gKdV1},  as we  briefly explain now. Here we sidestep the issue of strict positivity of the quadratic form   with an  alternative  idea to  \cite{MaMegeneral2008}. We  first  bound, with rather routine computations,    high energies in terms of low energies. The low energy quantity which remains to bound is the error term, that is the difference of our solution minus the approximating soliton,   multiplied by a rapidly decaying cutoff. We estimate it thinking of \eqref{eq:gKdV1} as a perturbation of the linearization at the soliton, see \eqref{eq:gKdV3} below,  following the framework in  Pego and Weinstein \cite{PegoWei2} and in Soffer and Weinstein \cite{SW1,SW2}. Since the quantity we need to bound is multiplied by a spatial cutoff, and so satisfies the equation obtained multiplying   \eqref{eq:gKdV3} by a cutoff, the nonlinearity  becomes rather easy to treat,
much easier than in \cite{PegoWei2,SW1,SW2}. In this our method is similar to the  second virial inequality method, see for example in \cite{KMM2022}, where the nonlinear term is also easy. However by using a  Kato smoothing for the linearization, we do not need to devise the changes of coordinates in the papers of Martel and collaborators in \cite{KMM2017,KMM2022,KMMV2021,KM22,Martelcubquint1,Martelcubquint2} and in Rialland \cite{Riallandcuqui1,Riallandcuqui2}.

To develop the theory of Kato smoothing for the  linearized operator \eqref{eq:Lin} we exploit the work by Pego and Weinstein \cite{PegoWei1,PegoWei2}   and by Mizumachi
\cite{MizuKdV2001}, with the latter inspired also by Miller \cite{Miller1997} and by Murata \cite{murata1982}.  Kato smoothing is   reduced to estimates on the resolvent which are obtained    expressing the latter in terms of appropriate Jost functions. The theory of these  Jost functions is very well known in the case of Schr\"odinger operators in the line, see \cite{DT1979}. A generalization to higher order operators  is in the  monograph by Beals et al. \cite{Bealsbook1988}.  However here we develop the theory from the start.  It is interesting and crucial that the Jost functions $f_j(x,\lambda )$ for $j=1,2,3$ introduced later in \S \ref{sec:jost} satisfy better estimates than the Jost functions of scalar Schr\"odinger operators considered in Deift and Trubowitz
\cite{DT1979}, as can be seen comparing formulas  \eqref{eq:m1_estimate_partial}, \eqref{eq:m3_estimate_partial} and \eqref{eq:estm21}  with inequalities (i) and (ii) in \cite[p. 130]{DT1979}.

Finally we end this introduction remarking that, since the  work by Martel and Merle has been used as blueprint for a large variety of other problems also in higher dimension,   we think that   variations and somewhat different viewpoints might prove useful also to these problems, especially considering that the   notion of  smoothing  introduced by Kato \cite{katoMA1966} has seen    a wide range of successful  applications.

\section{Notation }\label{sec:notation}

\begin{notation}\label{not:notation} We will use the following miscellanea of  notations and definitions.
\begin{enumerate}

\item In the course of the paper we will sometimes use the notation $\dot u=  \partial _t  u$ and $  u'=  \partial _x  u$.


\item In analogy to  the theory of     Kowalczyk et al.   \cite{KMM2022},      we   consider constants  $A, B,  A_1, \epsilon , \delta  >0$ satisfying
 \begin{align}\label{eq:relABg}
\log(\delta ^{-1})\gg\log(\epsilon ^{-1}) \gg     A  \gg     B^2\gg B    \gg 1  \text{  and  $ A_1:= B ^{1/3}$.}
 \end{align}

 \item The notation    $o_{\varepsilon}(1)$  means a constant   with a parameter $\varepsilon$ such that
 $o_{\varepsilon}(1) \xrightarrow {\varepsilon  \to 0^+   }0.$

\item
Following Pego and Weinstein  \cite{PegoWei2} for $a\in \R $ we set
\begin{align}\label{eq:pwweigh0}&
   L^2_{a}=\{ v: \ e^{ax} v\in L^2(\R )   \}  \text{ with }\|  v  \| _{L^2_{a}}=\| e^{ax} v  \| _{L^2 (\R )}.
\end{align}

\item Following   Miller \cite{Miller1997}    and     Mizumachi \cite{MizuKdV2001}  we consider $h_{\alpha ,b}=\exp \( \int _0^x \omega  _{\alpha ,b} * \psi dx'    \)$ where $ \psi \in C^\infty _c \( ( -1, 1) , [0,1]\) $  with $\| \psi \| _{L^1(\R   )}=1$ and with\begin{align*} \omega  _{\alpha ,b} (x) := \left\{                              \begin{array}{ll} \alpha, & \hbox{ for $x\le 1$ and } \\  \alpha \(     x-1+ \frac{\alpha}{b} \) ^{-1} , & \hbox{for  $x\ge 1$.}\end{array}    \right.\end{align*}Notice that \begin{align*}  h_{\alpha ,b}^{(k)}(x) = \left\{  \begin{array}{ll}       O(  x ^{\alpha-k}), & \hbox{ for $x\to +\infty$ and } \\           O( e^{bx}) , & \hbox{for  $x\to -\infty$.} \end{array}                 \right.\end{align*}  We then define \begin{align} \label{eq:weightmiller}  \|   u\| _{L ^{ p}   _{\alpha ,b}  } :=\|     h_{\alpha ,b} u \| _{L^p(\R )} .\end{align}

\item We also define    \begin{align}\label{eq:Lebwight}&
   L ^{p,s}=\{ v: \ \< x \> ^{s}   v\in L^p(\R )   \}  \text{ with }\|  v  \| _{ L ^{p,s}}:=\| \< x \> ^{s} v  \| _{L^p(\R ) }.
\end{align}

\item Given two Banach spaces $X$ and $Y$ we denote by $\mathcal{L}(X,Y)$ the space of continuous linear operators from $X$ to $Y$. We write $\mathcal{L}(X ):=\mathcal{L}(X,X)$.

\item Following  the framework in Kowalczyk et al. \cite{KMM2022} we   fix an even function $\chi\in C_c^\infty(\R , [0,1])$ satisfying
\begin{align}  \label{eq:chi} \text{$1_{[-1,1]}\leq \chi \leq 1_{[-2,2]}$ and $x\chi'(x)\leq 0$ and set $\chi_C:=\chi(\cdot/C)$  for a  $C>0$}.
\end{align}

\item We consider a decreasing function $\vartheta _1\in C^\infty ( \R , [0,1])$ with $\vartheta _1(x) =\left\{
                                                                                                     \begin{array}{ll}
                                                                                                       1, & \hbox{ for $x\le 1$ } \\
                                                                                                       0, & \hbox{for $x\ge 2$  }
                                                                                                     \end{array}                                                                                         \right. $,   we set $ \vartheta _2=1-\vartheta _1 $ and consider the partition of unity $1= \vartheta _{1A_1} + \vartheta _{2A_1} $  where $ \vartheta _{iA_1}(x):=  \vartheta _{i }\(\frac{x}{A_1}\)$.

\item Following  the framework in Kowalczyk et al. \cite{KMM2022}  and
using the function $\chi$  in \eqref{eq:chi}
we  consider for   $C>0$  the   function
\begin{align}\label{def:zetaC}
\zeta_C(x):=\exp\(-\frac{|x|}{C}(1-\chi(x))\)   \text {  and   }  \varphi_C (x):=\int_0^x \zeta_C^2(y)\,dy.
\end{align}
We similarly define
\begin{align}\label{def:phiAi}   \varphi _{iAA_1}  (x):=\int_0^x \zeta_A^2(y) \vartheta _{iA_1} ^2 (y)\,dy.
\end{align}

\item For $x\in \R$ we set $x^\pm  :=\max (0, \pm x) $.

\item   We consider  norms
\begin{align}
&
\| v  \|_{  { \Sigma }_{iA A_1}} :=\left \|  \( \vartheta _{iA_1} \zeta_A v \) ' \right \|_{L^2(\R )} + \left \|   \vartheta _{iA_1} \zeta_A v      \right\|_{L^2(\R )}  \text{for $i=1,2$  and} \label{eq:normA}\\& \|  v \|_{ \widetilde{\Sigma} } :=\left \| \sech \(      \kappa  x\)   v\right \|_{L^2(\R )}     \text{ for a fixed sufficiently small   $\kappa  \in \(  0,   {p-1}   \)$ }. \label{eq:normk}
\end{align}

\item By a simple computation we have \begin{align}
      \partial _c   \phi _c(x)& = \partial _c \( c ^{\frac 1{p-1}} \phi (\sqrt{c }x)  \)  = \frac{c ^{\frac 1{p-1} -1}}{p-1} \phi (\sqrt{c }x) +\frac{1}{2}  c ^{\frac 1{p-1}-\frac{1}{2}} x \phi '(\sqrt{c }x)\nonumber  \\& =  \frac{c ^{  -1}}{p-1}  \phi _c(x) +\frac{c ^{  -1}}{2} x \phi _c'(x). \label{eq:partcphic}
    \end{align}

 \end{enumerate}

\end{notation} 

\section{Linearization and  modulation }\label{sec:mod}

We set \begin{align}\label{eq:lin1}&
  L _{+c}:=- \partial _x^2   +c -p \phi _c ^{p-1}  
  \\&
  \mathcal{L}_{[c]}
  =  \partial _x       L _{+c} \label{eq:Lin}
\end{align}
When $c=1$, we will write  simply $\mathcal{L}$ and $L_+$ and we will omit the $[c]$ also in the following functions which we take from Pego and Weinstein \cite[Eq. (2.9)]{PegoWei2},
\begin{align}& \label{eq:ker1}
  \xi _1 [c](x )   := \phi _c '(x )  , \quad   \xi _2 [c](x ):= \partial _c \phi _c (x )  \\& \eta _1 [c](x ):=\theta _1 \int _{-\infty} ^{x } \partial _c \phi _c (y) dy   +\theta _2 \phi _c(x ) , \quad   \eta _2 [c](x ):=\theta _3 \phi _c(x )  \label{eq:ker2}
\end{align}
where, 
\begin{align} -\theta _1=\theta _3=\frac{1}{\mathbf{q}'(c)    } \text{   and } \theta _2= \frac{\| \partial _c \varphi _c\| _{L^1}^2}{\mathbf{q}'(c)    }, \text{  with } \mathbf{q}(c):=\mathbf{Q}( \phi _{c})= \frac{1}{2} c^{\frac{5-p}{2(p-1)}} \| \phi \| _{L^2(\R )}^2.\label{eq:deftheta}
\end{align}
Notice here that $\mathbf{q}'(c)=\< \partial _c \phi _c,\phi _c \> $ and that
\begin{align}
  \label{eq:lambdap} \mathbf{q}'(1)=\< \Lambda_p  \phi  ,\phi  \>  \text{ where }\Lambda_p=\frac{1}{2}x\partial_x + \frac{1}{p-1}.
\end{align}
An elementary computation shows
\begin{align}
  \label{eq:dcphi}  \partial_{c} \(   c^{\frac{1}{p-1}}u(\sqrt{c}x)\)  = c^{\frac{2-p}{p-1}} (\Lambda_p u)(\sqrt{c}x).
\end{align}
For later use we record that
\begin{align}\nonumber
   \int _\R \partial _c \phi _c (y) dy & =  c^{\frac{2-p}{p-1} -\frac{1}{2}} \int _\R   (\Lambda_p \phi)( y) dy  =  c^{\frac{5-3p}{2(p-1)}  }   \int _\R  \( \frac{1}{2}y\phi '(y)  + \frac{1}{p-1} \phi  (y)  \) dy\\& =  c^{\frac{5-3p}{2(p-1)}  } \frac{2-p}{p-1}\| \phi \| _{L^1(\R )} .\label{eq:asseta1}
\end{align}
The functions in \eqref{eq:ker1}--\eqref{eq:ker2} satisfy, see \cite{PegoWei1},
\begin{align}\label{eq:M2.3}&
   \mathcal{L}_{[c]}\xi _1 [c]=0 , \quad  \mathcal{L}_{[c]}\xi _2 [c]=-\xi _1 [c] ,\\& \label{eq:M2.4}
  \mathcal{L}_{[c]}^*\eta _2 [c]=0 , \quad  \mathcal{L}_{[c]}^*\eta _1 [c]= \eta _2 [c],\\& \label{eq:M2.5}
  \<  \xi _i [c],\eta _j [c] \> = \delta_{ij}.
\end{align}
Let $ \displaystyle N_g( \mathcal{L}_{[c]}):=\bigcup _{n=1}^{+\infty}\ker   \mathcal{L}_{[c]}^n$. Pego and Weinstein \cite[Section 2]{PegoWei2} proved the following.

\begin{proposition} \label{prop:PEGOWEI2} Let here $a\in (0, \sqrt{c/3})$ and $1<p<  5$. The following hold.
  \begin{description}
  	  \item[i] As an operator defined in $L^2(\R)$ with domain $H^3( \R )$ we have $\sigma  \(    \mathcal{L}_{[c]}  ) \) =\im \R$.
     \item[ii]  Both in $L^2(\R)$ and $L^2_{a}(\R)$
\begin{align*} &
  \ker   \mathcal{L}_{[c]} =\Span \{     \xi _1 [c]\} \, , \quad N_g( \mathcal{L}_{[c]})=\Span \{     \xi _1 [c],   \xi _2 [c]   \} .
\end{align*}

  \item[iii]  For $\mathcal{L}_{[c]}^*:= -       L _{+c}\partial _x       $ in $L^2_{-a}(\R)$ we have
\begin{align*} & \ker   \mathcal{L}_{[c]} ^* =\Span \{     \eta _2 [c]\} \, , \quad N_g( \mathcal{L}_{[c]}^*)=\Span \{     \eta _1 [c],   \eta _2 [c]   \} .
\end{align*}

  \item[iv] The eigenvalues of $ \mathcal{L}_{[c]}$  in  $L^2(\R)$ and   in  $L^2_{a}(\R)$ are the same.

     \item[v]  The subset  $\mathbf{E}\subseteq (1,5)$ of $p$'s  where $ \mathcal{L}_{[c]}$ has a nonzero eigenvalue $\lambda $ with $\Re \lambda \ge 0$     is a discrete set which does not contain $2$ and 3.
   \end{description}
\end{proposition}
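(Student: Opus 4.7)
The plan is to follow the analytic and spectral framework of Pego--Weinstein \cite{PegoWei1,PegoWei2}, leveraging three ingredients: the self-adjoint structure of $L_{+c}$ as a Schr\"odinger operator with simple kernel $\Span\{\phi_c'\}$ and exactly one negative eigenvalue; the solvability identity $L_{+c}\partial _c\phi _c=-\phi _c$ obtained by differentiating the profile equation in $c$; and the Evans-function machinery developed in \cite{PegoWei1}.

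For \textbf{(i)}, I would write $\mathcal{L}_{[c]}=\mathcal{L}_\infty+V$ with $\mathcal{L}_\infty=-\partial_x^3+c\partial_x$ a Fourier multiplier whose symbol $\im\xi(c-\xi^2)$ lies in $\im\R$, while $V=p\partial_x(\phi_c^{p-1}\,\cdot\,)$ is relatively compact thanks to the exponential decay of $\phi_c$. Hence $\sigma_{\mathrm{ess}}(\mathcal{L}_{[c]})=\im\R$ in $L^2(\R)$. To conclude $\sigma(\mathcal{L}_{[c]})=\im\R$ one must exclude discrete eigenvalues off $\im\R$, and here the Evans function of \cite{PegoWei1} enters: for $1<p<5$ its zeros in $\{\Re\lambda>0\}$ are absent in $L^2$.

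For \textbf{(ii)} and \textbf{(iii)} the analysis is a careful Fredholm computation. If $\mathcal{L}_{[c]}u=0$, then $L_{+c}u$ is a constant, and decay of $u$ forces the constant to vanish, so $u\in\ker L_{+c}=\Span\{\phi_c'\}$. Iterating, $\mathcal{L}_{[c]}^2u=0$ yields $L_{+c}u=\alpha\phi_c+\mathrm{const}$ with the constant again killed by decay, and Fredholm solvability (using $\<\phi_c,\phi_c'\>=0$ by parity) together with $L_{+c}\partial _c\phi_c=-\phi _c$ gives $u\in\Span\{\xi_1[c],\xi_2[c]\}$. A further iteration would require solving $L_{+c}v=\partial _c\phi_c+\beta\phi _c'$; Fredholm permits this, but the resulting $v$ fails to belong to $L^2$ or $L^2_a$ for the admissible $a$, so the generalized kernel truncates at dimension $2$. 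The dual claim is parallel, working with $\mathcal{L}_{[c]}^*=-L_{+c}\partial _x$ in $L^2_{-a}$; here $\eta_1[c]$ picks up the growing antiderivative $\int^x\partial _c\phi_c$ which, by \eqref{eq:asseta1}, tends to a nonzero constant at $+\infty$ and so lies in $L^2_{-a}$ but not in $L^2$. The biorthogonality \eqref{eq:M2.5} is a direct verification that fixes the constants $\theta_1,\theta_2,\theta_3$.

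For \textbf{(iv)}, any eigenfunction of $\mathcal{L}_{[c]}$ satisfies a third-order ODE with exponentially asymptotically constant coefficients; for $\lambda$ off the essential spectrum, the characteristic roots at $\pm\infty$ separate cleanly into decaying and growing modes, and membership in either $L^2(\R)$ or $L^2_a(\R)$ selects the same decaying modes (here one uses $a<\sqrt{c/3}$ to control the spectral gap). For \textbf{(v)}, which is the deepest item, I would simply invoke \cite{PegoWei1,PegoWei2}: the Evans function $D(\lambda,p)$ is jointly analytic on $\{\Re\lambda\geq 0\}\times(1,5)$ away from $\lambda=0$, its zeros on $\{\Re\lambda>0\}$ are exactly the nonzero eigenvalues, and explicit verification at $p=2,3$ (done in \cite{PegoWei1}) shows no such zeros exist there; analyticity in $p$ and properness of the zero set then localize $\mathbf{E}$ to a discrete subset of $(1,5)$ missing $\{2,3\}$. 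The main obstacle is \textbf{(v)}: the Evans-function construction and the stability calculations at $p=2,3$ are the substance of \cite{PegoWei1} and I would not reprove them from scratch here.
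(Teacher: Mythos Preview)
The paper does not prove this proposition at all: it is stated as a quotation of Pego and Weinstein \cite[Section 2]{PegoWei2} and closed with a bare \qed. Your proposal is therefore not in competition with any argument in the paper; you have essentially reconstructed the outline of the Pego--Weinstein proof that the paper merely cites, and your sketch is accurate in its main lines (Weyl for the essential spectrum, the Jordan-chain computation using $L_{+c}\partial_c\phi_c=-\phi_c$ and the obstruction $\<\partial_c\phi_c,\phi_c\>=\mathbf q'(c)\neq 0$, asymptotic ODE analysis for the equality of $L^2$ and $L^2_a$ eigenvalues, and the Evans-function argument for item (v)).

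One small lacuna in your treatment of (i): after excluding eigenvalues with $\Re\lambda>0$ via the Evans function, you still need to rule out $\Re\lambda<0$. This follows from the reflection symmetry $x\mapsto -x$ (using that $\phi_c$ is even), which sends eigenvalues of $\mathcal L_{[c]}$ to their negatives; combined with complex conjugation this gives the quadruple symmetry $\{\lambda,\bar\lambda,-\lambda,-\bar\lambda\}$, so the right half-plane controls the left. Apart from this, your proposal is a faithful expansion of what the paper leaves to the citation.
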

\qed

In Sect. \ref{sec:noeigh}  we will prove the following.
\begin{proposition}\label{prop:noeigh}
 The set $\mathbf{E} $, given above in Proposition \ref{prop:PEGOWEI2}, is empty.
\end{proposition}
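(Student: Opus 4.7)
I would argue by contradiction, assuming $\lambda\in\mathbf{E}$ and that there exists a nonzero $\psi\in L^2_a(\R)$ (equivalently, by elliptic regularity for the third-order operator, in $H^3_a(\R)$) with $\mathcal{L}_{[c]}\psi=\lambda\psi$, $\Re\lambda\ge 0$ and $\lambda\ne 0$. The strategy has two distinct parts: a Hamiltonian symmetry identity to kill the strictly unstable case, and an Evans-function / Jost-function count, fed by the machinery developed later in the paper, to exclude purely imaginary nonzero eigenvalues.

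For the first part, exploit the factorization $\mathcal{L}_{[c]}=\partial_x L_{+c}$ with $L_{+c}$ self-adjoint. Setting $q:=L_{+c}\psi$, the eigenvalue equation is equivalent to the pair $L_{+c}\psi=q$, $q'=\lambda\psi$, from which one gets $L_{+c}q'=\lambda q$. When $\Re\lambda>0$, the right solution of $L_{+c}\psi=q$ decays at $-\infty$ by a standard contraction argument (the operator $L_{+c}$ tends to $-\partial_x^2+c$ at $-\infty$, whose stable manifold is one-dimensional and matches the decay of $q$ propagated from $+\infty$ by $q'=\lambda\psi$). Then one may integrate by parts, using symmetry of $L_{+c}$:
\[
\lambda\|q\|_{L^2}^2=\langle L_{+c}q',q\rangle=\langle q',L_{+c}q\rangle=-\overline{\langle L_{+c}q',q\rangle}=-\bar\lambda\|q\|_{L^2}^2 ,
\]
which forces $\Re\lambda=0$, ruling out $\Re\lambda>0$ (note that $q\equiv 0$ would give $\psi\in\ker L_{+c}=\Span\{\phi_c'\}$, contradicting $\lambda\ne 0$ via $\mathcal{L}_{[c]}\phi_c'=0$).

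The second, more delicate, part is to rule out $\lambda=i\mu$ with $\mu\in\R\setminus\{0\}$, for which the identity above is vacuous. Here I would use the Evans-type determinant $D(\lambda;p)$ built from the three Jost functions $f_1,f_2,f_3$ that the paper constructs in the section on Jost functions, with $a\in(0,\sqrt{c/3})$ chosen so that $D(\cdot;p)$ is analytic in a complex neighbourhood of $\{\Re\lambda\ge 0\}$ and real-analytic in $p\in(1,5)$; its zeros in that half-plane are exactly the elements of $\mathbf{E}$ (with multiplicity), plus a fixed-order zero at $\lambda=0$ coming from $\xi_1[c],\xi_2[c]$. Three inputs then combine: (i) the reduced value of $D$ at $\lambda=0$ is a nonzero multiple of $\mathbf{q}'(c)$, which is strictly positive for $p<5$ by the Vakhitov--Kolokolov identity \eqref{eq:notation1}, so no eigenvalue can bifurcate from $0$; (ii) uniform WKB asymptotics of $f_1,f_2,f_3$ as $|\lambda|\to\infty$ show $D(\lambda;p)$ tends to a nonvanishing limit on large arcs of $\{\Re\lambda\ge 0\}$, uniformly on compacta in $p$; (iii) the Jost-function representation of the resolvent yields uniform bounds on $\{\Re\lambda=0\}\setminus\{0\}$, precluding embedded zeros of $D$. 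By the argument principle, the number of zeros of $D(\cdot;p)$ in the open right half-plane plus on $i\R\setminus\{0\}$ is locally constant in $p$; since Proposition \ref{prop:PEGOWEI2}(v) gives it value $0$ at $p=2$, it is $0$ throughout $(1,5)$, so $\mathbf{E}=\emptyset$.

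The main obstacle is step (iii): establishing resolvent bounds on the imaginary axis is precisely the absence-of-embedded-eigenvalues problem we are trying to solve, so one must be careful to extract it directly from the structural properties of the Jost functions (specifically, the improved pointwise and $\lambda$-derivative estimates the authors emphasise in the introduction, sharper than those of Deift--Trubowitz \cite{DT1979}) rather than from an a priori absence of discrete spectrum. All other ingredients---the Hamiltonian identity, the Vakhitov--Kolokolov nondegeneracy at $\lambda=0$, large-$\lambda$ asymptotics, analyticity in $p$, and the $p=2$ base point from Pego--Weinstein---are either elementary or already in the literature.
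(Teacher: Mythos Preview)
Your Part 1 identity is incorrect. From $L_{+c}q'=\lambda q$ and the self-adjointness of $L_{+c}$ one gets
\[
\lambda\|q\|^2=\langle L_{+c}q',q\rangle=\langle q',L_{+c}q\rangle,
\]
but the next step, $\langle q',L_{+c}q\rangle=-\overline{\langle L_{+c}q',q\rangle}$, is false: the sign change you are tacitly invoking would require $(L_{+c}q)'=L_{+c}q'$, which fails because $L_{+c}$ has a variable potential. In fact the chain collapses to $\lambda\|q\|^2=\lambda\|q\|^2$, a tautology. The absence of eigenvalues with $\Re\lambda>0$ does hold for $p\in(1,5)$, but it is a consequence of orbital stability and index-counting arguments (see the references \cite{KS14SAM,LZ22Memoiers,Pelinovsky13NPS} the paper cites), not of a two-line Hamiltonian symmetry.

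For Part 2, you correctly identify the circularity in step (iii): uniform resolvent bounds on $i\R\setminus\{0\}$ are equivalent to the absence of embedded eigenvalues, so the argument-principle count cannot close without an independent input. The paper bypasses this entirely. It passes from $\mathcal{L}=\partial_xL_+$ to the adjoint-type operator $L_+\partial_x$ (same nonzero imaginary eigenvalues, by the substitution $\psi\mapsto L_+\psi$), and then applies a virial identity due to Martel and Merle \cite{MaMegeneral2008}: with $\mu=-\phi'/\phi$ and $w=\psi/\phi$, the equation $L_+\psi'=\im\lambda\psi$ forces
\[
0=\Re\langle \mu\psi,L_+\psi'\rangle=-\frac{3(p-1)}{2(p+1)}\int_\R |w'|^2\,\phi^{p+1}\,dx,
\]
hence $\psi$ is a constant multiple of $\phi$, contradicting $\lambda\neq 0$. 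This is a direct, $p$-independent computation requiring neither Evans-function analyticity in $p$ nor any continuation from $p=2$; the key algebraic identity is that $L_+(\psi\mu)$ simplifies drastically because $\phi$ solves $\phi''=\phi-\phi^p$.
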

 \proof For $p\in (1,5)$ the absence of eigenvalues $\lambda $ with $\Re \lambda > 0$  is well known. See, e.g.\ \cite{KS14SAM,LZ22Memoiers,Pelinovsky13NPS}.
 For the absence of eigenvalues $ \lambda \in \im \R$ with $\lambda \neq 0$, see \S \ref{sec:noeigh} below.   \qed

For  $c  _0 ,r \in \R_+:=(0,\infty)$ and for   $D _{H ^1  (\R )}(u,r):=\{v\in H ^1  (\R )\ |\ \|u-v\| _{H ^1  (\R )}<r \}$,  we set     $$\mathcal{U} (c _0 ,r  ) := \bigcup _{  x_0\in \R }  e^{-x_0 \partial _x}  D_{H ^1  (\R )}({\phi}_{c _0},r ). $$

\begin{lemma}[Modulation]\label{lem:mod1}  There exist     $\delta _0 >0$  and  $ B_0>0$   with $\delta _0 \ll B_0 ^{-3/2}$
such that for $  \delta _0 ^{-1 }\gg    B\ge B_0$ and for the function  $\zeta _{B}$ introduced in \eqref{def:zetaC} there are
$c,D     \in C^1\(  \mathcal{U} (1 ,\delta _0 ), \R _+  \times \R   \) $
 such that for  any $u \in \mathcal{U} (1,\delta _0   )$
\small \begin{align}\label{61} &  v (u):=e^{   D(u) \partial _x } u -  \phi  _{c (u)}     \text{ satisfies }   \\& \nonumber  \<    v (u), \zeta _{B} \eta _1 [c (u)]   \> =  \<    v (u),   \eta _2 [c (u)]   \> =0. \end{align} \normalsize
 \normalsize
 For any $c_0>0$   and $D_0\in \R$         we have the identities
\begin{align}& c  (  \phi   _{c_0}  )  =c_0  \text{, }   D  (  \phi   _{c_0}  )=0
  \text{, }   D  (  e^{  - D _0 \partial _x }  u)    = D  (    u)    + D_0 \text{ and } c  (  e^{  - D _0 \partial _x }  u)    =  c  (    u).\label{eq:equivariance}
\end{align}
Furthermore  there exists a constant $K >0$ such that
\begin{align}
  \label{eq:mod2}  B ^{-\frac{1}2}|D(u) -D_0| + |c(u)- 1|\le K       \delta _0 \text{ for all  } u\in  e^{-D_0 \partial _x}  D_{H ^1  (\R )}({\phi} ,\delta  _0    ).
\end{align}
\end{lemma}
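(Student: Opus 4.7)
I plan to establish the lemma by the implicit function theorem, with the novelty residing in tracking the weight $\zeta_B$ through the quantitative estimates. For $(c,D,u)\in\R_+\times\R\times H^1(\R)$ I define
\begin{align*}
F(c,D,u):=\bigl(\langle e^{D\partial_x}u-\phi_c,\zeta_B\eta_1[c]\rangle,\ \langle e^{D\partial_x}u-\phi_c,\zeta_B\eta_2[c]\rangle\bigr)\in\R^2,
\end{align*}
observe $F(1,0,\phi)=0$, and aim to apply the IFT at this reference point to solve $F(c,D,u)=0$ for $(c,D)$ as a $C^1$ function of $u$ near $\phi$.

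The key computation is the Jacobian $J_B:=\partial_{(c,D)}F\big|_{(1,0,\phi)}$. Using $\partial_D(e^{D\partial_x}u)=\partial_x(e^{D\partial_x}u)$ and \eqref{eq:ker1},
\begin{align*}
J_B=\begin{pmatrix}-\langle\xi_2,\zeta_B\eta_1\rangle & \langle\xi_1,\zeta_B\eta_1\rangle\\[2pt]-\langle\xi_2,\zeta_B\eta_2\rangle & \langle\xi_1,\zeta_B\eta_2\rangle\end{pmatrix}.
\end{align*}
Now $\xi_1=\phi'$, $\xi_2=\Lambda_p\phi$ and $\eta_2=\theta_3\phi$ decay exponentially, while $\eta_1$ is bounded, vanishing at $-\infty$ and tending at $+\infty$ to the nonzero constant $\theta_1\int_\R\partial_c\phi_c$ computed in \eqref{eq:asseta1}. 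Hence each entry is an absolutely convergent integral, and since $\zeta_B\to 1$ pointwise with $0<\zeta_B\le 1$, dominated convergence together with the biorthogonality \eqref{eq:M2.5} gives $J_B\to\left(\begin{smallmatrix}0 & 1\\ -1 & 0\end{smallmatrix}\right)$ as $B\to\infty$. Choose $B_0$ so that $|\det J_B|\ge 1/2$ for $B\ge B_0$; the IFT then yields a $C^1$ selector $(c,D)$ on an $H^1$-neighborhood of $\phi$. The translation covariance $F(c,D+D_0,e^{-D_0\partial_x}u)=F(c,D,u)$ combined with uniqueness in the IFT extends this selector to all of $\mathcal{U}(1,\delta_0)$ and yields the identities \eqref{eq:equivariance}.

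For the asymmetric bound \eqref{eq:mod2} I would apply the Lipschitz form of IFT: $(c(u)-c_0,D(u)-D_0)=-J_B^{-1}F(c_0,D_0,u)+O(\|F(c_0,D_0,u)\|^2)$. Cauchy--Schwarz gives
\begin{align*}
|F_j(c_0,D_0,u)|\le\|u-e^{-D_0\partial_x}\phi_{c_0}\|_{L^2}\,\|\zeta_B\eta_j[c_0]\|_{L^2}<\delta_0\,\|\zeta_B\eta_j[c_0]\|_{L^2}.
\end{align*}
Since $\eta_2$ decays exponentially, $\|\zeta_B\eta_2\|_{L^2}=O(1)$, whereas $\zeta_B^2(x)=e^{-2|x|/B}$ for $|x|\ge 2$ combined with the nonzero limit of $\eta_1$ at $+\infty$ forces $\|\zeta_B\eta_1\|_{L^2}\sim B^{1/2}$. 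With $J_B^{-1}\approx\left(\begin{smallmatrix}0 & -1\\ 1 & 0\end{smallmatrix}\right)$, these bounds convert into $|c(u)-c_0|\lesssim\delta_0$ and $|D(u)-D_0|\lesssim B^{1/2}\delta_0$, which is exactly \eqref{eq:mod2}.

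The main obstacle I anticipate is keeping the IFT uniform in $B$. The $D$-radius of the region on which $(c,D)$ must be unambiguously defined grows like $B^{1/2}\delta_0$, so $\partial_{(c,D)}F$ must remain close to $J_B$ across that entire region. The dangerous term, $\langle e^{D\partial_x}u-\phi_c,\zeta_B\partial_c\eta_1[c]\rangle$, inherits from $\partial_c\eta_1$ the non-decay at $+\infty$ and so carries an $L^2$-weight of order $B^{1/2}$; propagating such powers of $B$ through the second-order remainder in the IFT iteration is exactly what produces the smallness hypothesis $\delta_0\ll B_0^{-3/2}$ stated in the lemma.
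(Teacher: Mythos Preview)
Your approach is essentially the paper's: set up the two orthogonality conditions as $F=0$, apply the implicit function theorem near $(c,D,u)=(1,0,\phi)$, and propagate to all of $\mathcal U(1,\delta_0)$ by translation covariance. Your Jacobian computation and its limit $\left(\begin{smallmatrix}0&1\\-1&0\end{smallmatrix}\right)$ via \eqref{eq:M2.5} are correct, as is the observation that $\|\zeta_B\eta_1\|_{L^2}\sim B^{1/2}$ is what forces the asymmetry in \eqref{eq:mod2}.

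Where the paper differs is in how it handles the obstacle you name in your last paragraph. Rather than applying the IFT in the raw variables and then chasing powers of $B$ through the quadratic remainder, the paper rescales \emph{first}: set $\widetilde D:=B^{-1/2}(D-D_0)$, $G_1:=B^{-1/2}F_1$, $G_2:=F_2$. A direct computation then gives $\partial_{(\widetilde D,c)}G=\sigma_3+O(B^{-1/2})$ uniformly over the region $\|e^{D\partial_x}u-\phi_c\|_{L^2}\le B^{-1}$, and $\|\partial_u G_i\|_{H^1}\lesssim 1$ with no $B$-growth. The IFT therefore delivers $(c,\widetilde D)$ with constants independent of $B$; undoing the rescaling gives \eqref{eq:mod2} immediately, and the bootstrap closing the condition $\|v\|\le B^{-1}$ is what produces $\delta_0\ll B^{-3/2}$.

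A small caution about your Lipschitz-IFT formula: the remainder is not literally $O(\|F\|^2)$ but $O\big(\sup\|D^2_{(c,D)}F\|\cdot|J_B^{-1}F|^2\big)$, and some second derivatives (e.g.\ $\partial_c\partial_D F_1$, which contains $\langle (e^{D\partial_x}u)',\zeta_B\partial_c\eta_1\rangle$) carry a factor $B^{1/2}$. Your plan can still be pushed through by hand, since the dominant step $|\Delta D|\sim B^{1/2}\delta_0$ enters squared while these derivatives stay $O(B^{1/2})$ at worst, but the rescaling makes this a one-line Jacobian bound rather than a term-by-term accounting.
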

\begin{proof}
     Set   $F =  \( F_1  ,  F_2  \) ^\intercal$        with               \small
\begin{align*}
  F_i(u, D, c) =  \left\{
                    \begin{array}{ll}
                       \<   u-e^{  - D   \partial _x } \phi  _c ,e^{  - D   \partial _x } \( \zeta _{B}   \eta _1  [c]  \)  \> , & \hbox{ for $i=1$;} \\
                        \<   u-e^{  - D   \partial _x } \phi  _c ,e^{  - D   \partial _x }     \eta _2  [c]    \>  & \hbox{for $i=2$.}
                    \end{array}
                  \right.
\end{align*} \normalsize
We now introduce the variable
\begin{equation}\label{eq:tildeD}
 \widetilde{D} := B ^{-\frac{1}{2}}(D-D_0)
\end{equation}
 and we set
\begin{align*}
  G_1(u, \widetilde{D}, c) = B ^{-\frac{1}{2}} F_1(u, B ^{ \frac{1}{2}}\widetilde{D} +D_0, c) \text{   and }  G_2(u, \widetilde{D}, c) =   F_2(u, B ^{ \frac{1}{2}}\widetilde{D} +D_0, c)
\end{align*}
Then, from $\< \xi _1 [c],  \zeta _{B}   \eta _1  [c] \> =   \delta _{1i} +  O\( B^{-1}    \)$, we have
\begin{align*}
    \partial _{\widetilde{D}} G_1(u, \widetilde{D}, c)  &= \< \xi _1 [c],  \zeta _{B}   \eta _1  [c] \> -  \<   u - e^{  - D   \partial _x } \phi  _c ,e^{  - D   \partial _x } \(  \zeta _{B}'    \eta _1  [c]   +  \zeta _{B}     \eta _1 ' [c] \)  \> \\& =   1 +  O\( B^{-1}    \) +      O\( \|  u - e^{  - D   \partial _x } \phi  _c \| _{L^2(\R )}    \)
\end{align*}
 and, using  \eqref{eq:M2.5},
\begin{align*}
    \partial _{\widetilde{D}} G_2(u, \widetilde{D}, c)  &=   - B ^{ \frac{1}{2}}  \<   u - e^{  - D   \partial _x } \phi  _c ,e^{  - D   \partial _x }       \eta _2 ' [c]   \> \\& =       O\( B^{1/2}   \|  u - e^{  - D   \partial _x } \phi  _c \| _{L^2(\R )}    \) .
\end{align*}
 From $\< \xi _2 [c],  \zeta _{B}   \eta _1  [c] \> =      O\( B^{-1}    \)$, we have
 \begin{align*}
    \partial _{c} G_1(u, \widetilde{D}, c)  &= -B ^{- \frac{1}{2}}\< \xi _2 [c],  \zeta _{B}   \eta _1  [c] \>  + B ^{- \frac{1}{2}} \<   u - e^{  - D   \partial _x } \phi  _c ,e^{  - D   \partial _x } \(  \zeta _{B}    \partial _c \eta _1  [c]  \)  \> \\& = O\( B^{- \frac{3}{2}}    \) + O\(   \|  u - e^{  - D   \partial _x } \phi  _c \| _{L^2(\R )}    \)
\end{align*}
 and, using  \eqref{eq:M2.5},
 \begin{align*}
    \partial _{c} G_2(u, \widetilde{D}, c)  &= - \< \xi _2 [c],     \eta _2  [c] \>  +   \<   u - e^{  - D   \partial _x } \phi  _c ,e^{  - D   \partial _x } \(  \zeta _{B}    \partial _c \eta _2  [c]  \)  \> \\& = -1+ O\( B^{- 1}    \) + O\(   \|  u - e^{  - D   \partial _x } \phi  _c \| _{L^2(\R )}    \) .
\end{align*}
 So,  for \begin{align}
    \|  u - e^{  - D   \partial _x } \phi  _c \| _{L^2(\R )} \le B ^{-1}  \label{eq:implf3}
\end{align}
  we conclude
 \begin{align} \label{eq:implf1}
   \frac{\partial  G(u, \widetilde{D}, c)}{\partial (\widetilde{D}, c)}
&=  \sigma _3 +O(B ^{-1/2})  \text{ where }\sigma _3 :=\left(
                 \begin{array}{cc}
                                                                                  1 & 0 \\
                                                                                 0 & -1 \\
                                                                                \end{array}
                                                                              \right) .   \end{align}
 Using also \eqref{def:zetaC}, we have
\begin{align*}&
    \partial _{u}G_i(u, \widetilde{D}, c)=\left\{
                                            \begin{array}{ll}
                                              B^{- \frac{1}{2}}  e^{  - D   \partial _x }\( \zeta _{B}   \eta _1  [c]\), & \hbox{ if $i=1$ and } \\
                                               e^{  - D   \partial _x }    \eta _2  [c]  , & \hbox{if $i=2$.}
                                            \end{array}
                                          \right.
\end{align*}
Then $\|  \partial _{u}G_i(u, \widetilde{D}, c) \| _{H^1} \lesssim 1. $
The higher order derivatives of $G(u, \widetilde{D}, c)$ are similarly bounded   with bounds which depend, continuously,  only  on $c$.
Then for any  $c_0$    there are  $\delta =\delta (c_0  ) >0$   with $\delta \ll B ^{-3/2}$     and $K= K(c_0)  $ such that for any $D_0$ there is       an implicit function
\begin{align*}
   (c(\cdot ), \widetilde{D}(\cdot ) )\in  C^1\(     e^{-D_0 \partial _x}  D_{H ^1  (\R )}({\phi}_{c _0},\delta     )  ,   \(  c_0-  K    \delta   ,  c_0+  K   \delta     \) \times \(   -  K     \delta   ,    K     \delta    \)  \)
\end{align*}
with $c(e^{-D_0 \partial _x} {\phi}_{c _0})=c_0$ and $\widetilde{D}(e^{-D_0 \partial _x} {\phi}_{c _0})=0$.
So, going back using \eqref{eq:tildeD} to the variable $D$,  we have with the same constants \small
\begin{align}\label{eq:implf2}
  (c(\cdot ), D(\cdot ) )\in  C^1\(     e^{-D_0 \partial _x}  D_{H ^1  (\R )}({\phi}_{c _0},\delta     )  ,   \(  c_0-  K    \delta   ,  c_0+  K   \delta     \) \times \(  D_0-  K    B ^{\frac{1}2} \delta   ,  D_0+  K   B ^{\frac{1}2}  \delta    \)  \)
\end{align}
\normalsize
with $c(e^{-D_0 \partial _x} {\phi}_{c _0})=c_0$ and $c(e^{-D_0 \partial _x} {\phi}_{c _0})=D_0$.
  Notice that
\begin{align}\label{eq:orbest}
 \|  u - e^{  - D   \partial _x } \phi  _c \| _{L^2(\R )} \le \|  u -  e^{-D_0 \partial _x} {\phi}_{c _0} \| _{L^2(\R )} +\|   e^{-D_0 \partial _x} {\phi}_{c _0} - e^{  - D   \partial _x } \phi  _c \| _{L^2(\R )}\le C(K) B ^{\frac{1}2}  \delta
\end{align}
so that the choice  $\delta \le  \frac{1}{ C(K) B ^{3/2} }$  yields \eqref{eq:implf3}. Here we have $F(u,D(u),c(u))=0$ identically.

\noindent For any $  y\in \R $  we have identically
\begin{align*}
  F( e^{-y \partial _x} u, D+y, c)= F(u, D, c)
\end{align*}
which differentiating in $y$ at $y=0$ implies
\begin{align*}
   \partial _D F(u, D, c)=   \partial _u F(   u, D , c)  u'.
\end{align*}
Then by the chain rule, differentiating in $y$ at $y=0$  at a $u\in  e^{-D_0 \partial _x}  D_{H ^1  (\R )}({\phi}_{1},\delta (1) )$     we obtain \small
\begin{align*}
  0=\left . \partial _y  F( e^{-y \partial _x} u, D(e^{-y \partial _x} u) , c(e^{-y \partial _x} u))\right | _{y=0}=  - \partial _D F(u, D, c)  \(   1+ dD(u) u' \) - \partial _c F(u, D, c)    dc(u) u'
\end{align*}\normalsize
where $dc(u) , dD(u) \in \mathcal{L}\( H^1 \( \R \) \)$ are the Frech\'et differentials of the functions in \eqref{eq:implf2}.
Then  by \eqref{eq:implf1}, by taking $\delta (1) >0$ small enough, we see that the last equalities can be true only for
$dc(u) u'\equiv 0$ and $ dD(u) u'\equiv  -1$   that is $ \left . \frac{d}{dy}D( e^{-y \partial _x}   u) \right | _{y=0}=1$ for all $u$, i.e. we obtain \eqref{eq:equivariance}. Then we can extend
$c,D     \in C^1\(  \mathcal{U} (1 ,\delta (1)   ), \R _+  \times \R   \) $ by this equivariance property.

\noindent  Finally notice that the estimate in \eqref{eq:mod2} is implicit in \eqref{eq:implf2}.
\end{proof}

For $  u\in C^0\(   [0,+\infty ), H^1(\R )\)$  the solution of \eqref{eq:gKdV1} we have $  u\in C^1\(   [0,+\infty ), H ^{-2}(\R )\)$    and the function
$(c(t),D(t)) = (c(u(t)),D(u(t))) \in C^1([0,+\infty ), \R _+ \times \R )     $. 
Using  Lemma \ref{lem:mod1} we can insert in \eqref{eq:gKdV1} the ansatz $u=\phi _c(\cdot -D) + v(\cdot -D)$  obtaining
\begin{align}     \nonumber &
    \dot v        -   (\dot D -c )  \  v' -   ( \dot D -c ) \  \xi _1[c] + \dot c \ \xi _2[c]       \\& =   \mathcal{L}_{[c]}   v   -  \partial _x    \( f( \phi  _{c} + v  )  -  f( \phi  _{c} )-  f'( \phi  _{c} ) v\)     .   \label{eq:gKdV3}
\end{align}
Notice that setting $\widetilde{v}(t):=e^{t\partial _x ^3}{v}(t)$ we have
\begin{align}
  \label{eq:dertildev}    e^{-t\partial _x ^3}\dot {\widetilde{v}}  = c v'   -  \partial _x    \( f( \phi  _{c} + v  )  -  f( \phi  _{c} ) \)  + (\dot D -c )  \  v' +   ( \dot D -c ) \  \xi _1[c] - \dot c \ \xi _2[c]
\end{align}
which yields
\begin{align}
  \label{eq:regtildev}   \widetilde{v}\in  C^0\(   [0,+\infty ), H ^{ 1}(\R )\)  \bigcap  C^1\(   [0,+\infty ), L ^{ 2}(\R )\).
\end{align}
The proof of Theorem \ref{thm:main} is mainly  based on the following continuation argument, where the norms are defined in Notation \ref{not:notation}.

\begin{proposition}\label{prop:continuation}
For any small $ \epsilon >0$ and for large constants $A$, $B$  and  $A_1:= B ^{1/3}$ as in formula \eqref{eq:relABg}
there exists  a    $\delta _0= \delta _0(\epsilon )   $ s.t.\  if
\begin{align}
  \label{eq:main2}
\|v \| _{L^2(I,  { \Sigma }_{1A A_1} )} +
\|v \| _{L^2(I,  { \Sigma }_{2A A_1} )} +  \|v \| _{L^2(I, \widetilde{\Sigma}  )} + \| \dot D - c \| _{L^2(I  )} + \| \dot c \| _{L^2(I  )}\le \epsilon
\end{align}
holds  for $I=[0,T]$ for some $T>0$ and if  the constant $C\delta$ in \eqref{eq:orbstab} satisfies  $\delta  \in (0, \delta _0)$
then in fact for $I=[0,T]$    inequality   \eqref{eq:main2} holds   for   $\epsilon$ replaced by $   o _{\varepsilon }(1) \epsilon $.
\end{proposition}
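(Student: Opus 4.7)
The plan is a bootstrap on $[0,T]$ organized around three sub-estimates — modulation, high-energy virial, and low-energy Kato smoothing — whose combination closes with an extra factor of $o_\epsilon(1)$.

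First I would derive the modulation ODEs. Differentiating the identities $\langle v(t),\zeta_B\eta_j[c(t)]\rangle = 0$ of Lemma \ref{lem:mod1} in $t$ and substituting \eqref{eq:gKdV3}, then using \eqref{eq:M2.3}--\eqref{eq:M2.5} together with the near-biorthogonality $\langle \xi_i,\zeta_B\eta_j\rangle = \delta_{ij} + O(B^{-1})$, yields a linear system for $(\dot c,\dot D - c)$ whose right-hand side is quadratic in $v$. The $v$-linear contribution vanishes by orthogonality, while the nonlinear remainder $f(\phi_c+v)-f(\phi_c)-f'(\phi_c)v$ is localized by the rapid decay of $\eta_j$. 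Inverting the nearly-diagonal linear system (using \eqref{eq:implf1}) yields the pointwise bound $|\dot c(t)| + |\dot D(t) - c(t)| \lesssim \|v(t)\|_{\Sigma_A}^2$, whose $L^2(I)$-norm is already $o_\epsilon(1)\epsilon$ by \eqref{eq:main2}.

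Next I would run a high-energy virial in the spirit of Kowalczyk--Martel--Muñoz, using the weight $\varphi_A$ from \eqref{def:zetaC}. Testing \eqref{eq:gKdV3} against an appropriate virial multiplier of the form $\varphi_A v$ together with a correction built from $\phi_c$, the principal quadratic form generated by $\mathcal{L}_{[c]}$ is coercive in the $\Sigma_A$-norm up to an error supported where $\phi_c$ concentrates, controlled by the $\widetilde\Sigma$-norm. The nonlinear contribution is of order $\|v\|_{H^1}^{p-1}\|v\|_{\Sigma_A}^2 = o_\delta(1)\|v\|_{\Sigma_A}^2$, and the modulation forcing is absorbed via Step 1 and Cauchy--Schwarz. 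Integrating in $t$, using that the virial functional is pointwise $O(\delta^2)$, yields
\begin{equation*}
\|v\|_{L^2(I,\Sigma_A)}^2 \lesssim \delta^2 + A\,\|v\|_{L^2(I,\widetilde\Sigma)}^2 + o_\delta(1)\|v\|_{L^2(I,\Sigma_A)}^2 + o_\epsilon(1)\epsilon^2.
\end{equation*}

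The new ingredient, and the main obstacle, is the low-energy bound on $\|v\|_{L^2(I,\widetilde\Sigma)}$. Setting $w := \sech(\kappa x)\,v$, the function $w$ satisfies $\partial_t w = \widetilde{\mathcal{L}}_{[c]} w + \mathcal{N}$, where $\widetilde{\mathcal{L}}_{[c]}$ is the conjugation of $\mathcal{L}_{[c]}$ by $\sech(\kappa x)$ — a well-defined bounded perturbation of $-\partial_x^3$ thanks to \eqref{eq:condk0} — and $\mathcal{N}$ gathers the modulation source (controlled by Step 1), a nonlinear remainder bounded by $\|v\|_{H^1}^{p-1}\|w\|$ with the cutoff saving regularity, and commutators with the weight. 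Combining Propositions \ref{prop:PEGOWEI2} and \ref{prop:noeigh} — which jointly rule out embedded eigenvalues — with the Jost-function resolvent bounds developed later in the paper, one obtains a Kato-smoothing estimate for $e^{t\mathcal{L}_{[c]}}$ on the continuous spectral subspace cut out by the orthogonality relations of Lemma \ref{lem:mod1}. A Duhamel argument then gives
\begin{equation*}
\|v\|_{L^2(I,\widetilde\Sigma)} \lesssim \delta + o_\epsilon(1)\(\|v\|_{L^2(I,\Sigma_A)} + \|\dot c\|_{L^2(I)} + \|\dot D - c\|_{L^2(I)}\).
\end{equation*}
Feeding this into Step 2 absorbs the $\widetilde\Sigma$-term on the right, and feeding the resulting $\Sigma_A$-bound back into Step 1 closes the bootstrap, yielding each of the four quantities in \eqref{eq:main2} at the desired scale $o_\epsilon(1)\epsilon$ provided $B$ is large and $\delta_0$ small under the hierarchy \eqref{eq:relABg}. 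The decisive difficulty is the smoothing estimate: it rests on the full Jost-function analysis of $\mathcal{L}_{[c]}$ to extract uniform resolvent bounds, and the low regularity $p\in(1,2)$ demands particular care in the nonlinear control of $\mathcal{N}$.
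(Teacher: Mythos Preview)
Your three-step architecture (modulation $\to$ virial $\to$ smoothing) matches the paper's decomposition into Lemmas \ref{lem:lemdscrt}, \ref{prop:1virial}, \ref{prop:smooth11}, but two of the steps have genuine gaps.

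In Step 1 the claim that ``the $v$-linear contribution vanishes by orthogonality'' is false. Pairing $\mathcal{L}_{[c]}v$ against $\zeta_B\eta_j[c]$ gives $\langle v,\mathcal{L}_{[c]}^*(\zeta_B\eta_j[c])\rangle$; the orthogonality \eqref{61} kills only $\langle v,\zeta_B\,\mathcal{L}_{[c]}^*\eta_j[c]\rangle$, and the commutator term $\langle v,[\mathcal{L}_{[c]}^*,\zeta_B]\eta_j[c]\rangle$ survives and is linear in $v$. The paper's actual bound (Lemma \ref{lem:lemdscrt}) is therefore linear, $|\dot c|+|\dot D-c|\lesssim B^{-1}\bigl(\|v\|_{\widetilde\Sigma}+\|v\|_{\Sigma_A}\bigr)$, with the smallness coming from the $B^{-1}$ in $\zeta_B'$ rather than from a quadratic gain. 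More seriously, in Step 3 you set $w=\sech(\kappa x)\,v$ and conjugate by the \emph{fast} weight $\sech(\kappa x)$: the resulting commutators with $\partial_x^3$ are $O(\kappa)=O(1)$, and your Duhamel scheme provides no mechanism making them small---so the $o_\epsilon(1)$ factor you write has no source. The paper's device is essentially different: one multiplies by the \emph{slow} cutoff $\zeta_B$, projects by $Q$ onto the continuous subspace of the \emph{fixed} operator $\mathcal{L}=\mathcal{L}_{[1]}$, and applies Kato smoothing (Proposition \ref{prop:smooth}). The forcing then splits into $[\zeta_B,\mathcal{L}]v$, which carries $B^{-1}$ from $\zeta_B'$ and yields $B^{-1/2}\|v\|_{L^2(I,\Sigma_A)}$; a term $\zeta_B(\mathcal{L}_{[c]}-\mathcal{L})v$ carrying $|c-1|\lesssim\delta$; and the $\zeta_B$-localized nonlinearity giving $B^{p/2}\delta^{p-1}\|v\|_{L^2(I,\Sigma_A)}$. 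All are $o_{B^{-1}}(1)\epsilon$ under the hierarchy \eqref{eq:relABg}. This two-scale structure---$\zeta_B$ as cutoff, $\sech(\kappa x)$ only as the target norm---is precisely what closes the loop, and it is absent from your sketch. (As a smaller point, the extra factor $A$ on $\|v\|_{L^2(I,\widetilde\Sigma)}^2$ in your Step 2 would also prevent closure, since $A\gg B$ while the smoothing gains only $B^{-1/2}$; the paper's virial Lemma \ref{prop:1virial} carries no such factor.)
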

 We will split the proof of Proposition \ref{prop:continuation} in a number of partial results obtained assuming the hypotheses of Proposition  \ref{prop:continuation}. The first is the following, proved in \S  \ref{sec:discr}.
\begin{lemma}\label{lem:lemdscrt} We have the estimates
 \begin{align} \label{eq:discrest1}
   &\|\dot c   \| _{L^2(I)} +    \|\dot D -c \| _{L^2(I)}    \lesssim  B ^{-1}    \(  \|v \| _{L^2(I,  { \Sigma }_{1A A_1} )} +
\|v \| _{L^2(I,  { \Sigma }_{2A A_1} )} \)   .
\end{align}

\end{lemma}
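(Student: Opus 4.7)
The plan is to differentiate in $t$ the two modulation identities $\< v(t), \zeta_B \eta_j[c(t)]\> = 0$, $j=1,2$, substitute \eqref{eq:gKdV3} for $\dot v$, and solve the resulting $2 \times 2$ linear system
\begin{align*}
(\dot D - c)\,\<\xi_1[c] + v', \zeta_B \eta_j\> + \dot c\,\bigl(-\<\xi_2[c], \zeta_B \eta_j\> + \<v, \zeta_B \partial_c \eta_j\>\bigr) = r_j,
\end{align*}
where $r_j := -\<\mathcal{L}_{[c]} v, \zeta_B \eta_j\> + \<\partial_x N(v), \zeta_B \eta_j\>$ and $N(v) := f(\phi_c + v) - f(\phi_c) - f'(\phi_c) v$. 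Using the biorthogonality \eqref{eq:M2.5}, the exponential decay of $\xi_i[c]$, the fact that $\zeta_B \equiv 1$ on $[-B,B]$, the bound \eqref{eq:mod2}, and $A \gg B$, the coefficient matrix is a small perturbation of $\diag(1,-1)$, hence has uniformly bounded inverse.

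The heart of the argument is the estimate $|r_j| \lesssim B^{-1}(\|v\|_{\widetilde{\Sigma}} + \|v\|_{\Sigma_A})$. For the linear piece I would write
\begin{equation*}
-\<\mathcal{L}_{[c]} v, \zeta_B \eta_j\> = -\<v, \zeta_B \mathcal{L}_{[c]}^* \eta_j\> - \<v, [\mathcal{L}_{[c]}^*, \zeta_B]\eta_j\>,
\end{equation*}
and exploit the adjoint relations \eqref{eq:M2.4}: for $j=1$ the first term reduces to $\<v, \zeta_B \eta_2\>$, which vanishes by modulation, and for $j=2$ it vanishes trivially. This cancellation is the crucial gain. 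Writing $\mathcal{L}_{[c]}^* = \partial_x^3 - c\partial_x + p\phi_c^{p-1}\partial_x$, the commutator $[\mathcal{L}_{[c]}^*, \zeta_B]\eta_j$ is a sum of terms each carrying at least one derivative of $\zeta_B$, and therefore each gains a factor $B^{-1}$ via \eqref{eq:zetaCestimatederivatives}. Separating the terms in which $\eta_j^{(k)}$ is only bounded (paired against $v$ via the $\sech(x/A)$ weight, using $A \gg B$ so that $\sech^{-1}(x/A)\zeta_B^{(\ell)}$ is integrable) from the ones in which $\eta_j^{(k)}$ or $\phi_c^{p-1}$ decays exponentially (paired via the $\sech(\kappa x)$ weight) yields the desired $B^{-1}(\|v\|_{\widetilde{\Sigma}} + \|v\|_{\Sigma_A})$.

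The nonlinear piece $\<\partial_x N(v), \zeta_B \eta_j\> = -\<N(v), \zeta_B' \eta_j + \zeta_B \eta_j'\>$ will be handled by the pointwise bound $|N(v)| \lesssim \phi_c^{p-2}|v|^2 + |v|^p$ together with $\|v\|_{L^\infty} \lesssim \delta$ coming from \eqref{eq:main1}. One extracts a factor of $\delta$ (or $\delta^{p-1}$) from $N(v)$ and absorbs the rest into $\|v\|_{\widetilde{\Sigma}} + \|v\|_{\Sigma_A}$, using the $B^{-1}$ decay from $\zeta_B'$ in one summand and the exponential decay from $\eta_j'$ in the other. Since $\delta \ll B^{-1}$ by \eqref{eq:relABg}, this contribution is subsumed in the main bound, and multiplying by the inverse of the coefficient matrix delivers \eqref{eq:discrest1}.

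The main obstacle is the nonlinear estimate when $p \in (1,2)$: since $f''$ is singular at the origin, the Taylor bound $|N(v)| \lesssim |v|^2$ only holds in the region $\{|v| \leq \phi_c/2\}$, where the weight $\phi_c^{p-2}$ must be absorbed by the exponentially decaying factors already in place. On the complement $\{|v| > \phi_c/2\}$ one must fall back on the coarser bound $|N(v)| \lesssim |v|^p$ and recover the necessary power of $\delta$ from $\|v\|_{L^\infty}^{p-2}$; combining the two regimes while keeping enough decay available to close the weighted pairing is the technical heart of the computation.
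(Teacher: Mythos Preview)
Your plan coincides with the paper's: differentiate the orthogonality conditions, use \eqref{eq:M2.4} to kill the main adjoint term, estimate the commutator $[\mathcal{L}_{[c]}^*,\zeta_B]\eta_j$, handle the nonlinearity by splitting $\{|v|\lesssim\phi_c\}$ versus $\{|v|\gtrsim\phi_c\}$, and invert the nearly-diagonal coefficient matrix. The nonlinear discussion and the recognition of the $p<2$ obstacle are both on target.

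There is, however, a genuine gap in your commutator estimate. Among the terms of $[\mathcal{L}_{[c]}^*,\zeta_B]\eta_j$ is $-c\,\zeta_B'\eta_j$. For $j=2$ this is harmless since $\eta_2=\theta_3\phi_c$ decays, but $\eta_1(x)=\theta_1\int_{-\infty}^x\partial_c\phi_c\,dy+\theta_2\phi_c$ converges to a \emph{nonzero constant} as $x\to+\infty$. Your scheme---pair $\sech(x/A)v$ against $\cosh(x/A)\zeta_B'\eta_1$ in $L^2$---then yields only
\[
|c\<v,\zeta_B'\eta_1\>|\ \lesssim\ B^{-1}\,\|\sech(\cdot/A)v\|_{L^2}\,\|\cosh(\cdot/A)\zeta_B\|_{L^2}\ \sim\ B^{-1}\cdot B^{1/2}\,\|v\|_{\Sigma_A}\ =\ B^{-1/2}\|v\|_{\Sigma_A},
\]
since $\|\cosh(\cdot/A)\zeta_B\|_{L^2}\sim B^{1/2}$ when $A\gg B$. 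This is a factor $B^{1/2}$ short of \eqref{eq:discrest1}. The paper recovers the full $B^{-1}$ by an extra use of the modulation condition: one writes $\zeta_B'=-B^{-1}\sign(x)\,\zeta_B+(\text{compactly supported})$, and then
\[
B^{-1}\<v,\sign(x)\zeta_B\eta_1\>\ =\ -2B^{-1}\<v,\zeta_B\eta_1\>_{L^2(\R_-)}\ +\ B^{-1}\underbrace{\<v,\zeta_B\eta_1\>}_{=0\text{ by \eqref{61}}},
\]
so only the half-line $\R_-$ survives, where $\eta_1$ \emph{does} decay exponentially and the Cauchy--Schwarz pairing gives the honest $B^{-1}\|v\|_{\Sigma_A}$. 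You should flag this step explicitly.

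Two minor corrections: first, $\zeta_B\equiv 1$ only on $[-1,1]$, not on $[-B,B]$ (see \eqref{def:zetaC}); the biorthogonality perturbation $\<\xi_i,\zeta_B\eta_j\>=\delta_{ij}+O(B^{-1})$ instead follows from $|\zeta_B(x)-1|\lesssim|x|/B$ together with the exponential decay of $\xi_i$. Second, from \eqref{eq:estvH1} one has $\|v\|_{L^\infty}\lesssim\|v\|_{H^1}\lesssim B^{1/2}\delta$ rather than $\delta$; this does not affect the closure since $B^{1/2}\delta\ll B^{-1}$ by \eqref{eq:relABg}.
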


In \S  \ref{sec:virial} we will prove the following.
\begin{lemma}[Virial Inequality]\label{prop:1virial}
 We have
 \begin{align}\label{eq:sec:1virial1}
   \| v \| _{L^2(I,  { \Sigma }_{1A A_1}   )}& \lesssim \sqrt{A B} \delta+   \| v\| _{L^2(I, \widetilde{\Sigma}   )}+B ^{-1}  \| v \| _{L^2(I,  { \Sigma }_{2A A_1}   )}   \text{ and} \\   \label{eq:sec:1virial2}
   \| v \| _{L^2(I,  { \Sigma }_{2A A_1}   )}& \lesssim \sqrt{A B} \delta+ A _1 ^{-50}  \| v\| _{L^2(I, \widetilde{\Sigma}   )}   + A _1 ^{-10}  \| v \| _{L^2(I,  { \Sigma }_{1A A_1}   )}   .
 \end{align}
 \end{lemma}
Notice that $\sqrt{A B} \delta  \ll  A ^{-1}\epsilon ^2 =o _{B^{-1}} (1)  \epsilon ^2 $  in \eqref{eq:sec:1virial1}
and similarly below $ A _1  ^{3/2} B ^{-1}  = o _{B^{-1}} (1) $ and  $A _1  ^{-50} B ^{ 1/2}   = o _{B^{-1}} (1) $
where the following will be proved    from  \S \ref{sec:disp} to \S\ref{sec:proofsmooth2}.

\begin{lemma}[Smoothing Inequality]\label{prop:smooth11}
 We have
 \begin{align}\label{eq:sec:smooth11}
   \| v \| _{L^2(I, \widetilde{\Sigma}   )} \lesssim       A _1  ^{3/2} B ^{-1}   \| v \| _{L^2(I,  { \Sigma }_{1A A_1}   )} +     B ^{ 1/2}   \| v \| _{L^2(I,  { \Sigma }_{2A A_1}   )} +  o _{B^{-1}} (1)  \epsilon     .
 \end{align}
 \end{lemma}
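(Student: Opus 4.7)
My plan is to treat \eqref{eq:gKdV3} as a perturbation of the autonomous linear flow generated by $\mathcal{L}_{[c_0]}$ for the reference speed $c_0 := c(0)$, and to derive \eqref{eq:sec:smooth11} from Kato-type smoothing for this flow, whose theory will occupy \S\ref{sec:disp}--\S\ref{sec:proofsmooth2}. Rewriting \eqref{eq:gKdV3} as
\begin{align*}
\dot v = \mathcal{L}_{[c_0]} v + \bigl(\mathcal{L}_{[c]} - \mathcal{L}_{[c_0]}\bigr) v + (\dot D - c)\bigl(v' + \xi _1[c]\bigr) - \dot c\, \xi _2[c] - N(v),
\end{align*}
with $N(v) := \partial_x \bigl( f(\phi _c + v) - f(\phi _c) - f'(\phi _c) v \bigr)$, one observes via \eqref{eq:mod2} that $\mathcal{L}_{[c]} - \mathcal{L}_{[c_0]}$ is an $O(\delta)$ exponentially localized multiplication operator composed with $\partial_x$, while the sources on the right are either discrete modes or quadratic in $v$.

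Since $\sigma(\mathcal{L}_{[c_0]}) = \im\R$ with no nonzero eigenvalues (Propositions \ref{prop:PEGOWEI2}--\ref{prop:noeigh}), I introduce the spectral projection $P_c$ complementary to the two-dimensional generalized kernel $\Span\{\xi _1[c_0], \xi _2[c_0]\}$. Using the orthogonality \eqref{61}, the exponential decay of $\eta _j[c_0]$, and the convergence $\zeta_B \eta _j[c] \to \eta _j[c]$ in $L^2$ as $B \to \infty$, one obtains $\|(I - P_c) v\|_{L^2} \lesssim (|c-c_0| + e^{-B})\|v\|_{L^2}$, so it suffices to bound $w := P_c v$ in $L^2(I, \widetilde{\Sigma})$. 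The key analytic inputs will be the homogeneous and inhomogeneous Kato smoothing estimates
\begin{align*}
\bigl\| e^{t\mathcal{L}_{[c_0]}} P_c w_0 \bigr\|_{L^2(\R_+, \widetilde{\Sigma})} \lesssim \|w_0\|_{L^2}, \quad \Bigl\| \int_0^t e^{(t-s)\mathcal{L}_{[c_0]}} P_c \partial_x F(s)\, ds \Bigr\|_{L^2(I, \widetilde{\Sigma})} \lesssim \bigl\| \sech(\kappa x)^{-1} F \bigr\|_{L^2_t L^2_x},
\end{align*}
the derivative in the second being absorbed by the smoothing gain.

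Applying Duhamel to $w$, the initial-data piece contributes $\|v(0)\|_{L^2} \lesssim \delta = o_{B^{-1}}(1)\epsilon$ via \eqref{eq:relABg}. The discrete source $(\dot D - c) \xi _1[c] - \dot c\, \xi _2[c]$ is killed by $P_c$ up to an error of size $\delta(|\dot D - c| + |\dot c|)$, which by Lemma \ref{lem:lemdscrt} yields a time-integrated contribution $\lesssim \delta B^{-1}\bigl(\|v\|_{L^2(I,\widetilde{\Sigma})} + \|v\|_{L^2(I,\Sigma_A)}\bigr)$. The transport correction $(\dot D - c) v'$ and the perturbation $(\mathcal{L}_{[c]} - \mathcal{L}_{[c_0]})v$ both produce terms bounded by $\delta\, \|v\|_{L^2(I,\widetilde{\Sigma})}$ after the inhomogeneous smoothing estimate, using $\|\dot D - c\|_{L^\infty_t} \lesssim \delta$ from \eqref{eq:mod2} and the exponential localization of the coefficients.

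The principal obstacle is the nonlinear term $N(v)$, especially for $p \in (1,2)$ where $f'(u) = p|u|^{p-1}$ is only H\"older. Here I would use the pointwise estimate
\begin{align*}
\bigl|f(\phi _c + v) - f(\phi _c) - f'(\phi _c) v\bigr| \lesssim \min\bigl(\phi _c^{p-2} v^2,\; |v|^p\bigr)
\end{align*}
and split according to $|v| \le \phi _c/2$ or $|v| > \phi _c/2$. The upper bound $\kappa < p-1$ in \eqref{eq:condk0} is placed precisely so that, after pairing with the smoothing weight $\sech(\kappa x)$, the growth of $\phi _c^{p-2}$ on the first region is compensated by one factor of $v$ measured in $\Sigma_A$; the resulting contribution is $\lesssim \|v\|_{L^2(I,\Sigma_A)} \|v\|_{L^\infty_t H^1} \lesssim \bigl(\sqrt{AB}\delta + \|v\|_{L^2(I,\widetilde{\Sigma})}\bigr)\delta$ by Lemma \ref{prop:1virial}. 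On the second region, Sobolev embedding gives $\|v\|_{L^\infty_x}^{p-1} \lesssim \delta^{p-1}$, leaving a remaining factor of $\|v\|_{L^2(I,\widetilde{\Sigma})}$. Summing all contributions and choosing $B$ large and $\delta$ small, the $\|v\|_{L^2(I,\widetilde{\Sigma})}$ terms on the right are absorbed into the left, producing the required bound.
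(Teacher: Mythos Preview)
There is a genuine structural gap: you apply smoothing directly to $v$ (via $w=P_c v$), but several of the source terms in \eqref{eq:gKdV3} cannot be placed in the exponentially weighted space your inhomogeneous estimate demands, and in fact the projection $P_c$ you invoke is not even defined on $H^1(\R)$.

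Concretely: by \eqref{eq:ker2} and \eqref{eq:asseta1} the function $\eta_1[c_0]$ tends to a nonzero constant as $x\to +\infty$, so it is not in $L^2$ and the pairing $\<\eta_1[c_0],v\>$ does not converge for a generic $v\in H^1$; your claim of ``exponential decay of $\eta_j[c_0]$'' is false for $j=1$. Even setting this aside, your right-hand side $\|\sech(\kappa x)^{-1}F\|_{L^2_tL^2_x}$ requires the source $F$ itself to decay like $\sech(\kappa x)$. The transport correction $(\dot D-c)v'$ carries only $v'\in L^2(\R)$ with no spatial decay, so $\cosh(\kappa x)v'\notin L^2$; and \eqref{eq:mod2} bounds $|D-D_0|$ and $|c-1|$, not $|\dot D-c|$. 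Likewise, on $\{|v|>\phi_c/2\}$---a region that extends to arbitrarily large $|x|$ since $\phi_c$ decays exponentially---your bound $|F|\lesssim |v|^p$ gives $\cosh(\kappa x)|v|^p\notin L^2_x$. The ``remaining factor of $\|v\|_{L^2(I,\widetilde{\Sigma})}$'' you produce conflates the decaying output weight $\sech(\kappa x)$ with the growing input weight $\sech(\kappa x)^{-1}$.

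The paper's device is precisely what is missing: multiply \eqref{eq:gKdV3} by the cutoff $\zeta_B$ and work with $\zeta_B v$. Then $\zeta_B v\in L^2_a$ so the spectral projection $Q$ of \eqref{eq:projections} is well defined; every source becomes $\zeta_B$-localized and the smoothing estimate (Proposition \ref{prop:smooth}) is stated with $L^2_tL^1_x$ input rather than an exponentially weighted $L^2$; and the commutator $[\zeta_B,\mathcal{L}]v$ carries $\zeta_B'=O(B^{-1}\zeta_B)$, which is exactly what yields the gain $o_{B^{-1}}(1)$ in \eqref{eq:sec:smooth11}.
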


\textit{Proof of Theorem \ref{thm:main}.}
From Lemmas \ref{prop:1virial}  and \ref{prop:smooth11}  and from \eqref{eq:relABg} we have  \small
\begin{align*}
  & \| v \| _{L^2(I, \widetilde{\Sigma}   )}  \lesssim A _1  ^{3/2} B ^{-1}   \| v \| _{L^2(I,  { \Sigma }_{1A A_1}   )} +     B ^{ 1/2}   \| v \| _{L^2(I,  { \Sigma }_{2A A_1}   )} +  o _{B^{-1}} (1)  \epsilon   \\& \lesssim    A _1  ^{3/2} B ^{-1}  \(        \| v\| _{L^2(I, \widetilde{\Sigma}   )}+B ^{-1}  \| v \| _{L^2(I,  { \Sigma }_{2A A_1}   )}  \) +     B ^{ 1/2} \(   A _1 ^{-50}  \| v\| _{L^2(I, \widetilde{\Sigma}   )}   + A _1 ^{-10}  \| v \| _{L^2(I,  { \Sigma }_{1A A_1}   )}   \)    +  o _{B^{-1}} (1)  \epsilon  \\& \lesssim   \(  A _1  ^{3/2} B ^{-1} + B ^{ 1/2}    A _1 ^{-50} \) \| v\| _{L^2(I, \widetilde{\Sigma}   )} +   A _1  ^{3/2} B ^{-2}  \| v \| _{L^2(I,  { \Sigma }_{1A A_1}   )} +  B ^{ 1/2}    A _1 ^{-10}   \| v \| _{L^2(I,  { \Sigma }_{2A A_1}   )}  +  o _{B^{-1}} (1)  \epsilon
\end{align*}
\normalsize
which by $ A _1  = B ^{ - \frac{1 }{3}}$ and   bootstrap   yields
\begin{align*}
   \| v \| _{L^2(I, \widetilde{\Sigma}   )} &  \lesssim  B ^{ - \frac{1}{2}} \(  \| v \| _{L^2(I,  { \Sigma }_{1A A_1}   )}   +     \| v \| _{L^2(I,  { \Sigma }_{2A A_1}   )}   \)  +  o _{B^{-1}} (1)  \epsilon
\end{align*}
 so that  by \eqref{eq:main2}  we obtain $ \| v \| _{L^2(I, \widetilde{\Sigma}   )}= o _{B^{-1}} (1)  \epsilon$ which inserted in
\eqref{eq:sec:1virial1}   yields   $  \| v \| _{L^2(I,  { \Sigma }_{1A A_1}   )}= o _{B^{-1}} (1)  \epsilon$ and
in
\eqref{eq:sec:1virial2}   yields   $  \| v \| _{L^2(I,  { \Sigma }_{2A A_1}   )}= o _{B^{-1}} (1)  \epsilon$. In turn these inserted in    \eqref{eq:discrest1}  yield $  \|\dot c   \| _{L^2(I)} +    \|\dot D -c \| _{L^2(I)} = o _{B^{-1}} (1)  \epsilon$. This proves   Proposition \ref{prop:continuation}. By $u\in C^0([0,+\infty),H^1(\R))$ it is elementary that  this implies that   we can take $I=\R _+$ in all the above inequalities. This in particular implies \eqref{eq:asstab2}.
Then we can proceed like in \cite[Proof of Theorem 1.2]{CM24D1} considering
  \begin{equation*}
  \begin{aligned}
   \mathbf{a}(t) &:= 2 ^{-1}\|  e^{- a\< x\>}   v (t)  \| _{L^2(\R )} ^2 = 2 ^{-1}\< e ^{t\partial _x ^3} e^{- 2a\< x\>} e ^{-t\partial _x ^3}\widetilde{v} (t) , \widetilde{v} (t) \> .
  \end{aligned}
  \end{equation*}
Taking time derivative  we claim   that $t\to \mathbf{a}(t)$ is an absolutely continuous function with
\begin{align*}
   \dot {\mathbf{a}}(t)&=   \frac{1}{2} \< \left [      \partial _x ^3,  e^{- 2a\< x\>}    \right ] v , v \>       + \<  e^{- 2a\< x\>}   {v} (t) , e ^{-t\partial _x ^3}\dot {\widetilde{v}} (t) \>  .
\end{align*}
This indeed is the case if $\widetilde{v}$ is regular and can be obtained for general $\widetilde{v} $ like in  \eqref{eq:regtildev}  taking a sequence of regular functions  $\{   \widetilde{v}_n\} _{n\in \N}$ with $\widetilde{v}_n \xrightarrow{n\to +\infty} \widetilde{v}$ locally uniformly in the space in  \eqref{eq:regtildev}.
Then we obtain  \small
\begin{align*}
  \dot {\mathbf{a}}   & =  \frac{1}{2} \< \left [      \partial _x ^3,  e^{- 2a\< x\>}    \right ] v , v \> +   \frac{1}{2}   \dot D       \< \left [   e^{- 2a\< x\>} ,   \partial _x   \right ] v , v \>
   +    (\dot D-c)   \<  \xi _1[c]  , e^{- 2a\< x\>} v    \>   \\& -\dot c   \<  \xi _2[c]  , e^{- 2a\< x\>} v    \>  +   \<
          f( \phi  _{c} + v  )  -  f( \phi  _{c}  )     , \left [      \partial _x  ,  e^{- 2a\< x\>}    \right ] v  + e^{- 2a\< x\>}  v'   \>  .
\end{align*}\normalsize
We exploit the fact that when $ u\in   e^{-D_0 \partial _x}  D_{H ^1  (\R )}({\phi} ,\delta     )$
\begin{align}\label{eq:estvH1}
   \|   v\|  _{H^1(\R ) } & =  \|  e^{   D (u) \partial _x } u -  \phi  _{c (u) }    \|  _{H^1(\R ) }  \le   \|  u  -  e^{-D_0 \partial _x}  \phi  _{c (u) }    \|  _{H^1(\R ) }
\\& + \|  e^{  \( D (u) -D_0\) \partial _x } \phi -  \phi  _{c (u) }    \|  _{H^1(\R ) }\lesssim \delta + |D (u) -D_0| + |c(u)-1|\lesssim B ^{\frac{1}{2}} \delta .\nonumber
\end{align}
Then    by Lemma \ref{lem:lemdscrt}
\begin{align*}
  | \dot {\mathbf{a}}  |& \lesssim |\dot c| ^2 + |\dot D-c|^2 + \| v \| _{H^1}^2\lesssim B \delta^2
\end{align*}
and since we already know from \eqref{eq:asstab2} that $\mathbf{a}\in L^1(\R )$, we conclude that
$ \mathbf{a}(t) \xrightarrow{t\to +\infty} 0$.

\noindent
Next we prove  \eqref{eq:asstab3} by proceeding like in \cite {CM24D1}.
We can  take   $ 0<a\le \kappa /2$ such that we have the  following, which will be used below where  we have the Japanese bracket $\<x\> :=\sqrt{1+x^2}$,
\begin{equation}\label{eq:uniformity1}
  e^{-2a\<x\>}\gtrsim    \max \{   \phi _{c(t)} ,  \phi _{c(t)} ^{p-1} \}  \text {  for all } t\ge 0  .
\end{equation}
Since $\mathbf{Q}(\phi_{c})=\mathbf{q}(c)$ is strictly  monotonic in $c$, it suffices to show that  the  $\mathbf{Q}(\phi_{c (t)})$ converges as $t\to\infty$.
From the conservation of $\mathbf{Q}$, see Proposition \ref{prop:consQ}, the exponential decay of $\phi _c$, \eqref{eq:asstab2} and   \eqref{eq:asstab20}, we have
\begin{align}\label{eq:equipart1}
\lim_{t\to \infty}\(\mathbf{Q}(u_0)-\mathbf{Q}(\phi_{c (t)})-\mathbf{Q}(v (t))\)=0.
\end{align}
Thus, our task is now to prove $\frac{d}{dt}\mathbf{Q}(v) \in L^1(\R _+) $, which is sufficient to show the convergence of $\mathbf{Q}(v)$.
Like  $\mathbf{a}(t)$ also $\mathbf{Q}(v(t))$ is absolutely continuous in $t$ with derivative
\begin{align*}
\frac{d}{dt}\mathbf{Q}(v)&= \frac{d}{dt}\mathbf{Q}(\widetilde{v})  =    \<v,  e^{-t\partial _x ^3} \dot{\widetilde{v}}\>
=  \dot D   \cancel{\<v, {v}'\> } + \left [ (\dot D -c ) \<v, \xi _1[c]\> -  \dot c \<v, \xi _2[c]\>  \right ]\\&   +\<v',f( \phi  _{c} + v  )  -  f( \phi  _{c} )\>
=:I+II
\end{align*}
where the canceled term is null and are hence neglected.  Then  \eqref{eq:main2} implies that  $I\in L^1(\R _+)$.   So the key term is $II$.  We partition  for $s\in (0,1)$ the line    as\begin{align}&
   \Omega_{1,t ,s} :=\{x\in \R\ |\  2| s v(t,x)| \le  \phi   _{c (t)}(x) \}  \text{ and }\label{eq:partition}\\&  \Omega_{2,t,s } :=\R\setminus \Omega_{1,t,s }  =\{x\in \R\ |\ 2| s v(t,x)|> \phi   _{c (t)}(x)\} .\nonumber
\end{align}
Then, we have
\begin{align*}
II(t)&=\sum_{j=1,2}   \int_0^1ds \int_{\Omega_{j,t ,s }} \(   f '( \phi_{c (t)}+ sv (t)) - f '(   sv (t))\)    v(t)  v' (t)        \,dx\\& =:II_1(t)+II_2(t).
\end{align*}
We have
\begin{align*}
|II_1(t)|&\lesssim    \int_{\R}  \phi_{c (t)} ^{p-1} |v (t)v' (t)  | \,dx    \lesssim \|  e^{- a\<x\>}   v (t)\| _{H^1 (\R )}  ^2 \in L^1(\R _+ ).
\end{align*}
We have
\begin{align*}
   II_2(t) &=   - \int _{[0,1] ^2 }  d\tau ds  \int_{\Omega_{2,t,s }}   f''(\tau \ \phi_{c (t)}+s v (t))  \phi_{c (t)}   v(t)  v' (t)        dx .
\end{align*}
Then by  $| sv(t,x)|> 2\phi_{c (t)} (x)$,     we have the following, which yields \eqref{eq:asstab3},
\begin{align*}
   |II_2(t)| &\lesssim    \int _{[0,1] ^2 }  d\tau  ds \int_{\Omega_{2,t,s }}       | s v (t)  | ^{p-2}  \phi_{c (t)}  | v(t)  v' (t)   |    dx \\& \lesssim    \int_{\R }        \phi_{c (t)}  ^{p-1}   | v(t)  v' (t)   |      dx \lesssim    \|  e^{- a\<x\>}    v (t)\| _{H^1 (\R )}  ^2 \in L^1(\R _+ ).
\end{align*}
\qed

\section{Bounds on the discrete modes: proof of Lemma \ref{lem:lemdscrt} }\label{sec:discr}

We will prove the following, which implies \eqref{eq:discrest1},
\begin{align} \label{eq:discrest1stat}
   & |\dot c    |  +     |\dot D -c  |     \lesssim  B ^{-1}    \(  \|v \| _{   { \Sigma }_{1A A_1} } +
\|v \| _{  { \Sigma }_{2A A_1} } \)   .
\end{align}
We apply  to equation \eqref{eq:gKdV3} the inner product $\< \cdot , \zeta_{B} \eta _1[c] \>$ obtaining, thanks to cancellations due to \eqref{eq:M2.4} and \eqref{61},
\begin{align}& \nonumber
  - ( \dot D -c ) \left [ \< \xi _1[c] , \zeta_{B} \eta _1[c] \>  +\< v' , \zeta_{B} \eta _1[c] \>  \right ]    + \dot c   \left [ \< \xi _2[c] , \zeta_{B} \eta _1[c] \> - \<  v , \zeta_{B} \partial _c\eta _1[c] \>  \right ] \label{eq:seceq}\\& =   \<  v , \zeta_{B} \eta _2[c] \> +   \<  v ,[ \mathcal{L}_{[c]}^* , \zeta_{B}] \eta _1[c] \> + \<   f( \phi  _{c} + v  )  -  f( \phi  _{c} )-  f'( \phi  _{c} ) v   ,\( \zeta_{B} \eta _1[c]\) ' \> \\&=: A _{0 }+ A _{1 }+A _{2 }\nonumber
\end{align}
where we also used  the following, due to \eqref{61},
\begin{align*}
      -\<\dot v , \zeta_{B} \eta _1[c] \> = \dot c    \<  v , \zeta_{B} \partial _c\eta _1[c] \> .
\end{align*}
By \eqref{61} we have
\begin{align*}
  A_0 = \<  v , \eta _2[c] \> -  \<  v , (1- \zeta_{B}) \eta _2[c] \> =-  \<  v , (1- \zeta_{B}) \eta _2[c] \>
\end{align*}
so that  by    the definition of   $ \eta _2[c]=\theta _3 \phi _c $ in \eqref {eq:ker2}   and using the partition of unity $1=\vartheta _{1A_1} + \vartheta _{2A_1}$   we have \small
\begin{align*}
  |A_0 |&\le  \< \frac{\zeta _B}{\zeta _A}\zeta _A (\vartheta _{1A_1} + \vartheta _{2A_1})|v|,        (\zeta _B ^{-1}- 1)  \eta _2[c] \>   \le   (  \| v  \|_{  { \Sigma }_{1A A_1}}+ \| v  \|_{  { \Sigma }_{2A A_1}})  \left   \|     \( e ^{ \frac{|x|}{B}(1-\chi )  }  -1\) \phi _c \right \| _{L^2}\\&  \le    (  \| v  \|_{  { \Sigma }_{1A A_1}}+ \| v  \|_{  { \Sigma }_{2A A_1}})   \left  \|   \frac{|x|}{B}(1-\chi )     \zeta _B ^{-1}  \phi _c \right \| _{L^2}\lesssim B ^{-1}  (  \| v  \|_{  { \Sigma }_{1A A_1}}+ \| v  \|_{  { \Sigma }_{2A A_1}})   .
\end{align*}\normalsize
With an elementary computation, we have
\begin{align*}
  [ \mathcal{L}_{[c]}^* , \zeta_{B}] &= -  L_{+c}\zeta_{B}'  -[ L_{+c}   , \zeta_{B}] \partial _x =
  \zeta_{B}'(\partial _x^2  -c + f'( \phi _c)) )
 +[ \partial _x^2    , \zeta_{B} '] +[ \partial _x^2     , \zeta_{B}  ]  \partial _x \\& =  \zeta_{B}'\( \partial ^2_x + f' ( \phi  _{c} ) \) -c\zeta_{B}' +  2  \zeta_{B} '' \partial _x + \zeta_{B} '''+( 2  \zeta_{B}'   \partial _x + \zeta_{B} '' ) \partial _x .
\end{align*}
Then we write
\begin{align*}
  A _{1 }&= \<  v ,  \zeta_{B} ' \( \partial ^2_x + f' ( \phi  _{c} ) \)   \eta _1[c] +\( 2  \zeta_{B}'   \partial _x   + 3 \zeta_{B} ''    \)     \eta _1'[c]\>
\\& +  \<  v ,   \zeta_{B} '''      \eta _1 [c]\>-c \<  v ,  \zeta_{B} '    \eta _1[c] \>  =:  A _{1 1}+ A _{1 2} +  A _{1 3}.
\end{align*}
Then,  using again   $1=\vartheta _{1A_1} + \vartheta _{2A_1}$,    we have \small
\begin{align*}
  | A _{1 1}|&\lesssim  B ^{-1}  \sum _{i=1,2}\|    \zeta _A  \vartheta _{iA_1} v \| _{L^2}  \(  \|   \zeta _A ^{-1} \( \partial ^2_x + f' ( \phi  _{c} ) \)   \eta _j[c] \| _{L^2}    + B \|   \zeta _A ^{-1}  \( 2  \zeta_{B}'   \partial _x   + 3 \zeta_{B} ''    \)     \eta _j'[c] \| _{L^2}  \) \\&  \lesssim        B ^{-1}                (  \| v  \|_{  { \Sigma }_{1A A_1}}+ \| v  \|_{  { \Sigma }_{2A A_1}}) .
\end{align*}\normalsize
By $ |\zeta_{B} '''|\lesssim B ^{-3}\zeta_{B}\lesssim B ^{-3}\zeta_{2B}^{2}$
  we have
\begin{align*}
  | A _{1 2}|\lesssim  B ^{-3} \|  \zeta_{2B}  \| _{L^2}  \sum _{i=1,2} \left  \| \frac{ \zeta_{2B} }{\zeta _A } \zeta _A \vartheta _{iA_1} v \right \| _{L^2}   \lesssim     B ^{- \frac{5}{2}}  (  \| v  \|_{  { \Sigma }_{1A A_1}}+ \| v  \|_{  { \Sigma }_{2A A_1}})     .
\end{align*}
Next we focus on  $A _{13}$.
From \eqref{def:zetaC} we have
\begin{align*}
  \<  v ,  \zeta_{B} '    \eta _1[c] \> &= -\frac{1}{B}   \<  v ,  \zeta_{B}   \frac{x}{|x|}    \eta _1[c] \>  +\frac{1}{B}  \<  v ,  \zeta_{B} \(  \frac{x}{|x|}  \chi + |x| \chi '   \)   \eta _1[c] \> .
   \end{align*}
  Then by $\supp \chi \subseteq [-2,2] $ we have like above
 \begin{align*}
   \left |B ^{-1}  \<  v ,  \zeta_{B} \(  \frac{x}{|x|}  \chi + |x| \chi '   \)   \eta _1[c] \> \right |  \lesssim        B ^{-1}               \| v  \|_{  { \Sigma }_{1A A_1}}
 \end{align*}
   while we write
 \begin{align*}
   \frac{1}{B}   \<  v ,  \zeta_{B}   \frac{x}{|x|}    \eta _1[c] \>  = -    \frac{2}{B}   \<  v ,  \zeta_{B}        \eta _1[c] \> _{L^2(\R _-)} + \cancel {B ^{-1}  \<  v ,  \zeta_{B}      \eta _1[c] \> }
 \end{align*}
where the canceled term is null by \eqref{61}. Since $\eta _1[c]\xrightarrow{x\to -\infty}0$ exponentially fast, then
\begin{align*}
   |B ^{-1} \<  v ,  \zeta_{B}        \eta _1[c] \> _{L^2(\R _-)} |  &\lesssim      B ^{-1}  \( \| v  \|_{  { \Sigma }_{1A A_1}} +\| v  \|_{  { \Sigma }_{2A A_1}} \)    \|   \zeta _A ^{-1}    \zeta_{B}       \eta _1[c]  \| _{L^2(\R _-)}   \\&\lesssim    B ^{-1}                \( \| v  \|_{  { \Sigma }_{1A A_1}} +\| v  \|_{  { \Sigma }_{2A A_1}} \).
 \end{align*}
Hence we have proved
\begin{align*}
  | A _{13}|\lesssim   B ^{-1}               \( \| v  \|_{  { \Sigma }_{1A A_1}} +\| v  \|_{  { \Sigma }_{2A A_1}} \) .
\end{align*}
Turning to $A _{ 2}$, for  $s\in (0,1)$  we set
\begin{align}&
   \Omega_{1,t ,s } :=\{x\in \R\ |\  2| s v(t,x)| \le  \phi   _{c (t)}(x) \}  \text{ and }\label{eq:partition}\\&  \Omega_{2,t,s } :=\R\setminus \Omega_{1,t ,s }  =\{x\in \R\ |\ 2| s v(t,x)|> \phi   _{c (t)}(x)\} .\nonumber
\end{align}
Then we have
\begin{align*}
   A _{ 2 }& = \sum _{i=1,2}\int _{0}^{1} ds \int _{\Omega_{i,t,s }} \(    f'( \phi  _{c} + sv  ) -  f'( \phi  _{c} )   \) v\( \zeta_{B} \eta _1[c]\) ' dx  =:  \sum _{i=1,2}A _{2 i}  .
\end{align*}
We have
 \begin{align*}
   |A _{2 1}|\lesssim \int _0 ^1 ds   \int _{\Omega_{1,t ,s }}\phi _c ^{p-1} |v |   \left |\zeta_{B}' \eta _1[c]\right | dx  +  \int _0 ^1 ds\int _{\Omega_{1,t ,s }}\phi _c ^{p-2}s| v|^2   \left |\zeta_{B} \eta _1'[c]\right | dx  =:  \sum _{i=1,2}A _{2 1i}
 \end{align*}
Then from $|\zeta_{B}'|\lesssim B ^{-1}\zeta_{B}$,  $\|\eta _1[c]\| _{L^\infty (\R )}\lesssim 1$  and $\|\phi _c ^{p-1}     \zeta_{B} \zeta_{A}   ^{-1}\| _{L^2 (\R )}\lesssim 1$, we have
\begin{align*}
  A _{211}\lesssim B ^{-1} \int _{\R}\phi _c ^{p-1}    |\eta _1[c]| \zeta_{B} \zeta_{A}   ^{-1}       \zeta_{A}     \(  \vartheta _{1A_1}  + \vartheta _{2A_1}   \)  |v|   dx \lesssim  B ^{-1} (  \| v  \|_{  { \Sigma }_{1A A_1}} + \| v  \|_{  { \Sigma }_{2A A_1}}  )
\end{align*}
and  from $\|\eta _1'[c]\| _{L^2 (\R )}\lesssim 1$  and, in $\Omega_{1,t ,s }$, from $\phi _c ^{p-2}|s v|=\phi _c ^{p-2}| sv|^{\frac{3-p}{2}+ \frac{p-1}{2}}\le \phi _c ^{\frac{p-1}{2}}|s v|^{  \frac{p-1}{2}}$, we have
\begin{align*}
  A _{212}&\lesssim \int _{\R}  \phi _c ^{\frac{p-1}{2}}  |v | ^{ \frac{p-1}{2} +1}  \zeta_{B} \( |\partial _c \phi _c| +   |  \phi _c'| \) dx  \\ &\lesssim  \| v \| _{L^\infty (\R )} ^{\frac{p-1}{2}}  \int _{\R}     \phi _c ^{\frac{p-1}{2}}  \( |\partial _c \phi _c| +   |  \phi _c'| \)    \zeta_{B} \zeta_{A}   ^{-1}       \zeta_{A}     \(  \vartheta _{1A_1}  + \vartheta _{2A_1}   \)  |v|  dx \\&  \lesssim B ^{\frac{p-1}{4}} \delta ^{\frac{p-1}{2}} (  \| v  \|_{  { \Sigma }_{1A A_1}} + \| v  \|_{  { \Sigma }_{2A A_1}}  ) \end{align*}
  where we used    $\|v\| _{L^\infty (\R )}\lesssim B ^{\frac{1}2}  \delta$   from
  \eqref{eq:estvH1}.
Summing up, we conclude that
\begin{align*}
   |A _{21}|\lesssim       B ^{-1} (  \| v  \|_{  { \Sigma }_{1A A_1}} + \| v  \|_{  { \Sigma }_{2A A_1}}  )   .
 \end{align*}
 Next we turn to
\begin{align*}
   |A _{22}|&\lesssim  \int _{\R}|v| ^{p }   |\zeta_{B}' \eta _1[c] |   dx + \int _{\R}|v| ^{p }  |\zeta_{B}  \eta _1' [c] | dx
  =: A _{221}+A _{222}.
 \end{align*}
Then, using again $\| v  \|_{ L^\infty (\R) } \lesssim B ^{1/2}\delta $, we have
\begin{align*}
   A _{222}\lesssim  B ^{\frac{p-1}{2}}\delta ^{p-1} (  \| v  \|_{  { \Sigma }_{1A A_1}} + \| v  \|_{  { \Sigma }_{2A A_1}}  )
\end{align*}
and
\begin{align*}
 A _{221}&\lesssim   \delta ^{p-1} B ^{\frac{p-1}{2}-1}    \|    \zeta_{A}^{-1}  \zeta_{B}\| _{L^2}   \|     \zeta_{A} \(  \vartheta _{1A_1}  + \vartheta _{2A_1}   \)v\| _{L^2}
 \\&
 \lesssim \delta ^{p-1} B ^{\frac{p-1}{2}-\frac{1}{2}}(  \| v  \|_{  { \Sigma }_{1A A_1}} + \| v  \|_{  { \Sigma }_{2A A_1}}  ).
\end{align*}
Collecting the estimates, taking $\delta$ sufficiently small so that $B^{\frac{p-1}{4}}\delta^{\frac{p-1}{2}}\leq B^{-1}$, we have
\begin{align*}
    |A_1+A_2+A_3|\lesssim B^{-1}\left(\|v\|_{\Sigma_{1AA_1}}+\|v\|_{\Sigma_{1AA_1}}\right).
\end{align*}
Therefore, since
\begin{align*}
    |\< \xi _1[c] , \zeta_{B} \eta _1[c] \>-1|+|\<\xi_2[c],\zeta_B\eta_1\>|\lesssim B^{-1}
\end{align*}
and
\begin{equation}
  \label{eq:eqmaincoff1}
  \begin{aligned}
    \< v' , \zeta_{B} \eta _1[c] \> &=-  \< v  , \zeta_{B}'  \eta _1[c] \>-   \< v  , \zeta_{B}   \( \eta _1[c] \)' \> =O\(  \| v \| _{L^2}    \)=O\(  B ^{\frac{1}{2}} \delta    \) \text{ and}
\\
   \<  v , \zeta_{B} \partial _c\eta _1[c] \> &= O\( B ^{\frac{1}{2}} \| v \| _{L^2}    \) =  O\(  B   \delta    \) ,
  \end{aligned}
\end{equation}
we have
\begin{align}\label{est:dotD-c}
    |\dot D-c|\lesssim B^{-1}|\dot c| + B^{-1}\left(\|v\|_{\Sigma_{1AA_1}}+\|v\|_{\Sigma_{1AA_1}}\right).
\end{align}
 Next, we  apply  to equation \eqref{eq:gKdV3} the inner product $\< \cdot ,   \eta _2[c] \>$, obtaining
 \begin{align}&
    ( \dot D -c ) \< v  ,   \eta _2[c]'  \>      + \dot c   \left [ 1 - \<  v ,   \partial _c\eta _2[c] \>  \right ] \label{eq:seceq}\\& =   \<   f( \phi  _{c} + v  )  -  f( \phi  _{c} )-  f'( \phi  _{c} ) v   ,  \(\eta _2[c] \) ' \>  =:  \widetilde{A} _{2 }.\nonumber
\end{align}
Proceeding as above,  using \eqref{eq:estvH1}, we have $|\widetilde{A} _{2 }|\lesssim  B^{-1}\left(\| v  \|_{  { \Sigma }_{1A A_1}} + \| v  \|_{  { \Sigma }_{2A A_1}}\right)$, where we have used $B^{1/2}\delta\leq B^{-1}$.
Thus, we have
\begin{align}\label{est:dotc}
    |\dot c|\lesssim B^{1/2}\delta |\dot D-c|+B^{-1}\left(\| v  \|_{  { \Sigma }_{1A A_1}} + \| v  \|_{  { \Sigma }_{2A A_1}}\right).
\end{align}
Combining this with \eqref{est:dotD-c} and by bootstrap we obtain \eqref{eq:discrest1stat} .
\qed

\section{The virial inequality: proof of Lemma \ref{prop:1virial} }\label{sec:virial}

Using the notation in \eqref{def:zetaC}  we set
\begin{align}\label{def:funct:vir}
\mathcal{I}_{i} := 2^{-1}\<  v  , \varphi _{iAA_1} v  \>    \text{  for $i=1,2$ }.
\end{align}
Let $\widetilde{\chi}\in C^\infty _c(\R , [0,1])$  be $\widetilde{\chi}\ge 1_{[1,2]}$ with $\supp \widetilde{\chi}= [1/2,4]$. Then  in analogy to \eqref{def:phiAi}   consider 
\begin{align}    \widetilde{\varphi} _{nA_1}  (x):=\int_0^x \widetilde{\chi} _{2^{n-1}A_1}(y)\,dy  \text{   for any  $n\in \N $ and let }  \widetilde{\mathcal{I}}_{n} := 2^{-1}\<  v  , \widetilde{\varphi} _{nA_1} v  \> .\label{eq:tildephin}
\end{align}
Finally we consider
\begin{align*}
\mathbf{I}_n:=  \sum _{l=1}^{n-1}A_1 ^{-2l}   {\widetilde{\mathcal{I}}}_{l}.
\end{align*}
 Notice that in \eqref{eq:lem:1stV1} below we can replace 20 and 100 by arbitrarily large numbers.

\begin{lemma}\label{lem:1stV1}
There exists a fixed constant $C>0$ s.t.
\begin{align}
 \| v \| _{  { \Sigma }_{1A A_1}    }^2& \lesssim      {C}\left [  \dot{\mathcal{I}}_1 + \dot{\mathbf{I}} _{20} + |\dot D -c|^2+    |\dot  c|^2 + \|    v\| _{\widetilde{\Sigma}} ^2  + A_1 ^{-20}  \| v \| _{  { \Sigma }_{2A A_1}    }^2 \right ] \text{ and}    \label{eq:lem:1stV1} \\  \nonumber \| v \| _{  { \Sigma }_{2A A_1}    }^2& \lesssim      {C}\left [  \dot{\mathcal{I}}_2 + \dot{\mathbf{I}} _{20}+ A_1 ^{-100} \|    v\| _{\widetilde{\Sigma}} ^2    +A_1 ^{-20}  \| v \| _{  { \Sigma }_{1A A_1}    }^2 \right ]   .
\end{align}

\end{lemma}
\begin{proof}
Like in \S \ref{sec:mod}    we have $t\to \mathcal{I}_{i}$ absolutely continuous with \small
\begin{align} \nonumber
  \dot {\mathcal{I}}_{i} &= \frac{d}{dt} 2^{-1}\<  e^{-t\partial _x ^3}\widetilde{{v}}    , \varphi _{iAA_1} e^{-t\partial _x ^3}\widetilde{{v}}   \> = 2^{-1}\<   {{v}}    ,[ \partial _x ^3 , \varphi _{iAA_1}]  v   \> +  \<  e^{-t\partial _x ^3}
\dot {\widetilde{{v}}}    , \varphi _{iAA_1} e^{-t\partial _x ^3}\widetilde{{v}}   \> \\&  \nonumber= \< v , \(  2^{-1} [ \partial _x ^3 , \varphi _{iAA_1}]  +c  \varphi _{iAA_1}  \partial _x    \)  v   \> - \< \( f( \phi  _{c} + v  )  -  f( \phi  _{c} ) \) ' , \varphi _{iAA_1} v  \>
  \\&   +(\dot D -c )\< v' ,  \varphi _{iAA_1} v  \>+(\dot D -c )\< \xi _1[c] ,  \varphi _{iAA_1} v  \> -\dot c \< \xi _2[c] , \varphi _{iAA_1} v  \>
=: \sum _{k=1}^{5}
B_k^{(i)} .\label{eq:vir2}
\end{align} \normalsize
Only for this proof, we   set $\omega ^{(i)}:= \zeta_A  \vartheta _{iA_1}v$.
We have
\begin{align}\label{eq:B1_decomposition}
   B_1^{(i)} &=  \< v'' , \zeta_A ^2\vartheta _{iAA_1}^2 v \>  -\frac{1}{2} \< \zeta_A\vartheta _{i A_1} v'  , \zeta_A \vartheta _{i A_1} v '\> -\frac{c}{2}  \| \omega ^{(i)}\| _{L^2}^{2}  ,
\end{align}
with \small
\begin{equation*}
    \< v'' , \zeta_A^2\vartheta _{i A_1}^2 v \>  -\frac{1}{2} \< \zeta_A  \vartheta _{i A_1}v'  , \zeta_A \vartheta _{i A_1} v '\> = \frac{1}{2}   \<  (\zeta_A^2\vartheta _{i A_1}^2)'',        v ^2 \> - \frac{3}{2} \< \zeta_A \vartheta _{i A_1} v'  , \zeta_A \vartheta _{i A_1} v '\>
\end{equation*}\normalsize
where  \small
\begin{align*}&
  \< \zeta_A  \vartheta _{i A_1} v'  , \zeta_A  \vartheta _{i A_1} v '\>  =  \<  (\zeta_A  \vartheta _{i A_1} v)'   - (\zeta_A \vartheta _{i A_1}) ' v  ,  (\zeta_A \vartheta _{i A_1}  v)'   -(\zeta_A \vartheta _{i A_1}) '  v\>\\&  =   \|  (\omega ^{(i)} )'\| _{L^2}^{2}- 2 \<     (\zeta_A \vartheta _{i A_1}   )' v  ,     (\zeta_A \vartheta _{i A_1}  )' v+    \zeta_A \vartheta _{i A_1}  v ' \> + \<  (\zeta_A \vartheta _{i A_1}  )'^2,v^2\>
\\& =  \|(\omega ^{(i)} )'\| _{L^2}^{2}-  \< ( (\zeta_A \vartheta _{i A_1}  ) ')^2,v^2\> +\< \left ((\zeta_A \vartheta _{i A_1}  ) '  \zeta_A \vartheta _{i A_1} \right ) ' ,v^2\>  \\& = \| (\omega ^{(i)} )'\| _{L^2}^{2}+ \<   (\zeta_A \vartheta _{i A_1}  ) ''\zeta_A \vartheta _{i A_1}  ,v^2\> .
\end{align*}\normalsize
Then we conclude the following, \small
\begin{align*}
   B_1^{(i)} &= -  \frac{3}{2} \| (\omega ^{(i)} )'\| _{L^2}^{2}  -\frac{c}{2}  \|\omega ^{(i)} \| _{L^2}^{2}+   B _{11}^{(i)} \text{ with }
B _{11}^{(i)}:=
\< ((\zeta_A \vartheta _{i A_1}  ) ')^2-\frac{1}2 (\zeta_A \vartheta _{i A_1}  ) '' \zeta_A \vartheta _{i A_1} ,v^2 \> .
\end{align*} \normalsize
We have
\begin{align*}
   B _{11}^{(i)}&= \<   ( \zeta_A     ')^2-\frac{1}2  \zeta_A   \zeta_A '') \vartheta _{i A_1}     ^2 ,   v^2 \> +  \<    \zeta_A   '  \vartheta _{i A_1}  '  v,\zeta_A \vartheta _{i A_1}   v  \>
\\& +
 \<   ( \vartheta _{i A_1}    ')^2-\frac{1}2   \vartheta _{i A_1}    '' \vartheta _{i A_1}  ,  \zeta_A ^2v^2 \> =:\sum _{k=1}^{3} B _{11k}^{(i)}.
\end{align*}
From $|\zeta_A ^{(k)}|\lesssim A ^{-k}\zeta_A $ we have
\begin{align*}
  | B _{111}^{(i)}|\lesssim A ^{-2}  \|\omega ^{(i)} \| _{L^2}^{2}
\end{align*}
while
\begin{align}\label{eq:B1-1}
  | B _{112}^{(i)}   +  B _{113}^{(i)}  |\lesssim A ^{-1}  \(  \|\omega ^{(1)} \| _{L^2}^{2} +\|\omega ^{(2)} \| _{L^2}^{2} \)      + A _1 ^{-2}\| v \| ^{2}_{L^2(A_1,2A_1)}.
\end{align}
\begin{claim}\label{claim:ineqB1}
For any $n  \le 20$    we have
\begin{align}
  \label{eq:B1}\| v \| ^{2}_{L^2(A_1,2A_1)} \lesssim \dot {\mathbf{I}}_n + A _1 ^{-2n} \(  \| v  \|_{  { \Sigma }_{1A A_1}}^2 + \| v  \|_{  { \Sigma }_{2A A_1}} ^2\) +A_1 ^{-100}\| v  \|_{  { \widetilde{\Sigma} } } ^2  .
\end{align}
\end{claim}
We assume for the moment Claim \ref {claim:ineqB1} and  discuss the rest of the proof of Lemma \ref{lem:1stV1}.
Starting from  $B_2 ^{(i)}$, we claim that
for any fixed small constant $\varepsilon _1>0$
\begin{align}
  \label{eq:B2i1}& \left  | B_2^{(1)}\right | \lesssim  \varepsilon _1  \| v  \|_{  { \Sigma }_{1A A_1}} ^2+A_1 ^{-100} \| v  \|_{  { \Sigma }_{2A A_1}}^2 + \varepsilon _1^{-1} \| v\| _{\widetilde{\Sigma}}^2 \text{  and  }\\&  \label{eq:B2i2}  \left  | B_2^{(2)}\right | \lesssim   A _1 ^{-100}\( \| v\| _{\widetilde{\Sigma}}^2   + \| v  \|_{  { \Sigma }_{1A A_1}} ^2+ \| v  \|_{  { \Sigma }_{2A A_1}} ^2 \)  .
\end{align}
To prove \eqref{eq:B2i1} and \eqref{eq:B2i2}   we    write
\begin{align*}
 B_2 ^{(i)}=-\< \( f( \phi  _{c} + v  )  -  f( \phi  _{c} )-f(v) \) ' ,\varphi _{iAA_1} v  \> - \< \(  f(v) \) ' ,\varphi _{iAA_1} v  \>=: B_{21}^{(i)}+B_{22}^{(i)} .
\end{align*}
From
\begin{align*}
 B_{22} ^{(i)} &= -\< \(  f(v) \) ' , \varphi _{iAA_1}v  \>  = \<    f(v)v ' , \varphi _{iAA_1}    \> +  \<    f(v)v   ,\zeta_A ^2 \vartheta _{i A_1}^2  \> = - \<  F(v)-  f(v)v , \zeta_A ^2 \vartheta _{i A_1}^2    \>
\end{align*}
 using \eqref{eq:estvH1}  we obtain
\begin{align}
  \label{eq:nonlinv}
  |B_{22} ^{(i)}|\lesssim   B ^{\frac{p-1}{2}} \delta ^{p-1}  \| v  \|_{  { \Sigma }_{iA A_1}}^2.
\end{align}
 Setting
\begin{align}&
   \Omega_{1,t  } :=\{x\in \R\ |\  2|  v(t,x)| \le  \phi   _{c (t)}(x) \}  \text{ and }\label{eq:partition2}\\&  \Omega_{2,t  } :=\R\setminus \Omega_{1,t  }  =\{x\in \R\ |\ 2|   v(t,x)|> \phi   _{c (t)}(x)\}  \nonumber
\end{align}
we split
\begin{align*}
  B_{21}^{(i)}= \sum _{k=1,2}  \<   f( \phi  _{c} + v  )  -  f( \phi  _{c} )-f(v)   , ( \varphi _{iAA_1} v)'  \>_{L^2(\Omega_{k,t  })}=: \sum _{k=1,2} B_{21k}^{(i)} .
\end{align*}
 We  further split
\begin{align*}
  B_{212}^{(i)}&=- \<    f( \phi  _{c} )    , ( \varphi _{iAA_1} v)'  \>_{L^2(\Omega_{2,t  })}+ \int _0^1 \<    f'( v+s\phi  _{c} ) \phi  _{c}   , ( \varphi _{iAA_1} v)'  \>_{L^2(\Omega_{2,t  })}   ds=: B_{2121}^{(i)}+B_{2122}^{(i)}.
\end{align*}
By  $\phi  _{c}^{p}\lesssim \phi  _{c}^{\frac{p-1}{2} } |v|^{\frac{p-1}{2}} |v|$  in $\Omega_{2,t  }$  and by \eqref{eq:estvH1},
 we have
\begin{align*}
   |B_{2121}^{(i)}|&\lesssim  \int_{\R}\phi  _{c}^{\frac{p-1}{2} }|v|^{\frac{p-1 }{2}+1}\zeta_{A} \zeta_{A}^{-1}  \left |   \varphi _{iAA_1} \zeta_{A}^{-1} (\zeta_{A}v) '  +\zeta_{A}v(\varphi _{iAA_1} \zeta_{A}^{-1})'   \right | dx \\&\lesssim B ^{\frac{p-1}{4}}\delta ^{\frac{p-1 }{2} }  \| \zeta_{A}v\| _{H^1}^2 \lesssim B ^{\frac{p-1}{4}}\delta ^{\frac{p-1 }{2} }(  \| v  \|_{  { \Sigma }_{1A A_1}} ^2+ \| v  \|_{  { \Sigma }_{2A A_1}}^2  ) .
\end{align*}
By  $\phi  _{c}\lesssim  |v|^{1 -\frac{p-1}{2} }  \phi  _{c}^{\frac{p-1}{2}} $    in $\Omega_{2,t  }$  and by $1= \vartheta _{1A_1} + \vartheta _{2A_1} $ ,  we get the following,
\begin{align*}
   |B_{2122}^{(i)}|&\lesssim  \int_{\R} |v| ^{p-1}\phi  _{c}\left |  \varphi _{iAA_1} \zeta_{A}^{-1} (\zeta_{A}v) '  +\zeta_{A}v(\varphi _{iAA_1} \zeta_{A}^{-1})'   \right | dx\\& \lesssim B ^{\frac{p-1}{4}} \delta ^{\frac{p-1 }{2} }  \int_{\R} |v| \zeta_{A} \zeta_{A}^{-1}  \phi  _{c}^{\frac{p-1}{2}} \left |   \varphi _{iAA_1} \zeta_{A}^{-1}  (\zeta_{A}v) '  +\zeta_{A}v(\varphi _{iAA_1} \zeta_{A}^{-1})'   \right | dx
  \\&  \lesssim  B ^{\frac{p-1}{4}} \delta ^{\frac{p-1 }{2} }  \| ( \vartheta _{1A_1} + \vartheta _{2A_1} )\zeta_{A}v\| _{H^1}^2\lesssim B ^{\frac{p-1}{4}} \delta ^{\frac{p-1 }{2} }  (  \| v  \|_{  { \Sigma }_{1A A_1}} ^2+ \| v  \|_{  { \Sigma }_{2A A_1}}^2  ),
\end{align*}
where we used   \eqref{eq:estvH1}.
 Next we split
\begin{align*}
   B_{211}^{(i)}&=\<    f'( \phi  _{c}   )    v   , (\varphi _{iAA_1} v)'  \>_{L^2(\Omega_{1,t  })}     +\int _{0}^{1} \<  \left [ f'( \phi  _{c} + sv  ) - f'( \phi  _{c}   ) \right ]     v   , (\varphi _{iAA_1} v)')  \>_{L^2(\Omega_{1,t  })} ds
  \\& - \int _{0}^{1} \< f'(sv)v   , (\varphi _{iAA_1} v)'  \>_{L^2(\Omega_{1,t  })} ds =: \sum _{k=1}^{3}B_{211k}^{(i)}.
\end{align*}
We have  \small
\begin{align}\nonumber
  |B_{2113}^{(i)}|&\lesssim \<  |v|^p   , |\varphi _{iAA_1} v '  + \zeta _A ^2 \vartheta _{i  A_1 } ^2 v| \>_{L^2(\Omega_{1,t  })}
   \lesssim  \< \phi _c ^{\frac{p-1}{2}} |v| ^{\frac{p-1}{2}}    |v|   , |  \zeta_A ^{-1}\varphi _{iAA_1}  \zeta_A v '  + \zeta _A ^2 \vartheta _{i  A_1} ^2 v| \>_{L^2(\Omega_{1,t  })} \\& \lesssim  \|v\| _{L^\infty (\R )}^{\frac{p-1}{2}} \|  \zeta_A v\| _{H^1} ^2 \lesssim  B ^{\frac{p-1}{2}}\delta  ^{\frac{p-1}{2}}   (  \| v  \|_{  { \Sigma }_{1A A_1}} ^2+ \| v  \|_{  { \Sigma }_{2A A1}}^2  ).\label{eq:B2113}
\end{align}
\normalsize
By concavity  $|f'( \phi  _{c} + sv  ) - f'( \phi  _{c}   )|\le |f'(   sv  )|$  (in the proof    $1<p\le 2$, Remark \ref{rem:udermain}) and as in \eqref{eq:B2113}
\begin{align*}
  |B_{2112}^{(i)}|&\lesssim  B ^{\frac{p-1}{2}}\delta  ^{\frac{p-1}{2}}   (  \| v  \|_{  { \Sigma }_{1A A_1}} ^2+ \| v  \|_{  { \Sigma }_{2A A_1}}^2  ).
\end{align*}
With $B_{2111}^{(i)}$  we make a clear distinction between the two cases $i=1,2$. For $i=1$, for a preassigned  fixed small constant $\varepsilon _1>0$, we have the following, which completes the proof of \eqref{eq:B2i1},
\begin{align}\label{eq:B2for1}
  | B_{2111}^{(1)}|&= \left |\<    f'( \phi  _{c}   )    v   ,  (\vartheta  _{1 A_1}   + \vartheta  _{2 A_1}   )    \varphi _{1AA_1} v '   +  \zeta_A^2\vartheta _{1A_1}^2 v \>_{L^2(\Omega_{1,t  })}\right |\\& \lesssim   \varepsilon _1 \| v  \|_{  { \Sigma }_{1A A_1}}^2  +A_1 ^{-100} \| v  \|_{  { \Sigma }_{2A A_1}}^2 + \varepsilon _1^{-1} \| v\| _{\widetilde{\Sigma}}^2 \nonumber
\end{align}
    where we exploit  the exponential decay at infinity of $f'( \phi  _{c}   )$ and
\begin{align}
  \label{eq:suppvarphi1}\supp  \varphi _{2AA_1}\subseteq \supp  \vartheta _{2A_1}\subseteq [A_1,+\infty ).
\end{align}
 In the $i=2$ case once again  by \eqref{eq:suppvarphi1}  we obtain the following, which yields \eqref{eq:B2i2},
\begin{align}\label{eq:B2for2}
  | B_{2111}^{(2)}|&=\left  |\<    f'( \phi  _{c}   )    v   ,  (\vartheta  _{1 A_1}   + \vartheta  _{2 A_1}   ) \varphi _{2AA_1} v '   +  \zeta_A^2\vartheta _{2A_1}^2 v \>_{L^2(\Omega_{1,t  } \cap \{ x\ge A_1    \}     )}\right | \\& \lesssim  A _1 ^{-100} \( \| v\| _{\widetilde{\Sigma}}^2 +  \| v  \|_{  { \Sigma }_{1A A_1}}^2  +\| v  \|_{  { \Sigma }_{2A A_1}}^2\) .\nonumber
\end{align}
  We examine  the remaining terms in \eqref{eq:vir2}. By   \eqref{eq:estvH1}  and \eqref{eq:discrest1stat}   we have
\begin{align*}
  |B_3^{(i)}|&=  2 ^{-1}|(\dot D -c ) |   \| v  \|_{  { \Sigma }_{iA A_1}}^2 \lesssim \delta   \| v  \|_{  { \Sigma }_{iA A_1}}^2 .
\end{align*}
Finally, exploiting \eqref{eq:suppvarphi1} and the exponential decay  at infinity of $ \xi _k [c]$  for $k=1,2$,  we have
\begin{align*}&
   |B_4^{(i)}|+ |B_5^{(i)}|\lesssim     A _1 ^{ -100 (i-1)}\( |\dot D -c|^2+    |\dot  c|^2+  \| v \| _{ \widetilde{{\Sigma}} } ^2 \).
\end{align*}

\bigskip

\textit{ Proof of Claim \ref{claim:ineqB1}.}
Like in \eqref{eq:vir2},  \begin{align} \nonumber
  \dot {\widetilde{\mathcal{I}}}_{n} &   = \< v , \(  2^{-1} [ \partial _x ^3 , \widetilde{\varphi} _{nA_1} ]  +c  \widetilde{\varphi} _{nA_1}  \partial _x    \)  v   \> - \< \( f( \phi  _{c} + v  )  -  f( \phi  _{c} ) \) ' , \widetilde{\varphi} _{nA_1}  v  \>
  \\&   +(\dot D -c )\< v' ,  \widetilde{\varphi} _{nA_1}  v  \>+(\dot D -c )\< \xi _1[c] ,  \widetilde{\varphi} _{nA_1}  v  \> -\dot c \< \xi _2[c] , \widetilde{\varphi} _{nA_1}  v  \>
=: \sum _{k=1}^{5}
\widetilde{B}_k^{(n)} .\label{eq:vir2tild}
\end{align} \normalsize
Mimicking the previous computations we have 
\begin{align*}
  \widetilde{B}_1^{(n)}&= -  \frac{3}{2}\< \widetilde{\chi} _{2^{n-1}A_1} v', v' \> - \frac{c}{2}\< \widetilde{\chi} _{2^{n-1}A_1} v , v  \> +\frac{1}{2}\< \widetilde{\chi} _{2^{n-1}A_1}'' v , v  \>
\end{align*}
where
\begin{align}\label{eq:coupl1}
  |\< \widetilde{\chi} _{2^{n-1}A_1}'' v , v  \>|\lesssim A_1 ^{-2}\< \widetilde{\chi} _{2^{n }A_1}  v , v  \> .
\end{align}
 Since $\supp \widetilde{\varphi} _{nA_1}\subseteq [2 ^{-n}A_1,+\infty )$ like   for $ {\mathcal{I}}_{2}$ it is easy to conclude that
 \begin{align*}
   | \sum _{k=2}^{5}
\widetilde{B}_k^{(n)}|\lesssim  A _1 ^{-100} \( \| v\| _{\widetilde{\Sigma}}^2 +  \| v  \|_{  { \Sigma }_{1A A_1}}^2  +\| v  \|_{  { \Sigma }_{2A A_1}}^2+ |\dot D -c|^2+    |\dot  c|^2\) .
 \end{align*}
 Bounding similarly $\< \widetilde{\chi} _{2^{n }A_1}  v , v  \>$,   iterating the estimates    and by   $|\log \delta |\gg A\gg A_1 ^{100}$  we conclude that
 \begin{align*}
   \| v \| ^{2}_{L^2(A_1,2A_1)}&\lesssim  \sum _{l=1}^{n-1}A_1 ^{-2l} \dot  {\widetilde{\mathcal{I}}}_{l} + A _1 ^{-2n}\< \widetilde{\chi} _{2^{n }A_1}  v , v  \> \\& +A _1 ^{-100} \( \| v\| _{\widetilde{\Sigma}}^2 +  \| v  \|_{  { \Sigma }_{1A A_1}}^2  +\| v  \|_{  { \Sigma }_{2A A_1}}^2+ |\dot D -c|^2+    |\dot  c|^2\) .
 \end{align*}
 From the following and \eqref{est:dotD-c}  and \eqref{est:dotc} we   complete the proof of Claim \ref{claim:ineqB1},
\begin{align*}
 \< \widetilde{\chi} _{2^{n }A_1}  v , v  \> \lesssim \| v  \|_{  { \Sigma }_{1A A_1}} ^2+ \| v  \|_{  { \Sigma }_{2A A_1}}^2 \text{ for any }n\ge 0.
\end{align*}
\qed

 Bootstrapping
  gives the following and  with  \eqref{eq:discrest1stat} yields  \eqref{eq:lem:1stV1}   proving Lemma \ref{lem:1stV1},
\begin{align*}
   \| v \| _{  { \Sigma }_{1A A_1}    }^2& \lesssim     \widetilde{C}\left [  \dot{\mathcal{I}}_1+A_1 ^{-2}\dot {\mathbf{I}} _{20}  +  \|    v\| _{\widetilde{\Sigma}} ^2  +|\dot D   -c| ^2+    |\dot     c| ^2 +A_1 ^{-40}  \| v \| _{  { \Sigma }_{2A A_1}    }^2 \right ] \text{ and}\\ \| v \| _{  { \Sigma }_{2A A_1}    }^2& \lesssim     \widetilde{C}\left [  \dot{\mathcal{I}}_2 +A_1 ^{-2}\dot {\mathbf{I}}_{20}+ A_1 ^{-100} \( \|    v\| _{\widetilde{\Sigma}} ^2  +|\dot D   -c| ^2+    |\dot     c| ^2\) +A_1 ^{-40} \| v \| _{  { \Sigma }_{1A A_1}    }^2 \right ] .
 \end{align*} 

\end{proof}

\textit{Proof of Lemma \ref {prop:1virial}.} We integrate  in the time  interval $I$ the inequalities in \eqref{eq:lem:1stV1} and we use
\begin{align*}&
  |\mathcal{I}_{i} (t)|\le \| \varphi _{iAA_1}\| _{L^\infty (\R ) }\| v(t) \| _{L^2 (\R ) }^2 \lesssim A  B \delta ^2 \text{  for $i=1,2$ and similarly }
  |\mathbf{I}_n (t)|  \lesssim A _1 B \delta ^2
\end{align*}

By Lemmas  \ref{lem:lemdscrt}     and \ref{lem:1stV1},    for $A_1 ^{10}=B^2\gg B$    the following ensues, proving Lemma \ref {prop:1virial},
\begin{align*}&
   \| v \| _{L^2(I, { \Sigma }_{1A A_1} )} \lesssim  \sqrt{A B} \delta  +      \| v \| _{L^2(I, \widetilde{\Sigma}   )}   + B ^{-1}   \| v \| _{L^2(I, { \Sigma }_{2A A_1} )}   \text{ and} \\& \| v \| _{L^2(I, { \Sigma }_{2A A_1} )} \lesssim  \sqrt{A B} \delta  +    A_1 ^{-50}   \| v \| _{L^2(I, \widetilde{\Sigma}   )}   + A_1 ^{-10}   \| v \| _{L^2(I, { \Sigma }_{1A A_1} )} .
\end{align*}
 \qed

To complete the proof of {Proposition} \ref{prop:continuation} we are left
with  the proof of Lemma
\ref{prop:smooth11}, which will take sections  \ref{sec:disp}--\ref{sec:gKdV21hard}. For this we will first state  some linear  dispersive and smoothing  estimates for the linearization operator in \ref{eq:Lin}. We will then use this linear theory to  prove  Lemma
\ref{prop:smooth11}. Finally, we will prove the linear estimates.

\section{Dispersion and Kato smoothing  for the linearized equation }\label{sec:disp}

We will need  the    dispersive estimates for the linearized  operator in \eqref{eq:Lin}  due to Pego and Weinstein \cite{PegoWei2}.
  We consider the operators
\begin{align}\label{eq:projections}
  P_{[c]} := \sum _{i=1,2}   \xi _i [c]    \< \eta _i [c],  \cdot \>
 \text{ and } Q_{[c]}=1- P_{[c]} .
\end{align} Here $c=1$ and we will  omit the $[c]$
These are the operators associated to the spectral decomposition
\begin{align}
  \label{eq:specdec} L^2 _a = N_g\( \mathcal{L} \) \oplus N_g^\perp \( \mathcal{L}^* \)
\end{align}
where the perpendicularity is  with $ N_g  \( \mathcal{L}^*  \) \subseteq  L^2 _{-a}$   in terms
of the $L^2$ inner product in \eqref{eq:mass}. Then we quote Pego and Weinstein \cite[Theorem 4.2]{PegoWei2} to which we refer for the proof, where here $c=1$. Notice that the following follows for all  $ p\in (1,5) $ by Proposition \ref{prop:noeigh}.

 \begin{theorem}\label{thm:pwdisp}For
   $a\in (0, \sqrt{1/3})$    and $ p\in (1,5)$, there is a constant $C( a,p)>0$ such that
   \begin{align}\label{eq:pwdisp1}
      \| e^{  t \mathcal{L} } Q  v_0  \| _{L^2_a}\le C( a,p)  \<t\>^{-1/2} e^{-a(1 -a^2) t}\|   Q v_0\| _{L^2_a}\text{ for all }v_0\in L^2_a.
   \end{align}

 \end{theorem}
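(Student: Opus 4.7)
The plan is to follow Pego–Weinstein by first conjugating away the exponential weight and then representing the semigroup through an inverse Laplace (Dunford) integral. Set $\widetilde v := e^{ax}v$ and $\widetilde{\mathcal{L}} := e^{ax}\mathcal{L}e^{-ax}$, an unbounded operator on $L^2(\R)$, and similarly transport $Q$ to $\widetilde Q := e^{ax}Q e^{-ax}$. Using $\mathcal{L} = -\partial_x^3 + \partial_x - \partial_x(p\phi^{p-1}\,\cdot\,)$ one computes
\begin{equation*}
\widetilde{\mathcal{L}} = -(\partial_x - a)^3 + (\partial_x - a) - (\partial_x - a)\bigl(p\phi^{p-1}\,\cdot\,\bigr) =: A_a + V_a,
\end{equation*}
where $A_a$ has constant coefficients and $V_a$ is a lower-order perturbation with exponentially decaying coefficients. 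The symbol of $A_a$ is
\begin{equation*}
m(\xi) = -a(1-a^2) - 3a\xi^2 + i\bigl(\xi^3 + (1-3a^2)\xi\bigr),
\end{equation*}
so $\Re m(\xi) \le -a(1-a^2)$, and by Weyl the essential spectrum of $\widetilde{\mathcal{L}}$ lies in $\{\Re\lambda \le -a(1-a^2)\}$.

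By Proposition \ref{prop:PEGOWEI2}(iv) together with Proposition \ref{prop:noeigh}, the only point of $\sigma(\widetilde{\mathcal{L}})$ outside this half-plane is the eigenvalue $\lambda = 0$, whose generalized eigenspace is $\Span\{e^{ax}\xi_1, e^{ax}\xi_2\}$, and this space is killed by $\widetilde Q$. Hence for $\widetilde v_0 = \widetilde Q\widetilde v_0$ I would write
\begin{equation*}
e^{t\widetilde{\mathcal{L}}}\widetilde v_0 = \frac{1}{2\pi i}\int_{\gamma - i\infty}^{\gamma + i\infty} e^{t\lambda}(\lambda - \widetilde{\mathcal{L}})^{-1}\widetilde v_0\, d\lambda
\end{equation*}
with $\gamma = -a(1-a^2) + \varepsilon$ for arbitrarily small $\varepsilon > 0$, and then factor $e^{-a(1-a^2)t}$ out of $e^{t\lambda}$.

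The remaining task is to bound the contour integral, as an operator on $L^2$, by $C\langle t\rangle^{-1/2}$. For $A_a$ alone, the resolvent is the Fourier multiplier $(\lambda - m(\xi))^{-1}$ and the $L^2\to L^2$ norm reduces to $\mathrm{dist}(\lambda, m(\R))^{-1}$, which can be tracked explicitly from the cubic-plus-negative-quadratic structure of $m$. For the full $\widetilde{\mathcal{L}}$ the second resolvent identity
\begin{equation*}
(\lambda - \widetilde{\mathcal{L}})^{-1} = (\lambda - A_a)^{-1} + (\lambda - A_a)^{-1} V_a (\lambda - \widetilde{\mathcal{L}})^{-1},
\end{equation*}
combined with compactness of $V_a (\lambda - A_a)^{-1}$ and the absence of embedded eigenvalues (Proposition \ref{prop:noeigh}), transfers the bound from $A_a$ to $\widetilde{\mathcal{L}}$. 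Plancherel in $\Im\lambda$ applied to the contour integral, together with a van der Corput argument for the oscillatory factor coming from the cubic phase $\xi \mapsto \xi^3 + (1-3a^2)\xi$, produces the $\langle t\rangle^{-1/2}$ rate.

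The main obstacle is the uniform resolvent estimate along $\Gamma$ near the tangency of the essential-spectrum curve $m(\R)$ with the line $\Re\lambda = -a(1-a^2)$, which occurs at $\xi = 0$: there $m$ has an Airy-type degeneracy, and one cannot simply read off the resolvent bound from $\mathrm{dist}(\lambda, m(\R))^{-1}$. Resolving this requires decomposing the resolvent kernel by means of Jost-type solutions of the spectral equation (indeed developed later in \S\ref{sec:jost}), splitting off the piece responsible for the dispersive $t^{-1/2}$ behavior from the contributions that contract exponentially faster. This is precisely the step that forces the combination of exponential weight and dispersive oscillation, and it is the technical heart of the Pego–Weinstein argument.
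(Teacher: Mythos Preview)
The paper does not prove this theorem at all: it simply quotes it as \cite[Theorem 4.2]{PegoWei2}, remarking only that the extension to all $p\in(1,5)$ follows from Proposition~\ref{prop:noeigh} (absence of nonzero eigenvalues). So there is nothing to compare against beyond the observation that your outline is indeed the skeleton of the Pego--Weinstein argument.

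That said, as a self-contained proof your proposal has a genuine gap, which you yourself name: the uniform resolvent bound along the contour $\Gamma$ near the tangency of $m(\R)$ with $\{\Re\lambda=-a(1-a^2)\}$ at $\xi=0$. You correctly diagnose this as the Airy-type degeneracy that drives the $\langle t\rangle^{-1/2}$ rate, but you do not carry it out. The statement that ``Plancherel in $\Im\lambda$ \ldots\ together with a van der Corput argument \ldots\ produces the $\langle t\rangle^{-1/2}$ rate'' is a placeholder, not an argument: one needs a concrete high/low-frequency decomposition of the resolvent, an explicit Airy-kernel representation for the low-frequency piece of $(\lambda-A_a)^{-1}$, and a careful transfer of those bounds through the second resolvent identity. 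The Jost-function machinery in \S\ref{sec:jost} of the present paper is developed for a different purpose (the limiting-absorption estimates of Proposition~\ref{prop:smooth2} on $\Re\lambda=0$) and would not directly substitute for the weighted-space analysis in \cite{PegoWei2}. In short, your sketch is on the right track but stops exactly where the real work begins; the honest version of your write-up is the paper's own: cite \cite[Theorem 4.2]{PegoWei2} and invoke Proposition~\ref{prop:noeigh}.
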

\qed

Besides the above dispersive estimate, we   will need the following version of the Kato smoothing, which we will prove in this paper. Notice that in \eqref{eq:smooth1} we use in the left hand side  the weighted spaces in \eqref{eq:Lebwight} while in \eqref{eq:smooth11} we have the standard $ L^1\( \R\)$ space. Notice that later we will need the examine in detail why in
\eqref{eq:smooth11} and \eqref{eq:smooth21} we fail the upper bound with the norm of $ L^1\( \R\)$.

\begin{proposition}\label{prop:smooth} For  any $p\in (1,5)$   there is a constant $C(  p)>0$ such that
  \begin{align}\label{eq:smooth1} &
      \left \| \int _0 ^t    e^{  (t-s) \mathcal{L}  } Q  f(s) ds  \right \| _{L^2 \( \R _+ ,    \widetilde{\Sigma}\) }\le C(  p)  \|     f \| _{ L^{ 1, 1} \( \R   ,    L^{ 2}\( \R _+\)\) }\text{ for all }f\in L^{ 1, 1} \( \R   ,    L^{ 2}\( \R _+\)\)   \text{ and }  \\&   \label{eq:smooth11}    \left \| \int _0 ^t    e^{  (t-s) \mathcal{L}  } Q \partial _x f(s) ds  \right \| _{L^2 \( \R _+ ,    \widetilde{\Sigma}\) }\le C(  p)  \|     f \| _{L^{ 1 } \( \R   ,    L^{ 2}\( \R _+\)\) }\text{ for all }f\in L^{ 1 } \( \R   ,    L^{ 2}\( \R _+\)\) .
   \end{align}
\end{proposition}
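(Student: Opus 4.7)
The plan is to recast \eqref{eq:smooth1}--\eqref{eq:smooth11} as a weighted resolvent estimate on the spectrum $i\mathbb{R}$ of $\mathcal{L}|_{\mathrm{Ran}\,Q}$, and then to derive the latter from the Jost-function representation of the kernel of $QR(\lambda)$, $R(\lambda):=(\lambda-\mathcal{L})^{-1}$, developed in \S\ref{sec:jost}.

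First, I would reduce to a spectral estimate on the imaginary axis. By Propositions \ref{prop:PEGOWEI2}--\ref{prop:noeigh} we have $\sigma(\mathcal{L}|_{\mathrm{Ran}\,Q})\subseteq i\mathbb{R}$ with no nonzero eigenvalue. Setting $F:=f$ or $F:=\partial_x f$ and extending $F$ by $0$ for $s<0$, the Laplace transform in $t$ of
$$u(t):=\int_0^t e^{(t-s)\mathcal{L}}Q F(s)\,ds$$
satisfies $\widetilde u(\lambda)=R(\lambda)Q\widetilde F(\lambda)$ for $\mathrm{Re}\,\lambda>0$, and $R(\lambda)$ admits a boundary value on $i\mathbb{R}\setminus\{0\}$. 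A Kato-type $TT^*$ argument combined with Plancherel in $t$ would then reduce \eqref{eq:smooth1}--\eqref{eq:smooth11} to the uniform boundedness, for $\tau\in\mathbb{R}$, of the weighted boundary-value operators
$$\sech(\kappa\,\cdot)\,R(i\tau+0)\,Q\,(\partial_x)^k\,\sech(\kappa\,\cdot)\in\mathcal{L}(L^2(\mathbb{R})),\qquad k\in\{0,1\}.$$

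Second, I would construct the kernel of $QR(\lambda)$ out of the three Jost solutions $f_j(\cdot,\lambda)$, $j=1,2,3$, of $(\mathcal{L}-\lambda)u=0$ from \S\ref{sec:jost}, by gluing appropriate combinations of the $f_j$ across the diagonal $x=y$ with coefficients determined by the Wronskian of $\{f_1,f_2,f_3\}$. The projector $Q$ subtracts the contribution of the generalized kernel $N_g(\mathcal{L})=\mathrm{Span}\{\xi_1,\xi_2\}$ and so removes the pole of $R(\lambda)$ at $\lambda=0$, yielding a kernel that extends continuously to $\lambda=0$ in the weighted sense above. The uniform bound on the weighted resolvent then follows by inserting the Jost estimates \eqref{eq:m1_estimate_partial}, \eqref{eq:m3_estimate_partial}, \eqref{eq:estm21}: the exponential weight $\sech(\kappa x)$ on both sides absorbs any polynomial-in-$x$ behaviour of the $f_j(x,\lambda)$, while the Jost bounds provide both the uniform control in the spatial weights and the decay in $|\tau|$ needed to close the argument. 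For $k=1$ I would integrate by parts in $y$ in the Green's representation, or equivalently use that $\partial_y f_j$ obeys the same kind of weighted bounds as $f_j$.

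The main obstacle is the second step, namely showing that $QR(\lambda)$ is uniformly controlled up to the threshold $\lambda=0$. Because $\lambda=0$ is simultaneously the edge of $\sigma(\mathcal{L})$ and the location of the generalized kernel of $\mathcal{L}$, the Wronskian used to glue the Jost functions develops a zero at $\lambda=0$, and one must verify that the subtraction effected by $Q$ exactly matches this singularity and leaves no residual divergence as $\lambda\to 0$ along $i\mathbb{R}$. This is the non-selfadjoint, third-order analogue of the zero-energy resolvent analysis in Deift--Trubowitz \cite{DT1979}, and, as emphasised in the introduction, it is precisely where the improved Jost bounds of \S\ref{sec:jost} are indispensable.
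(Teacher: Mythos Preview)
Your overall plan---reduce to a uniform resolvent bound on $i\mathbb{R}$ via a Laplace/Plancherel argument and then control the resolvent through the Jost functions of \S\ref{sec:jost}---is exactly the paper's strategy. There is, however, a mismatch in your reduction step.

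The paper does \emph{not} use a $TT^*$ argument or a symmetric two-sided weighting. Because $\mathcal{L}=\partial_x L_+$ is not skew-adjoint on $L^2$, the classical Kato $TT^*$ machinery (which rests on self-adjointness of $i\mathcal{L}$) is not directly available; and even if it were, the natural output would be a bound on $\sech(\kappa\cdot)\,R(i\tau+0)\,Q\,\sech(\kappa\cdot)$ in $\mathcal{L}(L^2)$, which does \emph{not} imply the $L^1\to\widetilde{\Sigma}$ mapping required by \eqref{eq:smooth1}--\eqref{eq:smooth11}. Instead, the paper represents $1_{\R_+}(t)e^{t\mathcal{L}}Q$ by a Dunford contour integral, deforms the contour to $i\mathbb{R}$ following Mizumachi \cite{MizuKdV2001} (which uses Proposition~\ref{prop:noeigh}), applies Plancherel in $t$ with values in the Hilbert space $\widetilde{\Sigma}$, and reduces directly to the one-sided bound
\[
\sup_{\lambda\in\mathbb{R}}\left\|R_{\mathcal{L}}^{+}(i\lambda)\,Q\,(\partial_x)^k\right\|_{\mathcal{L}(L^1(\mathbb{R}),\,\widetilde{\Sigma})}<\infty,\qquad k\in\{0,1\},
\]
which is Proposition~\ref{prop:smooth2}. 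You should replace your symmetric weighted bound by this one.

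Your description of the threshold analysis at $\lambda=0$ is correct in outline but misses a structural point. The Wronskian $W(\lambda)=\lambda D(\lambda)W_0(\lambda)$ vanishes to \emph{third} order at $\lambda=0$ (since $D(0)=D'(0)=0$), whereas $Q$, which projects off a two-dimensional generalized kernel, can only be expected to cancel two orders. In the paper's decomposition \eqref{eq:res0}--\eqref{eq:res2} this is exactly what happens for the pieces $R_{23}+R_{21}$: their leading Taylor polynomials are killed by $Q$ because $\widetilde{b}_{23}(\cdot,0)$ and $\partial_\lambda\widetilde{b}_{23}(\cdot,0)$ lie in $\mathrm{Span}\{\eta_1,\eta_2\}$ (Lemma~\ref{lem:dual0}). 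The piece $R_1$, however, is regular at $\lambda=0$ \emph{without} any help from $Q$, because of the intrinsic vanishing $\partial_\lambda^a b_{13}(\cdot,0)=0$ for $a=0,1,2$ (Lemma~\ref{lem:b13at0}), which is proved using the parity relation \eqref{eq:symmetryf13} between $f_1$ and $f_3$. If you rely on $Q$ alone to remove the pole, you will be left with a residual $\lambda^{-1}$ divergence in $R_1(i\lambda)$.
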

\begin{proof} The two estimates are proved similarly.      Pego and Weinstein \cite[p. 328]{PegoWei2} show  that     $e^{  t \mathcal{L}  }$  for $t\ge 0$  is a $C_0$ semigroup  in $L^2(\R )$. This implies by Pazy   \cite[Corollary 7.5]{pazybook} that   there exists a  $\gamma >0$ such that   for $w_0\in H^6(\R )=D(\mathcal{L} ^2)$  we have
 \begin{align*}
  1 _{\R _+}(t)  e^{  t \mathcal{L}  } w_0 = \frac{1}{2\pi }     \int _{  -  \infty} ^{+ \infty}    e ^{    \(\gamma +\im  \lambda \) t }R_{\mathcal{L} } (\gamma +\im \lambda )  w_0 d\lambda    \text{ in $L ^{ 2}  (\R ) $}.
 \end{align*}
 By Proposition \ref{prop:noeigh} it is possible following Mizumachi \cite[p. 1075]{MizuKdV2001} to deform the   path of integration so that, for $w_0$   a smooth   function exponentially decaying to 0 as $|x|\to  +\infty$, we have for the space in  \eqref {eq:weightmiller} for $\alpha , b \in \R _+$
 \begin{align*}
   1 _{\R _+}(t) e^{  t \mathcal{L}  }Q w_0 = \frac{1}{2\pi}   \lim _{\varepsilon \to 0^+}    \int _{ \R}    e ^{    \im  \lambda   t }R_{\mathcal{L} }^+   ( \im \lambda ) e^{-\varepsilon \lambda ^2} Qw_0 d\lambda    \text{ in $L ^{ 2}   _{\alpha ,b} $}
 \end{align*}
with $R_{\mathcal{L} }   ( z )$ for  $\Re z \ge 0$ the resolvent introduced in \S \ref{sec:jost}. For  any $b>0$ and for $\alpha \in (0, \kappa )$
we have $L ^{ 2}   _{\alpha ,b}  \subset \widetilde{\Sigma}$ so the above formula holds in $\widetilde{\Sigma}$.
Then by Plancherel,   initially for $f(t,x)$ smooth and exponentially  decaying to 0 as $|t|+|x|\to  +\infty$ and then extending by density,  we obtain
  \begin{equation}\label{eq:fourtran}
     \begin{aligned}
        &
      \left \| \int _0 ^t    e^{  (t-s) \mathcal{L} } Q  f(s) ds  \right \| _{L^2 \( \R _+ ,    \widetilde{\Sigma}\) } =  \left \|     R_{\mathcal{L} }^+ (\im \lambda )  Q  \widehat{f}(\lambda)   \right \| _{L^2 \( \R  ,    \widetilde{\Sigma}\) }\\&\le   \left \|     R_{\mathcal{L} }^+ (\im \lambda )  Q    \right \| _{L^\infty \( \R  ,    \mathcal{L} \( L^{ 1, 1}\( \R\),   \widetilde{\Sigma} \)  \) }   \left \|         {f}    \right \| _{L^{1,1} \( \R  ,    L^2\( \R_+\)\) } .
     \end{aligned}
  \end{equation}
   The proof  of \eqref{eq:smooth1} is completed by means of Proposition \ref{prop:smooth2}, see below. The proof of \eqref{eq:smooth11} is similar.

\end{proof}

The following is proved in \S \ref{sec:proofsmooth2} using the information on the Jost functions derived in \S \ref{sec:jost}. We will examine in considerable detail the reason why  in \eqref{eq:smooth21}  we need to use the space  $L^{ 1, 1}\( \R\)$  instead of the larger space  $L^{ 1 }\( \R\)$.
\begin{proposition}\label{prop:smooth2} For   any $p\in (1,5)$   there is a constant $C(  p)>0$  such that
  \begin{align}\label{eq:smooth21}&
     \sup  _{\lambda \in \R \backslash \{ 0  \}} \left \|     R_{\mathcal{L} } (\im \lambda )  Q    \right \| _{ \mathcal{L}\( L^{ 1, 1}\( \R\),   \widetilde{\Sigma} \)   }\le C(  p) \text{ and } \\&    \label{eq:smooth221}
     \sup  _{\lambda \in \R \backslash \{ 0  \}} \left \|     R_{\mathcal{L} } (\im \lambda )  Q  \partial _x  \right \| _{ \mathcal{L}\( L^1\( \R\),   \widetilde{\Sigma} \)   }\le C(  p).
   \end{align}
\end{proposition}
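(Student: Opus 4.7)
The plan is to represent the resolvent $R_{\mathcal{L}}^{+}(\im\lambda)$ via the Jost solutions of the third--order spectral equation $(\mathcal{L}-\im\lambda)f=0$. For $\lambda\in\R$ the characteristic equation at $\pm\infty$, namely $\mu^{3}-\mu+\im\lambda=0$, has three roots whose real parts separate into two decay regimes on each half--line. Selecting the Jost functions $f_{1},f_{2},f_{3}$ of \S\ref{sec:jost} to have the appropriate asymptotics at $\pm\infty$, one can write
\begin{equation*}
  R_{\mathcal{L}}^{+}(\im\lambda)g(x)=\int_{\R}G^{+}(x,y,\lambda)\,g(y)\,dy,
\end{equation*}
where $G^{+}(x,y,\lambda)$ is a piecewise sum of products $f_{j}(x,\lambda)\widetilde f_{k}(y,\lambda)$ divided by an Evans--type Wronskian $W(\lambda)$. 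Since $\|R_{\mathcal{L}}^{+}(\im\lambda)Qg\|_{\widetilde\Sigma}=\|\sech(\kappa x)R_{\mathcal{L}}^{+}(\im\lambda)Qg\|_{L^{2}}$, the estimate \eqref{eq:smooth21} reduces to the pointwise bound
\begin{equation*}
  \sup_{\lambda\in\R}\;\sup_{y\in\R}\;\bigl\|\sech(\kappa x)\,(G^{+}Q)(x,y,\lambda)\bigr\|_{L^{2}_{x}}<\infty,
\end{equation*}
after which duality against $L^{1}_{y}$ data finishes the argument.

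To control the kernel, I would insert the pointwise Jost estimates alluded to in the introduction (formulas \eqref{eq:m1_estimate_partial}, \eqref{eq:m3_estimate_partial}, \eqref{eq:estm21}), which are stronger than the Deift--Trubowitz ones for Schr\"odinger. Two of the three Jost solutions decay exponentially on the relevant half--line, contributing manifestly $L^{2}_{x}$ terms once they are multiplied by the $\sech(\kappa x)$ weight. The remaining, non--decaying Jost factor is tolerated because, in the kernel, it always appears multiplied either by the exponentially decaying soliton potential $p\phi^{p-1}$ (entering through the Jost integral equations) or by the weight $\sech(\kappa x)$; the assumption $\kappa<p-1$ from \eqref{eq:condk0} is exactly what is needed so that the product is integrable and $L^{2}$ in $x$ uniformly in $y$ and $\lambda$.

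The key new ingredient is the role of the projection $Q$. At $\lambda=0$ the Wronskian $W(\lambda)$ vanishes (of order two) because $\mathcal{L}$ has the two--dimensional generalized kernel $\mathrm{Span}\{\xi_{1},\xi_{2}\}$, so $G^{+}(x,y,\lambda)$ has a pole of order two there. However, acting on $Qg$ removes any component in $\mathrm{Span}\{\xi_{1},\xi_{2}\}$, which by the biorthogonality \eqref{eq:M2.5} and a standard Laurent expansion of $R_{\mathcal{L}}^{+}$ around $\lambda=0$ is exactly the condition that kills the principal and subprincipal parts; hence $R_{\mathcal{L}}^{+}(\im\lambda)Q$ extends continuously across $\lambda=0$. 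This mirrors the mechanism used by Pego--Weinstein \cite{PegoWei2} and Mizumachi \cite{MizuKdV2001}.

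For large $|\lambda|$ one uses the explicit asymptotics of the roots $\mu_{j}(\lambda)\sim|\lambda|^{1/3}$, which yield $1/W(\lambda)=O(|\lambda|^{-2/3})$ and bounded oscillatory Jost factors, so the kernel bound is uniform. Finally, \eqref{eq:smooth221} follows by the same scheme with $\partial_{x}$ either integrated by parts against the $L^{1}$ data or absorbed into the $y$--derivative of the Jost factor in $G^{+}(x,y,\lambda)$, which obeys the same type of bounds. The main obstacle I anticipate is the careful bookkeeping at $\lambda=0$: one must show not just that the pole of order two in $1/W$ is canceled by $Q$ but that the resulting limit, estimated in the norm $\mathcal{L}(L^{1},\widetilde\Sigma)$, stays uniformly bounded as $\lambda\to 0$, which requires matching the order of vanishing of $Q$--paired Jost products against the order of the Wronskian zero.
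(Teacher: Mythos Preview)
Your plan matches the paper's strategy in outline: split into small and large $|\lambda|$, write the small-$\lambda$ resolvent kernel in terms of Jost functions, and use $Q$ to remove the singularity at $\lambda=0$. However, there are two points where your sketch does not line up with what actually has to be done.

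First, the appeal to a ``standard Laurent expansion of $R_{\mathcal{L}}^{+}$ around $\lambda=0$'' is not available: $0\in\im\R=\sigma(\mathcal{L})$ is embedded in the essential spectrum, so $R_{\mathcal{L}}^{+}(\im\lambda)$ is a limiting-absorption object, not an honest resolvent, and there is no abstract Riesz--projection argument identifying the principal part with $P$. The paper instead computes the singular parts of the kernel explicitly (Lemmas~\ref{lem:jostlambda0} and~\ref{lem:dual0}) and shows that the first two Taylor coefficients of $\widetilde b_{23}(\cdot,\lambda)$ at $\lambda=0$ lie in $\Span\{\eta_1,\eta_2\}$, so that $(R_{23}^{(0)}+R_{21}^{(0)})\circ Q=0$. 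That identification is the substance of the ``matching'' you flag as the main obstacle.

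Second, the piece $R_1(\lambda)$ (the one carrying the non-decaying factor $f_2(x,\lambda)$) is \emph{not} regularized by $Q$ at all. With the modified $f_2$ of \eqref{eq:deff2} the Wronskian acquires an extra factor $\lambda$, so $W(\lambda)=\lambda D(\lambda)W_0(\lambda)$ has a \emph{triple} zero, not the double zero you state. The compensating input is Lemma~\ref{lem:b13at0}: $\partial_\lambda^a b_{13}(x,0)=0$ for $a=0,1,2$, proved by a parity argument using \eqref{eq:symmetryf13}, which makes $R_1^{(0)}\equiv 0$ independently of $Q$. Without this, your kernel bound would fail: the modified $f_2$ is needed because the naive $\widetilde f_2$ grows like $e^{\Re(\mu_3-\mu_2)x}$ (Lemma~\ref{lem:estm2}), and $\sech(\kappa x)$ with $\kappa<p-1<1$ (for $p<2$) does not absorb that growth, contrary to what you suggest. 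The condition $\kappa<p-1$ is not what saves this term.

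For large $|\lambda|$ the paper does not use Jost asymptotics directly but the resolvent identity $R^{+}=(1-R_0^{+}\partial_x f'(\phi))^{-1}R_0^{+}$ together with a Fredholm argument; your direct approach via $\mu_j\sim|\lambda|^{1/3}$ would also work and is a reasonable alternative.
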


\section{Bounding $\| v \| _{\widetilde{\Sigma}}$  by Kato smoothing}\label{sec:smooth}

We prove Lemma \ref{eq:sec:smooth11} assuming Proposition \ref{prop:smooth2}.
Multiplying equation \eqref{eq:gKdV3}  by $\zeta_{B}$
we obtain
\begin{align}     \nonumber
    \partial _t   \(  \zeta_{B}v    \)  &= \mathcal{L}    \(  \zeta_{B}v    \)  + [\zeta_{B}, \mathcal{L} ]v
\\& + (\dot D -c )  \zeta_{B} v'+
( \dot D -c ) \ \zeta_{B}  \xi _1[c] + \dot c \  \zeta_{B} \xi _2[c] \nonumber
\\&
+    \zeta_{B} \(   \mathcal{L}_{[c ]} - \mathcal{L}   \) v     -   \zeta_{B} \partial _x    \( f( \phi  _{c} + v  )  -  f( \phi  _{c} )-  f'( \phi  _{c} ) v\)     .   \label{eq:gKdV14}
\end{align}
Next we consider the spectral decomposition
 \begin{align}\label{eq:defw}
    \zeta_{B}v=P \zeta_{B}v+ w \text { where }  w:= Q \zeta_{B}v .
 \end{align}
The first and preliminary observation is the following.

\begin{lemma}
  \label{lem:disczbv} We have
\begin{align}
   \|  P \zeta_{B}v \| _{L^2}\lesssim B ^{-1}  (  \| v  \|_{  { \Sigma }_{1A A_1}} + \| v  \|_{  { \Sigma }_{2A A_1}}  ) .\label{eq:disczbv}
\end{align}
\end{lemma}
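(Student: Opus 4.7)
\textit{Proof proposal.} The plan is to exploit the near-orthogonality of $\zeta_B v$ to the generalized kernel elements encoded by the modulation. The projection $P$ is built from $\eta_i = \eta_i[1]$ (with $c=1$, as noted just before \eqref{eq:projections}), whereas the modulation identity \eqref{61} provides $\<v, \zeta_B \eta_i[c(u)]\> = 0$ with $c = c(u)$ close to but not equal to $1$. The required factor of $\delta$ will come from the difference $\eta_i[1] - \eta_i[c]$, which is $O(|c-1|) = O(\delta)$ by \eqref{eq:mod2}.

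First I would expand
\begin{align*}
P\zeta_B v = \sum_{i=1,2} \xi_i \<\zeta_B \eta_i[1], v\> = \sum_{i=1,2} \xi_i \<\zeta_B (\eta_i[1] - \eta_i[c]), v\>,
\end{align*}
where the second equality uses \eqref{61} to discard the $\eta_i[c]$ piece. Then I would apply Cauchy--Schwarz with the weight $\sech(x/A)$,
\begin{align*}
|\<\zeta_B (\eta_i[1] - \eta_i[c]), v\>| \le \left\| \cosh\!\left(\tfrac{x}{A}\right) \zeta_B (\eta_i[1] - \eta_i[c]) \right\|_{L^2} \|v\|_{\Sigma_A}.
\end{align*}

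To bound the first factor I would use the fundamental theorem of calculus,
\begin{align*}
\eta_i[1] - \eta_i[c] = (1-c)\int_0^1 \partial_{c'} \eta_i[c_s]\,ds, \qquad c_s := c + s(1-c),
\end{align*}
together with $|c-1|\lesssim\delta$ from \eqref{eq:mod2}. From the explicit formulas \eqref{eq:ker2} and the identity \eqref{eq:partcphic}, the $c$-derivatives $\partial_{c'}\eta_i[c']$ are uniformly bounded (for $c'$ near $1$) linear combinations of $\phi_{c'}$, $\partial_{c'}\phi_{c'}$, $x\phi_{c'}'$, plus the antiderivative $\int_{-\infty}^x \partial_{c'}\phi_{c'}\,dy$ in the case of $\eta_1$. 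Since $\zeta_B(x) \lesssim e^{-|x|/B}$ outside $|x|\le 2$ and $\cosh(x/A) \lesssim e^{|x|/A}$ with $A \gg B$, the weight $\cosh(x/A)\zeta_B$ lies in $L^2(\R)$ with a bound depending only on $A,B$. This yields
\begin{align*}
\left\| \cosh\!\left(\tfrac{x}{A}\right) \zeta_B (\eta_i[1] - \eta_i[c]) \right\|_{L^2} \lesssim \delta,
\end{align*}
and combining with $\|\xi_i\|_{L^2} \lesssim 1$ then produces \eqref{eq:disczbv}.

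The only mildly delicate point is that the antiderivative in the definition of $\eta_1[c]$ tends to a nonzero constant as $x\to+\infty$ (cf.\ \eqref{eq:asseta1}), so neither $\eta_1[c]$ nor $\partial_c\eta_1[c]$ decays on the right; but the exponential decay of $\zeta_B$ comfortably absorbs this growth, so this is not a substantive obstacle.
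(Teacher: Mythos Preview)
Your overall plan is right, and the case $i=2$ goes through as you say. The gap is in the case $i=1$, precisely at the point you flag as ``mildly delicate.'' The function $\eta_1[1]-\eta_1[c]$ converges, as $x\to+\infty$, to a nonzero constant of size $O(|c-1|)=O(\delta)$ (this limit does not vanish for $p\neq 2$, as one checks from \eqref{eq:ker2} and \eqref{eq:asseta1}). Thus on the right half-line your Cauchy--Schwarz factor obeys only
\[
\big\|\cosh(x/A)\,\zeta_B\,(\eta_1[1]-\eta_1[c])\big\|_{L^2(\R_+)}
\;\lesssim\;\delta\,\|\cosh(x/A)\,\zeta_B\|_{L^2(\R_+)}\;\sim\;\delta\,B^{1/2},
\]
because $\zeta_B$ decays only at rate $e^{-|x|/B}$ while $\cosh(x/A)\sim e^{|x|/A}$ with $A\gg B$. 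So your argument yields $\|P\zeta_B v\|_{L^2}\lesssim \delta B^{1/2}\|v\|_{\Sigma_A}$, not the stated $\delta\|v\|_{\Sigma_A}$; the hidden constant in $\lesssim$ is supposed to be independent of $B$.

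The paper closes this gap by using \emph{both} orthogonality relations in \eqref{61}. From $\<\zeta_B\eta_2[c],v\>=0$ one gets $\<\phi_c,\zeta_Bv\>=0$, and then $\<\zeta_B\eta_1[c],v\>=0$ forces $\big\langle\int_{-\infty}^{x}\partial_c\phi_c\,,\,\zeta_Bv\big\rangle=0$. One may therefore subtract a suitable multiple of $\int_{-\infty}^{x}\partial_c\phi_c$ for free, which (after the rescaling \eqref{eq:dcphi}) converts the non-decaying antiderivative into $\int_{\sqrt{c}x}^{x}\Lambda_p\phi(y)\,dy$, a function bounded by $Ke^{-|x|/4}|c-1|$. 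Pairing this exponentially decaying quantity with $\zeta_Bv$ then gives the honest $\delta\|v\|_{\Sigma_A}$ bound with no $B^{1/2}$ loss.
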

\begin{proof}

 We have the following,
\begin{align*}
  P \zeta_{B}v =   P_{[c]}\zeta_{B}v + \(  P -  P_{[c ]}  \) \zeta_{B}v .
\end{align*}
Then, since  $\< \eta _1 [c],  \zeta_{B}v \>=0$  by  \eqref{61}, we obtain  \small
\begin{align*}
  P_{[c]}\zeta_{B}v =   \xi _2 [c]    \< \eta _2 [c],  \zeta_{B}v \> =   \xi _2 [c]    \< \eta _2 [c],  ( \zeta_{B} -1)(\vartheta _{1A_1} + \vartheta _{2A_1}) v \> =O\(  B ^{-1} (  \| v  \|_{  { \Sigma }_{1A A_1}} + \| v  \|_{  { \Sigma }_{2A A_1}}  ) \) .
\end{align*}\normalsize
 Next we focus on
\begin{align*}
   \(  P_{[c ]} -  P  \) \zeta_{B}v  = \sum _{i=1,2}\left [  {(\xi _i [c]  -\xi _i )  \< \eta _i [c], \zeta_{B}v\>}
+\xi _i  \< \eta _i [c]-\eta _i   , \zeta_{B}v\>\right ] .
\end{align*}
Here    \eqref{61} implies   $\< \eta _1 [c], \zeta_{B}v\> =0$ and
\begin{align}\label{eq:mod111}
   |\< \eta _2 [c], \zeta_{B}v\>|\lesssim   B ^{-1}  \|   \zeta _A v \| _{ L^2}  =  B ^{-1}  \|   \zeta _A (\vartheta _{1A_1} + \vartheta _{2A_1}) v \| _{ L^2} \le  B ^{-1}  (  \| v  \|_{  { \Sigma }_{1A A_1}} + \| v  \|_{  { \Sigma }_{2A A_1}}  ) .
\end{align}
It is elementary that
\begin{align}\label {eq:disczbv1}
   \| \xi _2   \< \eta _2 [c]-\eta _2  , \zeta_{B}v\> \| _{L^2}\lesssim |\< \eta _2 [c]-\eta _2  , \zeta_{B}v\>|
\lesssim |c-1| \| \zeta_{B} v \| _{L^2}\lesssim \delta  (  \| v  \|_{  { \Sigma }_{1A A_1}} + \| v  \|_{  { \Sigma }_{2A A_1}}  ) .
\end{align}
We have the following, where we exploit  \eqref{61},
\begin{align}\label {eq:disczbv2}
   \| \xi _1  \< \eta _1 [c]-\eta _1   , \zeta_{B}v\> \| _{L^2}\lesssim |\< \eta _1 [c]-\eta _1  , \zeta_{B}v\>|  =|\<  \eta _1 , \zeta_{B}v\>| .
\end{align}
Recall by \eqref{eq:ker2} that
\begin{align*}
  \<  \eta _1 , \zeta_{B}v\> = \theta _1(1) \<  \int _{-\infty} ^{x } \left . \partial _c \phi _c (y)\right |_{c=1} dy , \zeta_{B}v\>  +\frac{\theta _2 (1)}{\theta _3 (1)} \< \eta _2 ,   \zeta_{B}v\> =: I+II.
\end{align*}
Then like in \eqref{eq:mod111}  and  \eqref{eq:disczbv1}
\begin{align}\label {eq:disczbv3}
  |II|\lesssim  |\< \eta _2 [c], \zeta_{B}v\>|+|\< \eta _2   -  \eta _2 [c] ,   \zeta_{B}v\>| \lesssim B ^{-1}  (  \| v  \|_{  { \Sigma }_{1A A_1}} + \| v  \|_{  { \Sigma }_{2A A_1}}  ) .
\end{align}
We have
\begin{align*}
  I&=  \<  \theta _1(1)\int _{-\infty} ^{x } \Lambda _p \phi   (y)  dy     -     \frac{ \theta _1(1)\int _{\R } \Lambda _p \phi   (y)  dy}{   \cancel{\theta _1(c)}\int _{\R } \partial _c \phi _c (y)   dy }  \cancel{\theta _1(c)}  \int _{-\infty} ^{x }   \partial _c \phi _c (y)  dy     , \zeta_{B}v\> \\& =\theta _1(1)\<  \int _{-\infty} ^{x } \Lambda _p \phi   (y)  dy     -
  \frac{\int _{\R } \Lambda _p \phi   (y)  dy}{c^{\frac{2-p}{p-1} -\frac{1}{2}}\int _{\R } \Lambda _p \phi   (y)  dy}
  c^{\frac{2-p}{p-1}  }
  \int _{-\infty} ^{x }   \Lambda _p \phi   (\sqrt{c}y)  dy      , \zeta_{B}v\>
 \\& =  \theta _1(1)\<  \int _{\sqrt{c}x} ^{x } \Lambda _p \phi   (y)  dy        , \zeta_{B}v\> .
\end{align*}
Since now $ | \Lambda _p \phi   (x)| \lesssim e^{-\frac{|x|}{2}}$ then for $x>0$ and for $c >1$
\begin{align*}
  \left |   \int _{\sqrt{c}x} ^{x } \Lambda _p \phi   (y)  dy  \right | \lesssim 2 e^{-\frac{x}{2}}\( 1-e^{- \frac{\sqrt{c}-1}{2} x}\) \le 2 e^{-\frac{x}{2}}  \frac{\sqrt{c}-1}{2} x
\end{align*}
and proceeding similarly there is a $K>0$  such that for $c$ close enough to 1 we have
\begin{align}\label {eq:disczbv4}
  \left |   \int _{\sqrt{c}x} ^{x } \Lambda _p \phi   (y)  dy  \right | \le K e^{-\frac{|x|}{4}} |c-1|  \text{      for all $  x\in \R$}
\end{align}
where we exploit also the fact that the left hand side is even in $x$.
From \eqref{eq:disczbv4}  we conclude  
\begin{align*}
  |I |\lesssim    \delta (  \| v  \|_{  { \Sigma }_{1A A_1}} + \| v  \|_{  { \Sigma }_{2A A_1}}  )
\end{align*}
which, together with \eqref{eq:disczbv3}, yields the following which completes the proof of Lemma \ref{lem:disczbv},
\begin{align*}
 |\<  \eta _1  , \zeta_{B}v\>|& \lesssim \delta  (  \| v  \|_{  { \Sigma }_{1A A_1}} + \| v  \|_{  { \Sigma }_{2A A_1}}  ) .
\end{align*}

\end{proof}

We now focus on the   $w$   in \eqref{eq:defw}. Applying the projection $Q $ in \S \ref{sec:disp} (case $c=1$) to   \eqref{eq:gKdV14}
gives
\begin{align}
   \dot w  &= \mathcal{L}   w + Q [\zeta_{B}, \mathcal{L} ]v \nonumber
\\& + (\dot D -c ) Q  \zeta_{B} v'+
( \dot D -c ) Q \zeta_{B}  \xi _1[c] + \dot c \ Q  \zeta_{B} \xi _2[c]  \label{eq:gKdV22}
\\&
+   Q  \zeta_{B} \(   \mathcal{L}_{[c ]} - \mathcal{L}   \) v   \label{eq:gKdV23}  \\&  - Q  \zeta_{B} \partial _x    \( f( \phi  _{c} + v  )  -  f( \phi  _{c} )-  f'( \phi  _{c} ) v\)     .   \label{eq:gKdV24}
\end{align}
We next consider
\begin{align}\label{eq:expv1}
  w&=e^{  t \mathcal{L} }  w(0) +   \int _{0}^t e^{ (t-s) \mathcal{L} }Q [\zeta_{B}, \mathcal{L} ]v ds  +     \int _{0}^t e^{  (t-s)\mathcal{L} } \text{lines \eqref{eq:gKdV22}--\eqref{eq:gKdV24}} ds. 
\end{align}

\begin{lemma}\label{lem:w0} We have
   \begin{align}\label{eq:w0}
      \| e^{  t \mathcal{L} }  w(0)\| _{L^2 (\R _+, \widetilde{\Sigma})}\le  \sqrt{\delta}.
   \end{align}
\end{lemma}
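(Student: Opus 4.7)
I would prove \eqref{eq:w0} by combining the Pego--Weinstein dispersive estimate \eqref{eq:pwdisp1} with an embedding $L^2_a \hookrightarrow \widetilde{\Sigma}$ and a direct estimate of $\|w(0)\|_{L^2_a}$ via \eqref{eq:estvH1}. Fix $a>0$ small enough that $a<\min(\kappa,\sqrt{1/3})$ and $a\le 1/B$; the first two constraints ensure, respectively, the embedding below and the applicability of Theorem \ref{thm:pwdisp}, while the third is used to control the weight against $\zeta_B$. Since $\sup_{x\in \R}\sech(\kappa x)\,e^{-ax} < \infty$ whenever $0<a<\kappa$, one has the continuous inclusion
\begin{align*}
\|v\|_{\widetilde{\Sigma}} \lesssim \|v\|_{L^2_a}.
\end{align*}

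Applying Theorem \ref{thm:pwdisp} to $w(0)=Qw(0)$ yields
\begin{align*}
\|e^{t\mathcal{L}}w(0)\|_{L^2_a} \lesssim \langle t\rangle^{-1/2}\, e^{-a(1-a^2)t}\,\|w(0)\|_{L^2_a}.
\end{align*}
Squaring this inequality, combining with the embedding, and integrating in time (the resulting integral $\int_0^\infty \langle t\rangle^{-1} e^{-2a(1-a^2)t}\,dt$ is finite thanks to the exponential factor), I obtain
\begin{align*}
\|e^{t\mathcal{L}}w(0)\|_{L^2(\R_+,\widetilde{\Sigma})} \lesssim \|w(0)\|_{L^2_a}.
\end{align*}

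It remains to bound $\|w(0)\|_{L^2_a}$. Because $\xi_i\in L^2_a$ and $\eta_i\in L^2_{-a}$, the spectral projection $P=\sum_{i=1,2}\xi_i\,\langle \eta_i,\cdot\rangle$, and therefore $Q=I-P$, is bounded on $L^2_a$. Hence
\begin{align*}
\|w(0)\|_{L^2_a} \lesssim \|\zeta_B v(0)\|_{L^2_a} = \|e^{ax}\zeta_B v(0)\|_{L^2}.
\end{align*}
Since $\zeta_B(x)=e^{-|x|/B}$ for $|x|\ge 2$ and $a\le 1/B$, the multiplier $e^{ax}\zeta_B(x)$ is uniformly bounded on $\R$, so $\|\zeta_B v(0)\|_{L^2_a}\lesssim \|v(0)\|_{L^2} \lesssim B^{1/2}\delta$ by \eqref{eq:estvH1}. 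Finally, \eqref{eq:relABg} gives $\delta\ll 1/B$, hence $B^{1/2}\delta \le \sqrt{\delta}$, which completes the proof of \eqref{eq:w0}.

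The main obstacle is the coordination of the weight parameter $a$, which has to satisfy the three constraints $a<\kappa$ (for the embedding into $\widetilde{\Sigma}$), $a<\sqrt{1/3}$ (Pego--Weinstein's hypothesis), and $a\le 1/B$ (so that $e^{ax}\zeta_B$ remains bounded on the whole line) simultaneously. A secondary point is the $a$-dependence of the constant $C(a,p)$ in \eqref{eq:pwdisp1}: since we must take $a$ of order $1/B$, this constant could deteriorate as $B$ grows, but the super-exponential smallness of $\delta$ relative to $B$ built into \eqref{eq:relABg} absorbs any polynomial loss comfortably.
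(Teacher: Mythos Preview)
Your proof is correct and follows essentially the same approach as the paper: both use the Pego--Weinstein estimate \eqref{eq:pwdisp1} with the weight parameter $a=1/B$, the embedding $L^2_{1/B}\hookrightarrow\widetilde{\Sigma}$, and the fact that $e^{x/B}\zeta_B$ is bounded to reduce to $\|v(0)\|_{L^2}$. The only cosmetic difference is that the paper splits $w(0)=\zeta_B v(0)-P\zeta_B v(0)$ and bounds each piece in $L^2_{1/B}$ separately, whereas you invoke the boundedness of $Q$ on $L^2_a$ directly; your route is slightly more streamlined but otherwise identical in substance.
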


\begin{proof} By Theorem \ref{thm:pwdisp} we have
\begin{align*}
  \| e^{  t \mathcal{L} }  w(0)\| _{\widetilde{\Sigma}}& \le \| e^{  t \mathcal{L} }  w(0)\| _{L^{2}_{\frac{1}{B}}}
  \lesssim  C( 1/B)\<t\>^{-1/2} e^{-\frac{1}{B}\(1-\frac{1}{B^2}\) t}\|   w(0)\| _{L^{2}_{\frac{1}{B}}}\\& \le  C( 1/B)\<t\>^{-1/2} e^{-\frac{1}{B}\(1-\frac{1}{B^2}\) t}  \(  \| e^{\frac{x}{B} } \zeta_{B}v(0)\| _{L^2} + \| e^{\frac{x}{B} } P \zeta_{B}v(0)\| _{L^2} \)
\end{align*}
From \eqref{eq:estvH1}, we have
\begin{align*}
   \| e^{\frac{x}{B} } \zeta_{B}v(0)\| _{L^2} \lesssim  \|  v(0)\| _{L^2} \lesssim B ^{1/2}\delta
\end{align*}
and  by \eqref{eq:disczbv} similarly
\begin{align*}
   \| e^{\frac{x}{B} } P \zeta_{B}v(0)\| _{L^2} \lesssim  \sum _{i=1}^{2} |  \< \eta _i , \zeta_{B}v(0) \>   | \lesssim B ^{1/2}   \| v(0) \| _{\widetilde{\Sigma}}  \lesssim B   \delta  .
\end{align*}
 Hence the following whose $L^2(\R )$ norm in time,  for $\delta $ very small in terms of $B$, yields   \eqref{eq:w0}
 \begin{align*}
  \| e^{  t \mathcal{L} }  w(0)\| _{\widetilde{\Sigma}}& \lesssim  C( 1/B)\<t\>^{-1/2} e^{-\frac{1}{B}\( 1-\frac{1}{B^2}\) t}  B ^{1/2}\delta  .
\end{align*}

\end{proof}

\begin{lemma}\label{lem:gKdV22} We have
   \begin{align}\label{eq:gKdV221}
     \left  \|  \int _{0}^t e^{  (t-s)\mathcal{L} } \text{line  \eqref{eq:gKdV22}} ds\right \| _{L^2 (I, \widetilde{\Sigma})}\lesssim   \|  \dot D -c \| _{L^2 \( I  \) }+ \|  \dot  c \| _{L^2 \( I  \) }     .
   \end{align}
\end{lemma}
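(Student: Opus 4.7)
The plan is to apply the Kato smoothing estimates of Proposition \ref{prop:smooth} term by term to the three pieces of line \eqref{eq:gKdV22}, pulling the scalar time-dependent coefficients $\dot D - c$ and $\dot c$ outside the spatial $L^1$ norm before invoking the estimates.

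For the two modulation source terms $(\dot D - c) Q \zeta_B \xi_1[c]$ and $\dot c\, Q \zeta_B \xi_2[c]$, the profiles $\xi_1[c] = \phi_c'$ and $\xi_2[c] = \partial_c \phi_c$ are smooth and exponentially decaying, uniformly for $c$ near $1$, so $\|\zeta_B \xi_j[c]\|_{L^1(\R)} \lesssim 1$. Then \eqref{eq:smooth1} should directly give
\begin{align*}
\left\| \int_0^t e^{(t-s)\mathcal{L}} (\dot D - c)(s)\, Q \zeta_B \xi_1[c(s)]\, ds \right\|_{L^2(I, \widetilde{\Sigma})}
\lesssim \|(\dot D - c)\, \zeta_B \xi_1[c]\|_{L^2(I, L^1)}
\lesssim \|\dot D - c\|_{L^2(I)},
\end{align*}
and analogously the $\dot c$ term is bounded by $\|\dot c\|_{L^2(I)}$.

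The only delicate piece is $(\dot D - c) Q \zeta_B v'$, since $v'$ has no a priori small $L^1$ norm in space. My plan is to integrate by parts via the identity
\begin{align*}
\zeta_B v' = \partial_x (\zeta_B v) - \zeta_B' v .
\end{align*}
To the first term I would apply \eqref{eq:smooth11} with $f(s) = (\dot D - c)(s)\, \zeta_B v(s)$, bounding $\|\zeta_B v\|_{L^1} \le \|\zeta_B\|_{L^2} \|v\|_{L^2} \lesssim B \delta$ via \eqref{eq:estvH1}, which yields a contribution $\lesssim B\delta\, \|\dot D - c\|_{L^2(I)}$, much smaller than required. To the second term I would apply \eqref{eq:smooth1}, using $|\zeta_B'| \lesssim B^{-1}\zeta_B$ from \eqref{eq:zetaCestimatederivatives} to gain a factor of $B^{-1}$.

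I do not anticipate any real obstacle here. The terms in \eqref{eq:gKdV22} are linear in the modulation rates, no nonlinear or cancellation argument is required, and the only point of care is the splitting of $\partial_x$ in the $v'$ term, where either the derivative must be absorbed by the smoothing inequality \eqref{eq:smooth11} or a $\zeta_B'$ gain must be exploited. Summing the three pieces should yield \eqref{eq:gKdV221}.
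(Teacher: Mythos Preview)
Your proposal is correct and follows the same strategy as the paper for the two modulation source terms. For the $(\dot D-c)\,Q\zeta_B v'$ term, however, the paper takes a slightly more direct route: it simply applies \eqref{eq:smooth1} to $\zeta_B v'$ itself, using Cauchy--Schwarz $\|\zeta_B v'\|_{L^1}\le \|\zeta_B\|_{L^2}\|v'\|_{L^2}\lesssim B^{1/2}\delta$ (since $v\in H^1$ already controls $v'$ in $L^2$), which suffices because $B^{1/2}\delta\ll 1$. Your integration-by-parts splitting $\zeta_B v'=\partial_x(\zeta_B v)-\zeta_B' v$ and appeal to \eqref{eq:smooth11} is perfectly valid but unnecessary here; the paper reserves that device for Lemma~\ref{lem:gKdV21easy}, where the commutator term $\zeta_B' v''$ genuinely requires absorbing a derivative.
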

\begin{proof}
  Applying Proposition \ref{prop:smooth}   we  conclude immediately
 \begin{align*} &
      \left \| \int _0 ^t    e^{  (t-s) \mathcal{L} }   ( \dot D -c ) Q  \zeta_{B}  \xi _1[c] ds  \right \| _{L^2 \( I ,    \widetilde{\Sigma}\) }\lesssim      \|   ( \dot D -c )   \zeta_{B}  \xi _1[c]  \| _{ L^{ 1, 1} \( \R   ,    L^{ 2}\( I\)\) } \\& \lesssim \|   ( \dot D -c )    \xi _1[c]  \| _{ L^{ 2,2} \( \R   ,    L^{ 2}\( I\)\) } =
        \|   ( \dot D -c )    \xi _1[c]  \| _{ L^{ 2} \( I    ,    L^{ 2,2}\( \R \)\) }\\& \le \|   ( \dot D -c )    \| _{ L^{ 2} \( I  \) } \|        \xi _1[c]  \| _{  L^{ \infty} \( I    ,    L^{ 2,2}\( \R \)\)} \lesssim  \|   ( \dot D -c )    \| _{ L^{ 2} \( I  \) }
  \end{align*}
  and similarly
  \begin{align*}  \\& \left \| \int _0 ^t     e^{  (t-s) \mathcal{L} }   \  \dot c \ Q  \zeta_{B} \xi _2[c] ds  \right \| _{L^2 \( I ,    \widetilde{\Sigma}\) }\lesssim      \|  \dot  c \| _{L^2 \( I  \) }.
  \end{align*}
  Similarly applying Proposition \ref{prop:smooth} and \eqref{eq:estvH1} for the remaining term we have
  \begin{align*} &
      \left \| \int _0 ^t    e^{  (t-s) \mathcal{L} }  (\dot D -c ) Q  \zeta_{B} v' ds  \right \| _{L^2 \( I ,    \widetilde{\Sigma}\) }\lesssim     \|   (\dot D -c )   \zeta_{B} v'  \| _{ L^{ 1, 1} \( \R   ,    L^{ 2}\( I\)\) }
      \\& \lesssim  \| \zeta_{B}  \| _{    L ^{2,1} (\R ) }
       \|  (\dot D -c ) v'\| _{ L^{ 2} \( \R   ,    L^{ 2}\( I\)\) } \lesssim  B ^{\frac{3}{2}}    \|  (\dot D -c )  \| _{   L^{ 2}\( I\)  }   \|   v'\| _{ L^{ \infty} \( I  ,    L^{ 2}\( \R \)\) }\\&  \lesssim B^2 \delta  \|  (\dot D -c )  \| _{ L^{ 2} \( I\)  }  .
  \end{align*}

\end{proof}

\begin{lemma}\label{lem:gKdV23} We have
   \begin{align}\label{eq:gKdV231}
     \left  \|  \int _{0}^t e^{  (t-s)\mathcal{L} } \text{line  \eqref{eq:gKdV23}} ds\right \| _{L^2 (I, \widetilde{\Sigma})}\lesssim  \sqrt{ \delta}  (\| v \| _{L^2(I,  { \Sigma }_{1A A_1}   )}+\| v \| _{L^2(I,  { \Sigma }_{2A A_1}   )}).
   \end{align}
\end{lemma}
\begin{proof}
  We use again Proposition \ref{prop:smooth}  to obtain
  \begin{align*}&
     \left  \|  \int _{0}^t e^{  (t-s)\mathcal{L} }  Q  \zeta_{B} \(   \mathcal{L}_{[c ]} - \mathcal{L}   \) v ds\right \| _{L^2 (I, \widetilde{\Sigma})}\\& \lesssim    \left  \| \zeta_{B}  ( c-1)    v '  \right \| _{ L^{ 1, 1} \( \R   ,    L^{ 2}\( I\)\) } +       \left  \| \zeta_{B} \partial _x   \( \phi ^{p-1}  -\phi ^{p-1}_{c}  \)   v \right \| _{ L^{ 1, 1} \( \R   ,    L^{ 2}\( I\)\) }.
  \end{align*}
  From \eqref{eq:mod2}, we have
  \begin{align*}
      \| c-1\|_  {L^\infty (\R  )}  \left  \| \zeta_{B}  v '\right \| _{L^{ 1, 1} \( \R   ,    L^{ 2}\( I\)\)}  &\lesssim
       \delta  \left  \| \zeta_{2B}  \right \| _{L^{ 2, 1} \( \R   \)}
      \left  \| \zeta_{2B}  v '\right \| _{L^{ 2} \( \R   ,    L^{ 2}\( I\)\)}
      \\& \lesssim   B^{3/2}\delta \left  \| \frac{\zeta_{2B}}{\zeta _A} \zeta _A  (\vartheta _{1A_1} + \vartheta _{2A_1})  v '\right \| _{L^{ 2} \( I   ,    L^{ 2}\( \R \)\)}
       \\&  \lesssim B^{3/2}\delta   (  \| v  \|_{ L^2 (I  , { \Sigma }_{1A A_1})} + \| v  \|_{ L^2 (I  , { \Sigma }_{2A A_1})}  )
  \end{align*}
  and similarly, it is possible to prove the following, which yields the desired inequality in \eqref{eq:gKdV231}
 \begin{align*}
    \left  \| \zeta_{B} \partial _x   \( \phi ^{p-1}  -\phi ^{p-1}_{c}  \)  v \right \| _{L^{ 1, 1} \( \R   ,    L^{ 2}\( I\)\)} \lesssim  \delta   (  \| v  \|_{ L^2 (I  , { \Sigma }_{1A A_1})} + \| v  \|_{ L^2 (I  , { \Sigma }_{2A A_1})}  ) .
 \end{align*}

\end{proof}

Next we have the following. Recall, see Remark \ref{rem:udermain},  that in the proof we are assuming $1<p<2$.

\begin{lemma}\label{lem:gKdV24} We have
   \begin{align}\label{eq:gKdV241}
     \left  \|  \int _{0}^t e^{  (t-s)\mathcal{L} } \text{line  \eqref{eq:gKdV24}} ds\right \| _{L^2 (I, \widetilde{\Sigma})}\lesssim   \delta ^{   \frac{p-1}{2}   } (\| v \| _{L^2(I,  { \Sigma }_{1A A_1}   )}+\| v \| _{L^2(I,  { \Sigma }_{2A A_1}   )}) .
   \end{align}
\end{lemma}
\begin{proof} By Proposition \ref{prop:smooth} the left hand side of \eqref{eq:gKdV241} is up to a fixed constant bounded  above by
\begin{align*}&
   A_1 +   A_2  \text{ where}
\\&
   A_1 =  \left  \| \zeta_{B}   \( f'( \phi  _{c} + v  ) -f' ( \phi  _{c} ) -   f''( \phi  _{c} ) v\)\phi '  _{c}    \right \| _{L^{ 1, 1} \( \R   ,    L^{ 2}\( I\)\)} \text{ and}\\& A_2 =  \left  \| \zeta_{B}   \( f'( \phi  _{c} + v  ) -f' ( \phi  _{c}  \) v'   \right \| _{L^{ 1, 1} \( \R   ,    L^{ 2}\( I\)\)} .
\end{align*}
 We have
\begin{align*}
  A_1&\le\left  \| \zeta_{B}   \( f'( \phi  _{c} + v  ) -f' ( \phi  _{c} ) -   f''( \phi  _{c} ) v\)\phi '  _{c}    \right \| _{L^{1,1}L^2 (\{ |v|\ll \phi  _{c} \})} \\& + \left  \| \zeta_{B}   \( f'( \phi  _{c} + v  ) -f' ( \phi  _{c} ) -   f''( \phi  _{c} ) v\)\phi '  _{c}    \right \| _{L^{1,1}L^2 (\{ |v|\gtrsim  \phi  _{c} \})} =: A_{11}+A_{12}.
\end{align*}
By $\zeta_{2B}=\sqrt{ \zeta_{B}} $, \eqref{eq:relABg}, \eqref{eq:normA}  and \eqref{eq:estvH1}
we have \small
\begin{align*}
       A_{11} &\le  \int _{0}^1 \left  \| \zeta_{B}   \( f''( \phi  _{c} + s v  )  -   f''( \phi  _{c} )  \)  v  \phi '  _{c}    \right \| _{L^{1,1}L^2 (\{ |v|\ll \phi  _{c} \})} ds \\&  \le \int _{[0,1]^2}  \left  \| \zeta_{B}    f'''( \phi  _{c} + \tau s v  )    v ^2  \phi '  _{c}    \right \| _{L^{1,1}L^2 (\{ |v|\ll \phi  _{c} \})} s \ ds d\tau  \\& \lesssim  \left  \| \zeta_{B}    \phi  _{c}  ^{p-3}    v ^2  \phi '  _{c}    \right \| _{L^{1,1}L^2 (\{ |v|\ll \phi  _{c} \})} \lesssim  \left  \| \zeta_{B}       | v |  ^p      \right \| _{L^{1,1} (\R, L^2(I) )}   \\& \le  \left  \| \zeta_{2B}            \right \| _{L^{2,1} (\R ) )}   \left\| \zeta_{2B}       | v |  ^p      \right \| _{L^{2} (\R, L^2(I) )}
          \lesssim B ^{3/2} \left\| \zeta_{2B}       | v |  ^p      \right \| _{L^{2} (I, L^2(\R ) )}
        \\& \le  B ^{3/2}
        \| v \| ^{p-1}_{L^{\infty} (I\times \R ) )}   \left  \|\zeta_{2B}       v      \right \| _{L^2(I, L^2 (\R ))}   \lesssim B^{\frac{p}{2}+1} \delta ^{p-1} \left  \| \frac{\zeta_{2B}}{\zeta_{A} }  \zeta_{A}   (\vartheta _{1A_1} + \vartheta _{2A_1})      v        \right \| _{L^{2} (I, L^2(\R ) )}
   \\& \le   B^{\frac{p}{2}+1} \delta ^{p-1}   \left  \| \zeta_{A}   (\vartheta _{1A_1} + \vartheta _{2A_1})     v        \right \| _{L^2 (\R )}
   \le  B^{\frac{p}{2}  +1  } \delta ^{p-1}(\| v \| _{L^2(I,  { \Sigma }_{1A A_1}   )}+\| v \| _{L^2(I,  { \Sigma }_{2A A_1}   )}) ,
\end{align*}\normalsize
where we used  $|\phi'  _{c}|\lesssim \phi  _{c}$ and here we are using $p<3$. Similarly
\begin{align*}
   A_{12} & \lesssim
   \left  \| \zeta_{B}  |v|^{p }        \right \| _{L^{1,1} (\R, L^2(I) ) }\lesssim B^{\frac{p}{2}  +1  } \delta ^{p-1}(\| v \| _{L^2(I,  { \Sigma }_{1A A_1}   )}+\| v \| _{L^2(I,  { \Sigma }_{2A A_1}   )}).
\end{align*}
We have
\begin{align*}
  A_2&\le   \left  \| \zeta_{B}   \( f'( \phi  _{c} + v  ) -f' ( \phi  _{c}  \) v'   \right \| _{L^{1,1}L^2 (\{ |v|\ll \phi  _{c} \})} \\& +\left  \| \zeta_{B}   \( f'( \phi  _{c} + v  ) -f' ( \phi  _{c}  \) v'   \right \| _{L^{1,1}L^2 (\{ |v|\gtrsim  \phi  _{c} \})} =: A_{21}+A_{22}.
\end{align*}
We have, proceeding like above in the last inequality,
\begin{align*}
   A_{21}&\lesssim \int _0 ^1 \left  \| \zeta_{B}     f''( \phi  _{c} + sv  ) v  v'   \right \| _{L^{1,1}L^2 (\{ |v|\ll \phi  _{c} \})} ds\lesssim  \left  \| \zeta_{B}      \phi  _{c} ^{p-2} v  v'   \right \| _{L^{1,1}L^2 (\{ |v|\ll \phi  _{c} \})} \\& \lesssim  \left  \| \zeta_{B}      |v| ^{  p-1   }  v'   \right \| _{L^{1,1} (\R, L^2(I) )}
   \lesssim  \left  \| \zeta_{2B}            \right \| _{L^{2,1} (\R ) )}   \left  \| \frac{\zeta_{2B}}{\zeta _A}  \zeta_{A}   (\vartheta _{1A_1} + \vartheta _{2A_1})      |v| ^{  p-1   }  v'   \right \| _{L^{2} (\R, L^2(I) )} \\& \lesssim
   B ^{3/2} \| v \| ^{p-1}_{L^{\infty} (I\times \R ) )}(\| v \| _{L^2(I,  { \Sigma }_{1A A_1}   )}+\| v \| _{L^2(I,  { \Sigma }_{2A A_1}   )})\\&
   \lesssim B^{\frac{p}{2}+1} \delta ^{  p-1   }(\| v \| _{L^2(I,  { \Sigma }_{1A A_1}   )}+\| v \| _{L^2(I,  { \Sigma }_{2A A_1}   )})
\end{align*}
and the following, which with the above inequalities and with \eqref{eq:estvH1} yields immediately \eqref{eq:gKdV241}
\begin{align*}
   A_{22}&\lesssim  \| \zeta_{B}      |v| ^{p-1}  v'     \| _{L^{1,1} (\R, L^2(I) )}\lesssim B^{\frac{p}{2}+1} \delta ^{p-1}(\| v \| _{L^2(I,  { \Sigma }_{1A A_1}   )}+\| v \| _{L^2(I,  { \Sigma }_{2A A_1}   )}).
\end{align*}

\end{proof}

The most delicate contributor in the formula \eqref{eq:gKdV22}--\eqref{eq:gKdV24} is the commutator.
We have
\begin{align}&\label{eq:comm}
   [\zeta_{B}, \mathcal{L}] v= 3 \zeta_{B}'v''
        -c \zeta_{B}'v +\zeta_{B}'  f' ( \phi  _{c}    )v + 3 \zeta_{B}''v' +  \zeta_{B}'''v .
\end{align}
The troublesome  term  is  $-c \zeta_{B}'v $, which we set aside for the moment.

\begin{lemma}\label{lem:gKdV21easy} We have
   \begin{align}\label{eq:gKdV21easy1}
     \left  \|  \int _{0}^t e^{  (t-s)\mathcal{L}} Q \( [\zeta_{B}, \mathcal{L}]  + c \zeta_{B}'v\)      ds\right \| _{L^2 (I, \widetilde{\Sigma})}\lesssim   B ^{-1/2}   (\| v \| _{L^2(I,  { \Sigma }_{1A A_1}   )}+\| v \| _{L^2(I,  { \Sigma }_{2A A_1}   )}).
   \end{align}
 \end{lemma}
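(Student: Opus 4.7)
The plan is to compute the commutator $[\zeta_{B},\mathcal{L}]v$ explicitly, then treat each piece with the smoothing estimates \eqref{eq:smooth1}--\eqref{eq:smooth11}, using the derivative gain $|\zeta_{B}^{(k)}|\lesssim B^{-k}\zeta_{B}$ from \eqref{eq:zetaCestimatederivatives} to produce the $B^{-1/2}$ factor. Writing $\mathcal{L}=-\partial_x^3+\partial_x-p\,\partial_x(\phi^{p-1}\cdot)$ and computing directly, we find
\begin{align*}
[\zeta_{B},\mathcal{L}]v = 3\zeta_{B}'\,v''+3\zeta_{B}''\,v'+\zeta_{B}'''\,v-\zeta_{B}'\,v+p\,\zeta_{B}'\,\phi^{p-1}v.
\end{align*}

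The only dangerous term is $3\zeta_{B}'\,v''$, because writing it in $L^1$ directly would lose two derivatives on $v$. The remedy is integration by parts hidden inside the smoothing estimate \eqref{eq:smooth11}: I rewrite
\begin{align*}
3\zeta_{B}'\,v''=\partial_x(3\zeta_{B}'\,v')-3\zeta_{B}''\,v',
\end{align*}
apply \eqref{eq:smooth11} to the first piece and \eqref{eq:smooth1} to the second. In both cases the estimate reduces to controlling quantities of the form $\|\zeta_{B}^{(k)}\,v^{(\ell)}\|_{L^2(I,L^1)}$ with $\ell\le 1$.

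The key elementary conversion, used repeatedly, is
\begin{align*}
\|\zeta_{B}\,w\|_{L^1(\R)} \le \|\zeta_{B}^{1/2}\|_{L^2(\R)}\,\|\zeta_{B}^{1/2}w\|_{L^2(\R)}\lesssim B^{1/2}\,\|\zeta_{A}\,w\|_{L^2(\R)},
\end{align*}
where the last inequality uses $A\gg B$ from \eqref{eq:relABg} to guarantee $\zeta_{B}^{1/2}\lesssim \zeta_{A}$, and then recognises $\|\zeta_{A}\,w\|_{L^2}\lesssim \|v\|_{\Sigma_A}$ for $w=v$ or $w=v'$ by \eqref{eq:normA}. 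Combined with $|\zeta_{B}^{(k)}|\lesssim B^{-k}\zeta_{B}$, this yields
\begin{align*}
\|\zeta_{B}'\,v'\|_{L^1}&\lesssim B^{-1/2}\|v\|_{\Sigma_A},\qquad \|\zeta_{B}''\,v'\|_{L^1}\lesssim B^{-3/2}\|v\|_{\Sigma_A},\\
\|\zeta_{B}'''\,v\|_{L^1}&\lesssim B^{-5/2}\|v\|_{\Sigma_A},\qquad \|\zeta_{B}'\,v\|_{L^1}\lesssim B^{-1/2}\|v\|_{\Sigma_A},
\end{align*}
and the $\zeta_{B}'\phi^{p-1}v$ contribution is even smaller thanks to the exponential decay of $\phi$.

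Plugging these pointwise (in $t$) bounds into \eqref{eq:smooth1}--\eqref{eq:smooth11} and taking the $L^2$ norm in time over $I$, every piece of $\int_0^t e^{(t-s)\mathcal{L}}Q[\zeta_{B},\mathcal{L}]v\,ds$ is bounded in $L^2(I,\widetilde\Sigma)$ by $B^{-1/2}\|v\|_{L^2(I,\Sigma_A)}$, which is \eqref{eq:gKdV21easy1}. The main (and really the only) conceptual obstacle is the $v''$ term, which forces the use of the derivative version \eqref{eq:smooth11} of the smoothing inequality; all remaining terms are purely algebraic manipulations of $\zeta_{B}$-derivatives and a single Cauchy--Schwarz step.
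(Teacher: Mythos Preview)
Your proof is correct and follows essentially the same route as the paper's: compute the commutator explicitly, handle the $v''$ term by writing $\zeta_B' v'' = \partial_x(\zeta_B' v') - \zeta_B'' v'$ and invoking the derivative smoothing estimate \eqref{eq:smooth11}, and bound all remaining pieces directly with \eqref{eq:smooth1} via the Cauchy--Schwarz step $\|\zeta_B w\|_{L^1}\lesssim B^{1/2}\|\zeta_A w\|_{L^2}$. The paper's argument is nearly identical, only slightly terser in the $L^1$-to-$\Sigma_A$ conversion you spell out explicitly.
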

\begin{proof}
We   omit initially the term $3 \zeta_{B}'v''$
in \eqref{eq:comm}  proceeding like above  by   {Proposition} \ref{prop:smooth} 
\begin{align*}&
    \left  \|  \int _{0}^t e^{  (t-s)\mathcal{L}} Q \(\zeta_{B}'  f' ( \phi  _{c}    )v + 3 \zeta_{B}''v' +  \zeta_{B}'''v   \)      ds\right \| _{L^2 (I, \widetilde{\Sigma})}\\&   \lesssim
     \| \zeta_{B}'  f' ( \phi  _{c}    )v + 3 \zeta_{B}''v' +  \zeta_{B}'''v  \| _{L^{ 1, 1} \( \R   ,    L^{ 2}\( I\)\)} \lesssim B ^{-1/2}  (\| v \| _{L^2(I,  { \Sigma }_{1A A_1}   )}+\| v \| _{L^2(I,  { \Sigma }_{2A A_1}   )}) .
  \end{align*}
Writing   $ \zeta_{B}'v'' = \partial _x   ( \zeta_{B}'v')- \zeta_{B}''v' $,   using    for the first time also \eqref {eq:smooth11} we obtain
\begin{align*}&
    \left  \|  \int _{0}^t e^{  (t-s)\mathcal{L}} Q \( \partial _x   ( \zeta_{B}'v')- \zeta_{B}''v'   \)      ds\right \| _{L^2 (I, \widetilde{\Sigma})}\\&   \lesssim
     \|  \zeta_{B}'v'  \| _{L^{ 1 } \( \R   ,    L^{ 2}\( I\)\) } +     \|  \zeta_{B}''v'  \| _{L^{ 1, 1} \( \R   ,    L^{ 2}\( I\)\) }
     \lesssim B ^{-1/2} (\| v \| _{L^2(I,  { \Sigma }_{1A A_1}   )}+\| v \| _{L^2(I,  { \Sigma }_{2A A_1}   )}) .
  \end{align*}

\end{proof}

We will prove the following lemma in   \S \ref{sec:gKdV21hard}.

\begin{lemma}\label{lem:gKdV21hard} We have
   \begin{align}\label{eq:gKdV21hard1}
     \left  \|  \int _{0}^t e^{  (t-s)\mathcal{L}} Q   \zeta_{B}'v      ds\right \| _{L^2 (I, \widetilde{\Sigma})}\lesssim   A_ 1^{3/2}B ^{-1 }    \| v \| _{L^2(I,  { \Sigma }_{1A A_1}   )}+B ^{ 1/2 } \| v \| _{L^2(I,  { \Sigma }_{2A A_1}   )} .
   \end{align}
 \end{lemma}
\qed

\textit{Proof of Lemma \ref{prop:smooth11}.} From   Lemmas \ref{lem:w0}--\ref{lem:gKdV21hard}  and Lemma \ref{lem:lemdscrt}
we have
\begin{align*}
  \| w \| _{L^2(I, \widetilde{\Sigma}   )} \lesssim A_ 1^{3/2}B ^{-1 }    \| v \| _{L^2(I,  { \Sigma }_{1A A_1}   )}+B ^{ 1/2 } \| v \| _{L^2(I,  { \Sigma }_{2A A_1}   )} +\sqrt{\delta} .
\end{align*}
 Using  \eqref{eq:relABg}, decomposition \eqref{eq:defw} and inequality \eqref{eq:disczbv}, we have the following
\begin{align*}
   \| v \| _{L^2(I, \widetilde{\Sigma}   )} &\le      \| (1-\zeta_{B}) v \| _{L^2(I, \widetilde{\Sigma}   )}  + \| \zeta_{B} v \| _{L^2(I, \widetilde{\Sigma}   )}   \\& \lesssim B ^{-1} \(  \| v \| _{L^2(I,  { \Sigma }_{1A A_1}   )} + \| v \| _{L^2(I,  { \Sigma }_{2A A_1}   )}    \) + \| P \zeta_{B}v  \| _{L^2(I, \widetilde{\Sigma}   )} +\|   w \| _{L^2(I, \widetilde{\Sigma}   )}  \\& \lesssim  A_ 1^{3/2}B ^{-1 }    \| v \| _{L^2(I,  { \Sigma }_{1A A_1}   )}+B ^{ 1/2 } \| v \| _{L^2(I,  { \Sigma }_{2A A_1}   )}  +\sqrt{\delta}.
\end{align*}
\qed

We  
completed the proof of Theorem \ref{thm:main} up to {Proposition} \ref{prop:noeigh}, proved in  Section \ref{sec:noeigh},   {Proposition} \ref{prop:smooth2} proved in the next two Sections  \ref{sec:jost} and  \ref{sec:proofsmooth2}, and    to Lemma \ref{lem:gKdV21hard}, proved in  Section \ref{sec:gKdV21hard}.

\section{The Jost functions}\label{sec:jost}

We  look for appropriate solutions of
\begin{align}
  \label{eq:lineq1}  \mathcal{L}_{[c ]}u=\lambda u \text{ for } \lambda \in \C    .
\end{align}
We will consider only case $c=1$ dropping the subscript  $[c ]$. We set
\begin{align}
  \label{eq:C+} {\C} _+:=\{ z\in \C : \Re z > 0     \}.
\end{align}
The third order equation  \eqref{eq:lineq1} has characteristic equation
\begin{equation}\label{eq:mus}
    \mu^3 - \mu + \lambda =0
\end{equation}
which has solutions $\mu_j =\mu _j (\lambda ) $   which depend analytically  on $ \lambda  $, see     \cite[Theorem XII.2] {ReedSimon4}. We can choose them, see Pego and Weinstein \cite{PegoWei2}, so that
\begin{align*}
    \Re \mu_1    < 0 < \Re \mu_2    < \Re \mu_3  \text{ for $0<\Re \lambda< \frac{2}{3\sqrt{3}}$}
\end{align*}
 with for some fixed small $\varepsilon>0$ and $c>0$ \begin{align}
    \label{eq:sum_of_mus}
   & \mu_1    + \mu_2   + \mu_3  =0    \\&  \Re  \mu _1 (\lambda ) \le -c  <0  \text{ and } \Re \( \mu  _{i+1} -\mu  _i\)     \ge c   \text{   for all $i=1,2$ and }  |\Re \lambda| \le   \varepsilon. \label{eq:relmus}
\end{align}
We  have, see \cite{PegoWei2},
\begin{align}  \label{eq:munear0}&
   \mu _1 = -1+O(\lambda) \text{ , }  \mu _2 =  {\lambda} +O(\lambda ^3)    \text{ , }\mu _3 = 1+O(\lambda) \text{ for $\lambda $ near 0 and}\\& \mu _j =(-\lambda )^{\frac{1}{3}} +o \( |\lambda| ^{-\frac{1}{3}} \) \text{ for }\lambda \to \infty . \label{eq:muhighen}
\end{align}
Using $\lambda=-\mu_2 ^3+ \mu_2 $, we can rewrite \eqref{eq:mus} as
    \begin{align*}
        (\mu-\mu_2 )(\mu^2+\mu_2 \mu+\mu_2 ^2-1)=0.
    \end{align*}
    Thus, the other two solutions are given by
    \begin{align*}
        -\frac{1}{2}\mu_2  \pm \sqrt{1-\frac{3}{4}\mu_2 ^2}
    \end{align*}
    and in particular in the case  $\lambda =\im k(k^2+1) $ and   $\mu_2( \lambda ) =\im k$ with $k\in \R$
    we have
    \begin{align}\label{eq:detmu13}
      \mu _1=     - \sqrt{1+\frac{3}{4}k ^2} -\im \frac{k }{2}  \text{ and }  \mu _3=    \sqrt{1+\frac{3}{4}k ^2} -\im \frac{k }{2}  .
    \end{align}
Given three scalar functions $y_j$ with $j=1,2,3$ we consider the Wronskian
    \begin{align}\label{eq:wronskdef}
      W[y_1,y_2,y_3]:=  \det \left(
                                 \begin{array}{ccc}
                                   y_1 & y_2 & y_3 \\
                                   y_1' & y_2' & y_3' \\
                                   y_1 ''& y_2 ''& y_3 ''\\
                                 \end{array}
                               \right).
    \end{align}
 For   the functions $y_j = e^{\mu_j(\lambda)  x} $   we set
\begin{equation}\label{eq:vandermonde}
    W _0\(\lambda\) :=
    W[y_1,y_2,y_3]
      = \prod _{1\leq i < j\leq 3} (\mu_i - \mu_j )    .
\end{equation}
By the variation of parameter formulas  the solutions of the nonhomogeneous equation
\begin{align}\label{eq:nonhom}
  y'''-y'+\lambda y=F
\end{align}
are \begin{align}\label{eq:solution_nonhomogeneous_equation}
   & y =
    \sum_{ {j}=1,2,3 } a_ {j}  e^{\mu_ {j}  x}\int \ e^{-\mu_ {j}  x } F(x)   d x  \ \text{ where} \\& \nonumber    a_1 = \frac{ 1}{(\mu _1-\mu _2)(\mu _1-\mu _3) }  \text{ , }
        a_2   =  \frac{1 }{(\mu _2-\mu _1)(\mu _2-\mu _3)} \text{ and  }
        a_3 = \frac{1 }{(\mu _1-\mu _3)(\mu _2-\mu _3)} \ .
\end{align}
 A simple computation  yields the identity
    \begin{equation}\label{eq:sum_aj}
        \sum_{ {j}=1,2,3} a_ {j}  \equiv 0 .
    \end{equation}
The solutions  of \eqref{eq:lineq1}  are of the form \eqref{eq:solution_nonhomogeneous_equation}  for $F=-\(   f' ( \phi  _{c}    )y   \) '$ .
Since
\begin{align*}
   -a_ {j}  e^{\mu_ {j}  x}\int \ e^{-\mu_ {j}  x } \(   f' ( \phi   )y   \) '    d x  =  -a_ {j} f' ( \phi    )y  -  a_ {j}\mu_ {j} e^{\mu_ {j}  x }\int \ e^{-\mu_ {j}  x }    f' ( \phi    )y   d x,
\end{align*}
summing on $j$ and  using   \eqref{eq:sum_aj}  we have
\begin{align}
    & y =-
    \sum_{ {j}=1,2,3 } a_ {j}\mu_ {j} e^{\mu_ {j}  x }\int \ e^{-\mu_ {j}  x }    f' ( \phi    )y   d x . \label{eq:solnonhom}
\end{align}
The first Jost function of \eqref{eq:lineq1} is provided by the following, proved  like \cite[Lemma 1 p. 130]{DT1979}.
\begin{lemma}\label{lem:estimate_f1}
    There exists a  solution  $f_1(x , \lambda ) = e^{\mu_1 x} m_1(x , \lambda )$    of  \eqref{eq:lineq1}
    such that for any $\alpha, \beta \ge 0$  there is a constant $C_{\alpha\beta}$ such that   for all $\lambda \in \C $ such that $|\Re \lambda| \le \varepsilon  $   and $x\in \R$
    \begin{equation}\label{eq:m1_estimate_partial}
        \left |     \partial_\lambda^\beta \partial_x^{\alpha} \(   m_1(x , \lambda ) -1 \)  \right |  \leq C_{\alpha, \beta}  \<\lambda \> ^{-\frac{1}{3}   -\beta  }         \int _{x}^{+\infty }f' (\phi   (x') ) dx' \text{ . }
    \end{equation}

\end{lemma}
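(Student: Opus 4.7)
The plan is to adapt the classical Jost-function construction (as in Deift--Trubowitz \cite{DT1979}) to our third-order operator. I would first derive a Volterra integral equation for $m_1$ from the variation-of-parameters representation \eqref{eq:solnonhom}: setting $f_1=e^{\mu_1 x} m_1$ and using $\Re(\mu_j-\mu_1)\ge 0$ for all $j$ (strict for $j=2,3$ by \eqref{eq:relmus}) when $\Re\lambda\ge -\varepsilon_1$, each integral should be taken from $x$ to $+\infty$ so that the resulting equation is of Volterra type and the value $1$ appears as the asymptote at $+\infty$:
\begin{equation*}
    m_1(x,\lambda) = 1 - \sum_{j=1,2,3} a_j\mu_j \int_x^{+\infty} e^{(\mu_j-\mu_1)(x-x')} f'(\phi(x'))\, m_1(x',\lambda)\, dx'.
\end{equation*}

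Two uniform estimates for $\Re\lambda\ge -\varepsilon_1$ then form the core of the argument: the coefficient bound $|a_j\mu_j|\le C \<\lambda\>^{-1/3}$, which follows by combining \eqref{eq:munear0} near $\lambda=0$ (where the $\mu_j$'s remain separated) with the high-frequency asymptotics \eqref{eq:muhighen} (where $\mu_j\sim|\lambda|^{1/3}$ and $|a_j|\sim|\lambda|^{-2/3}$), plus \eqref{eq:relmus} to handle intermediate $\lambda$; and the kernel bound $|e^{(\mu_j-\mu_1)(x-x')}|\le 1$ for $x\le x'$, together with exponential decay of effective width $\<\lambda\>^{-1/3}$ for $j=2,3$ at large $|\lambda|$. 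Solving by Picard iteration then yields the factorial bound
\begin{equation*}
    |m_1^{(n+1)}(x)-m_1^{(n)}(x)| \le \frac{(C \<\lambda\>^{-1/3})^{n+1}}{(n+1)!} \left(\int_x^{+\infty} f'(\phi(x'))\, dx'\right)^{n+1},
\end{equation*}
whose summation (absorbing the uniformly bounded factor $\exp(C\<\lambda\>^{-1/3}\|f'(\phi)\|_{L^1})$) gives the case $\alpha=\beta=0$ of \eqref{eq:m1_estimate_partial}.

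For the derivatives I would differentiate the integral equation. For $\partial_x$ the key observation is the Vandermonde-type identity $\sum_j a_j\mu_j=0$ (analogous to \eqref{eq:sum_aj}, obtained by partial fractions on $\mu/\prod_i(\mu-\mu_i)$), which kills the boundary term produced by differentiating under the integral and leaves a Volterra equation with new coefficients $a_j\mu_j(\mu_j-\mu_1)$; the $j=1$ term drops out, and for $j=2,3$ the extra factor $\mu_j-\mu_1$ is absorbed by the narrower $\<\lambda\>^{-1/3}$ effective support of the kernel. For $\partial_\lambda$, differentiation of the kernel produces $\partial_\lambda\mu_j=-(3\mu_j^2-1)^{-1}=O(\<\lambda\>^{-2/3})$ together with polynomial factors $(x-x')^k$ that are absorbed by the same effective support, contributing an extra $\<\lambda\>^{-1/3}$ per derivative. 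An induction on $\alpha+\beta$, with the equation for $\partial_\lambda^\beta\partial_x^\alpha m_1$ obtained by differentiation of the integral identity and solved by the same Volterra scheme, then yields \eqref{eq:m1_estimate_partial} in full.

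In my view, the main obstacle is the uniform-in-$\lambda$ analysis: ensuring that the coefficient and kernel estimates combine to give precisely $\<\lambda\>^{-1/3-\beta}$ across all regimes. The delicate parts are the transition near $\lambda=0$ (where $\mu_2\to 0$ while $\mu_1,\mu_3$ stay well-separated, so the interplay of the three terms must be tracked carefully) and the regime of very large $|\lambda|$ (where all $\mu_j$ scatter at rate $|\lambda|^{1/3}$ and the Volterra kernel concentrates on the scale $\<\lambda\>^{-1/3}$ that provides the decay in both $\alpha$ and $\beta$).
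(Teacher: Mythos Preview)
Your proposal is essentially the same as the paper's: set up the Volterra equation for $m_1$, use the coefficient bound $\sum_j|a_j\mu_j|\le C_*\<\lambda\>^{-1/3}$, solve by Picard iteration for the base case, then differentiate the integral equation and induct on $\alpha+\beta$; the $\partial_\lambda$ step is handled exactly as you describe (the paper uses the same bound $|\partial_\lambda(\mu_j-\mu_1)/(\mu_j-\mu_1)|\lesssim\<\lambda\>^{-1}$).

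The one execution difference worth noting is your treatment of $\partial_x$. You kill the boundary term with the identity $\sum_j a_j\mu_j=0$ (which is correct, and indeed makes $q(0,\lambda)=0$) and then work with the differentiated kernel $\partial_x q$, relying on ``effective support $\<\lambda\>^{-1/3}$'' to absorb the extra factor $|\mu_j-\mu_1|\sim\<\lambda\>^{1/3}$. This can be made to work, but as stated it gives $\<\lambda\>^{-1/3}f'(\phi(x))$ rather than $\<\lambda\>^{-1/3}\int_x^{+\infty}f'(\phi)$ directly, so you would still need the elementary observation $f'(\phi(x))\lesssim\int_x^{+\infty}f'(\phi)$ to match the statement. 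The paper instead integrates by parts: writing $\partial_x q(x-y)=-\partial_y q(x-y)$ and transferring the $\partial_y$ onto $f'(\phi(y))m_1(y)$, the boundary contributions cancel and one obtains a genuine Volterra equation for $\partial_x m_1$ with the \emph{same} kernel $q$ and an inhomogeneous term $\int_x^{+\infty}q(x-y)f''(\phi(y))\phi'(y)m_1(y)\,dy$. Since $|f''(\phi)\phi'|\le C_0\,f'(\phi)$, the Picard machinery applies verbatim and yields the stated bound without any separate kernel-concentration argument. Both routes are valid; the paper's is slightly more economical for the precise form of \eqref{eq:m1_estimate_partial}.
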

 \begin{proof} We seek a solution  to
     \begin{equation*}
          f_1(x , \lambda )  = e^{\mu_1 x} +  \sum_{ {j}=1,2,3 } a_ {j}  \mu _ {j}   e^{\mu_ {j}  x}  \int_x^{+\infty} e^{-\mu_{ {j}}  y}  f' ( \phi    (y)   ) f_1 (y , \lambda )  d y
    \end{equation*}
    or equivalently
    \begin{equation}\label{eq:integral_eq_m1}
          m_1(x , \lambda )  = 1 +       \int_x^{+\infty} q  (  x -y , \lambda)   f' ( \phi   (y)   ) m_1 (y , \lambda )  d y
    \end{equation}
    where
\begin{align*}
         q  (  x , \lambda  ) =   \sum_{ {j}=1,2,3 } a_ {j}  \mu _ {j}      e^{(\mu_{ {j}}   -\mu_{1} ) x}    .
     \end{align*}
  The solution of \eqref{eq:integral_eq_m1} can be expressed like  in Deift and Trubowitz \cite[Lemma 1]{DT1979} as
    \begin{equation}\label{eq:m1_sum}
        m_1(x , \lambda )= \sum_{n=0}^\infty   m_1(x , \lambda )_{[n]} \ ,
    \end{equation}
    where
    \begin{equation}
    \label{eq:Volterra_integral}
    \begin{aligned}
        &m_1(x , \lambda )_{[0]}   = 1 \ ,
        \\&
         m_1(x , \lambda )_{[n+1]}
         =      \int_x^{+\infty}  q (  x -y , \lambda)  f' ( \phi   (y)   ) m_1 (y , \lambda ))_{[n]}  d y  .
    \end{aligned}
    \end{equation}
   By   \eqref{eq:muhighen}  and  \eqref{eq:solution_nonhomogeneous_equation}  there exists a constant $C_*>0$ such that \begin{equation}\label{eq:estamu}
        \sup _{\lambda \in \overline{\C _+}} \sum _{ {j}=1,2,3} | a_ j \mu_j |  \le   C_*   \<\lambda \> ^{-\frac{1}{3}}.
    \end{equation}
 Then  like in \cite[Lemma 1]{DT1979} we obtain
     \begin{align*}
     \left |  m_1(x , \lambda )_{[n ]} \right | &\le     C_* ^n   \<\lambda \> ^{- \frac{n}{3}}      \int_{x\leq x_1\leq \ldots \leq x_n} | f' ( \phi   (x_1)   )   |   \ldots | f' ( \phi   (x_n)   )   |  d x_n \ldots  d x_1   \\& = \frac{1}{n!}  C_* ^n   \<\lambda \> ^{- \frac{n}{3}} \(  \int_x^\infty  | f' ( \phi  (y)   )   |   dy   \) ^n
  \end{align*}
    which yields  \eqref{eq:m1_estimate_partial} for $\alpha =\beta =0$. Differentiating \eqref{eq:integral_eq_m1} and integrating by parts  we obtain
    \begin{align}\nonumber
        \partial _x   m_1(x , \lambda )&  = -  q  (  0 , \lambda)  f' ( \phi  (x)   )   m_1 (x , \lambda ) + \int_x^{+\infty} \partial _x q  (  x -y , \lambda)   f' ( \phi  (y)   )   m_1 (y , \lambda )  d y
        \\& =   m_1'(x , \lambda ) _{[0]}   +      \int_x^{+\infty} q  (  x -y , \lambda)   f' ( \phi   (y)   )\partial _y  m_1 (y , \lambda )  d y   \text{ where}\label{eq:derm11}
       \\   m_1'(x , \lambda ) _{[0]}&:=      \int_x^\infty q  (  x -y , \lambda)   f'' ( \phi    (y)   ) \phi  ' (y)  m_1 (y , \lambda )  d y.\nonumber 
    \end{align}
This can be solved as \eqref{eq:m1_sum} as a sum
    \begin{align*}
    \partial _x m_1(x , \lambda )= \sum_{n=0}^\infty   m_1'(x , \lambda )_{[n]}
    \end{align*}
    where  for $n\ge 0$
     \begin{equation}
    \nonumber
    \begin{aligned}
         m_1'(x , \lambda )_{[n+1]}
         =
              \int_x^{+\infty} q  (  x -y , \lambda)   f' ( \phi   (y)   ) m_1 '(y , \lambda ))_{[n]}  d y  .
    \end{aligned}
    \end{equation}
    Then
    \begin{align*}
      \left |  m_1'(x , \lambda )_{[0 ]} \right | &\le   \| m_1 (\cdot  , \lambda )\| _{L^\infty (\R )}    C_*
      \<\lambda \> ^{- \frac{1}{3}}    \int _x ^{+\infty}| f'' ( \phi   (y)   ) \phi   ' (y)| dy
    \end{align*}
    and   for $n\ge 1$  and   using that there exists $C_0>0 $ such that $| f'' ( \phi   (y)   ) \phi   ' (y)|\le C_0| f'  ( \phi   (y)   )  |$
    we have    \small
    \begin{align*}
     \left |  m_1'(x , \lambda )_{[n ]} \right | &\le         C_*  ^{n}   \<\lambda \> ^{- \frac{n }{3}}  \|m'_1(\cdot,\lambda)_{[0]}\|_{L^\infty}    \int_{x\leq x_1\leq \ldots \leq x_n} | f' ( \phi  (x_1)   )   |     \ldots | f' ( \phi  _{c} (x_n)   )   |  d x_n \ldots  d x_1   \\& =   \frac{1}{n!}C_0     C_*  ^{n+1}   \<\lambda \> ^{- \frac{n +1}{3}}    \(  \int_x^\infty  | f' ( \phi (y)   )   |   dy   \) ^{n+1}.
  \end{align*}
    \normalsize
   Then we obtain  \eqref{eq:m1_estimate_partial} for $\alpha =1$ and $\beta  =0$. Applying $\partial _\lambda $ to  \eqref{eq:integral_eq_m1} we obtain
     \begin{align*}
       \partial _\lambda m_1(x , \lambda ) & =      \int_x^\infty        \partial _\lambda q  (  x -y , \lambda) f' ( \phi   (y)   )   m_1 (y , \lambda )  d y \\& +         \int_x^\infty q  (  x -y , \lambda) f' ( \phi   (y)   )     \partial _\lambda m_1 (y , \lambda )  d y .
     \end{align*}
Then
   \begin{align*}
    \partial _\lambda m_1(x , \lambda )= \sum_{n=0}^\infty  \dot  m_1 (x , \lambda )_{[n]}
    \end{align*}
     where
     \begin{equation}
    \nonumber
    \begin{aligned}
        &   \dot m_1  (x , \lambda ) _{[0]}   =    \int_x^\infty  \partial _\lambda q  (\lambda , x-y )   f' ( \phi   (y)   ) m_1 (y , \lambda )  d y \text{ and}
        \\&
        \dot  m_1 (x , \lambda )_{[n+1]}
         =   \int_x^\infty q  (  x -y , \lambda) f' ( \phi   (y)   ) \dot  m_1 (y , \lambda ))_{[n]}  d y  .
    \end{aligned}
    \end{equation}
    We have for some constant  $C_1>0$
    \begin{align*}  &| \dot m_1  (x , \lambda ) _{[0]}|  \le | \partial _\lambda  (a_1\mu _1)           |    \| m_1 (\cdot  , \lambda )\| _{L^\infty (\R )}   \int_x^\infty      f' ( \phi   (y)   )   d y  \\&  +   \| m_1 (\cdot  , \lambda )\| _{L^\infty (\R )}  \sum_{ {j}=  2,3 }   \int_x^\infty   \left |     a_ {j}  \mu _ {j}   \partial _\lambda\(e^{-(\mu_{ {j}}   -\mu_{1} ) (x-y)}  \)      \right  |     f' ( \phi  (y)   )   d y    \\& \le C _1 \< \lambda \> ^{- \frac{4}{3}  }  \int_x^\infty      f' ( \phi    (y)   )   d y
    \end{align*}
  where we used
  \begin{align} &
   \nonumber  | \partial _\lambda ^{\alpha } (a_1\mu _1)           | \lesssim  \< \lambda \> ^{- \frac{1}{3} -\alpha  } \text{    for   all $\alpha \ge 0$ and  $j=1,2,3$ and } \\&   \nonumber \left  |   (x-y)    e^{(\mu_{ {j}}   -\mu_{1} ) (x-y)}   \partial _\lambda  (\mu_{ {j}}   -\mu_{1} )       \right |
     \lesssim
     \left  |  (\mu_{ {j}}   -\mu_{1} )  (x-y)    e^{(\mu_{ {j}}   -\mu_{1} ) (x-y)}   \frac{\partial _\lambda  (\mu_{ {j}}   -\mu_{1} ) }{\mu_{ {j}}   -\mu_{1}}     \right | \\& \lesssim    \left  |    \frac{\partial _\lambda  (\mu_{ {j}}   -\mu_{1} ) }{\mu_{ {j}}   -\mu_{1}}     \right | \lesssim   \< \lambda \> ^{-1  } \text{ and  for   for all $y>x$}\label{eq:expdl}
  \end{align}
   exploiting \eqref{eq:muhighen}  and the following fact, which also follows from \eqref{eq:muhighen},
   \begin{align*}
     0 <\Re  (\mu_{ {1}}   -\mu_{j} )\approx |\mu_{ {1}}   -\mu_{j}| .
   \end{align*}
   For $n\ge 1$ we have   for a constant $C_2>0$
   \begin{align*}
     \left |  \dot m_1 (x , \lambda )_{[n ]} \right | &\le  C _2\< \lambda \> ^{- \frac{4}{3}  }          C_*  ^{n }   \<\lambda \> ^{- \frac{n  }{3}}      \int_{x\leq x_1\leq \ldots \leq x_n} | f' ( \phi  (x_1)   )   |     \ldots | f' ( \phi    (x_n)   )   |  d x_n \ldots  d x_1   \\& =  \frac{1}{n!}  C _2\< \lambda \> ^{- \frac{4}{3}  }        C_*  ^{n }   \<\lambda \> ^{- \frac{n  }{3}}    \(  \int_x^\infty  | f' ( \phi (y)   )   |   dy   \) ^n .
  \end{align*}
   We hence get case $\alpha =0$ and $\beta =1$.
 For the general case we proceed by induction.
   Differentiating     \eqref{eq:integral_eq_m1} and integrating by parts   we obtain
   \begin{align*}
      \partial_x^{\alpha} \partial_\lambda^\beta  m_1(x , \lambda )  =     \widetilde{m}_1  (x , \lambda ) _{[0]} +     \int_x^{+\infty} q  (  x -y , \lambda)   f' ( \phi    (y)   )  \partial_y^{\alpha} \partial_\lambda^\beta m_1 (y , \lambda )  d y
   \end{align*}
   with the first term on the right of the form
   \begin{align}\label{eq:m1dergen}
     \widetilde{m}_1  (x , \lambda ) _{[0]}  &= \sum a_{\beta _1\beta_2}^{\alpha  _1\alpha _2}      \int_x^{+\infty} \partial _{\lambda } ^{ \beta _1 }   q  (  x -y , \lambda)   \partial _{y } ^{ \alpha _1 }  \(  f' ( \phi (y)   \)   \partial_y^{\alpha _2} \partial_\lambda^{\beta _2}  m_1 (y , \lambda )  d y,
   \end{align}
   where   $\alpha  _1+\alpha  _2=\alpha  $,    $\beta _1+\beta _2=\beta $  and $\alpha  _2 +\beta _2<  \alpha    +\beta$   and where the $a_{\beta _1\beta_2}^{\alpha  _1\alpha _2}$ are fixed constants. Formula \eqref{eq:m1dergen} can be obtained in elementary fashion by induction on $\alpha$ proceeding like for
   \eqref{eq:derm11}.
    We have
   like above  by induction
   \begin{align*}
     \left |    \partial _{\lambda } ^{ \beta _1 }   q_1 (  x -y , \lambda)   \partial _{y } ^{ \alpha _1 }  \(  f' ( \phi   (y)   \)   \partial_y^{\alpha _2} \partial_\lambda^{\beta _2}  m_1 (y , \lambda ) \right | \lesssim     \<\lambda \> ^{-\frac{1}{3} -  \beta  }  | \partial _{y } ^{ \alpha _1 }  \(  f' ( \phi  (y)   \)|
   \end{align*}
   so that  we get the following   which, fed in the Volterra equation resolution mechanism, yields   \eqref{eq:m1_estimate_partial}
   \begin{align*}
     \left |   \widetilde{m}_1  (x , \lambda ) _{[0]}  \right | \le  C ' _{\alpha\beta}      \<\lambda \> ^{-\frac{1}{3} -  \beta  }  \sum _{j=0}^{\alpha} \int _x ^{+\infty}    | \partial _{y } ^{ \alpha   }  \(  f' ( \phi  (y)   \)| dy  ,
   \end{align*}
   where we remark that this last integral can be bounded in terms on the integral in     \eqref{eq:m1_estimate_partial}.
 \end{proof}

The second  Jost function   of \eqref{eq:lineq1} is    obtained similarly.
\begin{lemma}\label{lem:estimate_f3}
    There exists a  solution  $f_3(x , \lambda ) = e^{\mu_3 x} m_3(x , \lambda )$    of  \eqref{eq:lineq1}
    such that for any $\alpha, \beta \ge 0$  there is a constant $C_{\alpha\beta}$ such that   for all   $\lambda \in \C $ such that $|\Re \lambda| \le \varepsilon  $  and $x\in \R$
    \begin{equation}\label{eq:m3_estimate_partial}
        \left |     \partial_x^{\alpha}    \partial_\lambda^\beta \(   m_3(x , \lambda ) -1 \)  \right |  \leq C_{\alpha, \beta}  \<\lambda \> ^{-\frac{1}{3}   -\beta  }         \int _{-\infty }^{x }f' (\phi   (x') ) dx' \text{  .}
    \end{equation}

\end{lemma}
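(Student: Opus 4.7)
The plan is to mirror the argument for Lemma \ref{lem:estimate_f1}, but with the direction of the Volterra integration reversed, so that the base solution is $e^{\mu_3 x}$ (which is the one growing slowest from $-\infty$). Starting from the variation of parameters identity \eqref{eq:solnonhom}, I would look for $f_3$ in the form
\begin{equation*}
f_3(x,\lambda) = e^{\mu_3 x} - \sum_{j=1,2,3} a_j \mu_j e^{\mu_j x} \int_{-\infty}^{x} e^{-\mu_j y} f'(\phi(y)) f_3(y,\lambda)\,dy ,
\end{equation*}
and then set $f_3 = e^{\mu_3 x} m_3$, which reduces the problem to the integral equation
\begin{equation*}
m_3(x,\lambda) = 1 + \int_{-\infty}^{x} \widetilde{q}(x-y,\lambda)\, f'(\phi(y))\, m_3(y,\lambda)\, dy, \qquad \widetilde{q}(z,\lambda) = -\sum_{j=1,2,3} a_j \mu_j\, e^{(\mu_j-\mu_3) z}.
\end{equation*}
Since for $z = x-y \ge 0$ we have $\Re(\mu_j - \mu_3) \le 0$ for $j=1,2$ by \eqref{eq:relmus}, and $\mu_3 - \mu_3 = 0$, the kernel $\widetilde{q}(z,\lambda)$ is uniformly bounded on the integration region; more precisely, $|\widetilde{q}(z,\lambda)| \lesssim \sum_j |a_j \mu_j| \lesssim \<\lambda\>^{-1/3}$ by \eqref{eq:estamu}.

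Next I would solve the Volterra equation by iteration, exactly as in \eqref{eq:m1_sum}--\eqref{eq:Volterra_integral}: define
\begin{equation*}
m_3(x,\lambda)_{[0]} = 1, \qquad m_3(x,\lambda)_{[n+1]} = \int_{-\infty}^{x} \widetilde{q}(x-y,\lambda)\, f'(\phi(y))\, m_3(y,\lambda)_{[n]}\, dy,
\end{equation*}
and use the same $n!$ combinatorial bound with $\int_{-\infty}^{x} f'(\phi(y))\,dy$ in place of $\int_{x}^{+\infty} f'(\phi(y))\,dy$ (this integral is finite because $f'(\phi)$ decays exponentially as $x \to -\infty$). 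Convergence of $\sum_{n} m_3(\cdot,\lambda)_{[n]}$ then yields the case $\alpha = \beta = 0$ of \eqref{eq:m3_estimate_partial}.

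For $\alpha \ge 1$ and $\beta \ge 1$, I would proceed by induction on $\alpha + \beta$ exactly as in the proof of Lemma \ref{lem:estimate_f1}: differentiating the integral equation, integrating by parts in the $\partial_x$ derivatives, and deriving a Volterra equation for $\partial_x^\alpha \partial_\lambda^\beta m_3$ with the same kernel $\widetilde{q}$ and an inhomogeneous term analogous to \eqref{eq:m1dergen}. The key pointwise estimate on $\lambda$-derivatives of $\widetilde{q}$ requires the analog of \eqref{eq:expdl}:
\begin{equation*}
|(x-y)\, e^{(\mu_j-\mu_3)(x-y)}\, \partial_\lambda(\mu_j-\mu_3)| \lesssim \left|\frac{\partial_\lambda(\mu_j-\mu_3)}{\mu_j-\mu_3}\right| \lesssim \<\lambda\>^{-1}, \quad j=1,2,
\end{equation*}
and this holds by \eqref{eq:muhighen} together with $0 < -\Re(\mu_j-\mu_3) \approx |\mu_j-\mu_3|$ for $j=1,2$, which again follows from \eqref{eq:relmus}--\eqref{eq:muhighen}. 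The overall structure of the induction and of the combinatorial Volterra bound is then identical to what is done for $m_1$.

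I do not expect a real obstacle here: the only point requiring any care is checking that the exponential factors in $\widetilde{q}(x-y,\lambda)$ are bounded for $y \le x$ rather than $y \ge x$, which is precisely why $\mu_3$ (the largest real part) plays here the role played by $\mu_1$ (the smallest real part) in Lemma \ref{lem:estimate_f1}.
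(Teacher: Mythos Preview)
Your proposal is correct and follows exactly the approach of the paper: the paper's proof consists of the single remark that the argument is the same as for Lemma~\ref{lem:estimate_f1}, together with the integral equation \eqref{eq:integral_eq_m3}, which is precisely your equation for $m_3$ with kernel $\widetilde{q}(z,\lambda)=-\sum_j a_j\mu_j e^{(\mu_j-\mu_3)z}$. Everything you outline (the Volterra iteration, the $n!$ bound, and the inductive treatment of $\partial_x^\alpha\partial_\lambda^\beta$ derivatives via the analog of \eqref{eq:expdl}) is the intended argument.
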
  \begin{proof}
 The proof is similar to that of  Lemma \ref{lem:estimate_f1}. We only remark   that  here the integral equation is
 \begin{equation}\label{eq:integral_eq_m3}
          m_3(x , \lambda )  = 1 -   \sum_{ {j}=1,2,3 } a_ {j}  \mu _ {j}     \int_ {-\infty} ^xe^{ (\mu_{ {j}}   -\mu_{3} ) (x-y)}  f' ( \phi   (y)   ) m_3 (y , \lambda )  d y  .
    \end{equation}
\end{proof}

Observe that for $\lambda 	\in 	\im \R $  we have that $f_{1}(-x, \lambda  )$ solves equation \eqref{eq:lineq1}  with $\lambda$ replaced by $-\lambda$. Then, by the asymptotics, we conclude
\begin{align} \label{eq:symmetryf13}
  f_{1}(-x, \lambda )=f_{3}( x, -\lambda ) \text{ for all $x\in \R$ and $\lambda \in \im \R$.}
\end{align}
A third Jost function   of \eqref{eq:lineq1} is the following one. As we will see later, there is no as good control globally for $x\in \R$ of this function.
\begin{lemma}\label{lem:estimate_tildef2}
There exists a Jost function $ \tilde{f}_2(x , \lambda )= e^{\mu_2x} \tilde{m}_2(x , \lambda ) $  and $ x_0\in \R $ such that  for all $\lambda \in \C $ such that $\Re \lambda \ge -\varepsilon_1  $ we have
\begin{align}\label{eq:estimate_tildef21}&
\|  \tilde{m}_2(\cdot  , \lambda )\| _{L^\infty\( (  -\infty, x_0 )  \) } \le 2
\text{ and }\\&
\lim_{x\to-\infty} \tilde{m}_2(x , \lambda ) =1 \ .\label{eq:estimate_tildef22}
\end{align}
There are  constants $C_{\alpha\beta} = C_{\alpha\beta} (c,x_0)$ such that for $\alpha +\beta \ge 1  $
\begin{align} \label{eq:estimate_tildef232}
  \|  \partial_x^{\alpha} \partial_\lambda^\beta   \tilde{m}_2(\cdot  , \lambda )\| _{L^\infty\( (  -\infty, x_0 )  \) } \le C_{\alpha, \beta}  \<\lambda \> ^{-\frac{1}{3}   -\beta  } \text{  } .
\end{align}
Finally, there are constants  $C_{\alpha\beta} = C_{\alpha\beta} ( x_0)$ such that for $x\le x_0$ and for     $\Re \lambda \ge -\varepsilon_1  $
\begin{align}\label{eq:estimate_tildef233} &  |  \partial_x^{\alpha} \partial_\lambda^\beta\( \tilde{m}_2(x  , \lambda ) -1\)    |\le C_{\alpha\beta}   \<\lambda \> ^{-\frac{1}{3}  -\beta    } e^{-   (p-1)  |x|}  \text{  for all   $\Re \lambda \ge -\varepsilon_1  $}.
\end{align}

\end{lemma}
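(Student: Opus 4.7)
\emph{Plan of proof.}
The approach parallels the proofs of Lemmas~\ref{lem:estimate_f1} and~\ref{lem:estimate_f3} but requires extra care because $\mu_2$ is the middle root of \eqref{eq:mus}. Starting from the variation-of-parameters formula \eqref{eq:solnonhom} with $y=e^{\mu_2x}\tilde m_2$, I would write the integral equation
\begin{align*}
\tilde m_2(x,\lambda) &= 1 - \sum_{j=1,2}a_j\mu_j\int_{-\infty}^{x}e^{(\mu_j-\mu_2)(x-y)}f'(\phi(y))\tilde m_2(y,\lambda)\,dy \\
&\quad - a_3\mu_3\int_{x}^{+\infty}e^{(\mu_3-\mu_2)(x-y)}f'(\phi(y))\tilde m_2(y,\lambda)\,dy,
\end{align*}
where the endpoints are dictated by \eqref{eq:relmus} so that each kernel $e^{(\mu_j-\mu_2)(x-y)}$ is bounded in modulus by $1$ on the region where it appears, and where the inhomogeneous term $1$ produces the desired asymptotic $\tilde m_2(x,\lambda)\to 1$ as $x\to-\infty$. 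The novel feature compared to Lemmas~\ref{lem:estimate_f1} and~\ref{lem:estimate_f3} is that the $j=3$ integral is over $(x,+\infty)$, so the resulting equation is not purely Volterra.

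To reinstate a Volterra structure I would fix $x_0\in\R$ and restrict attention to $x\le x_0$, splitting $\int_x^{+\infty}=\int_x^{x_0}+\int_{x_0}^{+\infty}$. After extending $\tilde m_2$ to $(x_0,+\infty)$ in any bounded fashion (for instance, by solving \eqref{eq:lineq1} forward from $x_0$ with the data prescribed by the integral equation at $x_0$), the second piece becomes a fixed inhomogeneity, and what remains on $(-\infty,x_0]$ is a genuinely Volterra equation with the same structure as in the proof of Lemma~\ref{lem:estimate_f1}. Choosing $x_0$ so that $\sum_j|a_j\mu_j|\int_{-\infty}^{x_0}f'(\phi(y))\,dy$ is controlled uniformly in $\lambda$---which is possible by \eqref{eq:estamu} combined with $f'(\phi)\in L^1(\R)$---the Picard iterates obey
\[
|\tilde m_2^{[n]}(x)|\le \tfrac{1}{n!}\bigl(C_*\langle\lambda\rangle^{-1/3}\textstyle\int_{-\infty}^{x_0}f'(\phi(y))\,dy\bigr)^n
\]
by exactly the bookkeeping of \eqref{eq:Volterra_integral}. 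This yields \eqref{eq:estimate_tildef21} and \eqref{eq:estimate_tildef22}.

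The pointwise decay \eqref{eq:estimate_tildef233} is obtained from the exponential bound $\int_{-\infty}^x f'(\phi(y))\,dy\lesssim e^{-(p-1)|x|}$ at $x\to-\infty$ (a consequence of $\phi(y)\sim Ce^{-|y|}$ and $f'(\phi)\sim \phi^{p-1}$) together with the companion bound for the $j=3$ integral $\int_x^{x_0}e^{\Re(\mu_3-\mu_2)(x-y)}f'(\phi(y))\,dy$, estimated by splitting at $y=0$ and using the growth rates of $\Re(\mu_3-\mu_2)$ from \eqref{eq:munear0} and \eqref{eq:muhighen}. Derivatives in $x$ are treated as in \eqref{eq:derm11} by differentiating the integral equation and integrating by parts, picking up factors $|f'(\phi)|+|f''(\phi)\phi'|\lesssim f'(\phi)$; derivatives in $\lambda$ produce an extra $\langle\lambda\rangle^{-1}$ per derivative via \eqref{eq:expdl}, combining to give the prefactor $\langle\lambda\rangle^{-1/3-\beta}$ in \eqref{eq:estimate_tildef232} and \eqref{eq:estimate_tildef233}. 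The main obstacle is precisely the non-Volterra character of the $j=3$ term and the accompanying necessity of restricting to $(-\infty,x_0]$: globally on $\R$, $\tilde m_2$ cannot be bounded uniformly in $\lambda$, because $\tilde f_2$ is determined only modulo a multiple of $f_3$ and no normalization fixing this ambiguity is uniform in $\lambda$.
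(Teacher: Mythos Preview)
Your overall strategy is close to the paper's, but there is a genuine gap in the claim that ``what remains on $(-\infty,x_0]$ is a genuinely Volterra equation with the same structure as in the proof of Lemma~\ref{lem:estimate_f1}.'' It is not. After your splitting, the operator acting on $\tilde m_2$ restricted to $(-\infty,x_0]$ has pieces running in opposite directions: for $j=1,2$ the integration is $\int_{-\infty}^x$, while for $j=3$ it is $\int_x^{x_0}$. Iterating such a mixed operator does \emph{not} produce the nested-simplex integrals that give the $1/n!$ factor; a term like $T_{j=1}T_{j=3}$ involves $\int_{-\infty}^x dy_1\int_{y_1}^{x_0}dy_2$, which is not contained in an ordered simplex of the form $y_1\le y_2\le\cdots$ or its reverse. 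So your Picard iterate bound $\frac{1}{n!}(\cdots)^n$ is not justified. A secondary issue is the circularity in your setup: writing the equation with $\int_x^{+\infty}$ and then ``extending $\tilde m_2$ to $(x_0,+\infty)$ in any bounded fashion'' requires data at $x_0$ that you do not yet have.

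The paper sidesteps both problems by treating the equation as a contraction rather than a Volterra iteration. It sets up the integral equation on $(-\infty,x_0]$ with the $j=3$ integral taken as $\int_x^{x_0}$ from the outset---this is simply a choice of integration constant in \eqref{eq:solnonhom}, so no extension beyond $x_0$ is needed---and then observes that the resulting operator $T$ satisfies $\|T\|_{\mathcal{L}(L^\infty((-\infty,x_0)))}\le\frac12$ once $x_0$ is sufficiently negative, because every integral sees only $f'(\phi(y))$ with $y\le x_0$. Hence $(1-T)^{-1}$ exists with norm $\le 2$, giving \eqref{eq:estimate_tildef21} directly. You already have the key ingredient for this---your sentence about choosing $x_0$ so that $\sum_j|a_j\mu_j|\int_{-\infty}^{x_0}f'(\phi)$ is small is exactly what forces $\|T\|\le\frac12$---but the mechanism you invoke (Volterra factorial gain) is the wrong one. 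Once the contraction is in place, your sketches for the derivatives and for the exponential decay \eqref{eq:estimate_tildef233} go through essentially as in the paper: one differentiates the fixed-point equation, moves the lower-order derivatives of $\tilde m_2$ to the inhomogeneous side, applies $(1-T)^{-1}$, and uses \eqref{eq:estamu} and \eqref{eq:expdl}.
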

\begin{proof}
  For  $ x_0\in \R $   consider the equation
\begin{equation}\label{eq:m2tilde}
\tilde{m}_2(x , \lambda ) = 1 + T  \tilde{m_2}(\cdot , \lambda )  \ ,
\end{equation}
where
\begin{align*}
Tg (x) &:= - a_1 \mu_1 \int_{-\infty}^x e^{ ( \mu_1 -\mu_2)   (  x-y) } f' ( \phi   (y))  g(y) d y
\\&
- a_2  \mu_2 \int_{-\infty}^x   f' ( \phi   (y))  g(y) d y
+
a_3 \mu_3 \int_x^{x_0}   e^{ ( \mu_3 -\mu_2)   (  x-y) } f' ( \phi    (y))  g(y) d y\ .
\end{align*}
 Then there exists a $ x_1\in \R $ such that for $ x_0\leq x_1 $ we have that $ \| T\| _{\mathcal{L}\(L^\infty\( (  -\infty, x_0 )  \)\)}\leq 1/2 $ for all  $\Re \lambda \ge -\varepsilon_1  $.  Then  $ \| (1-T) ^{-1}\| _{\mathcal{L}\(L^\infty\( (  -\infty, x_0 )  \)\)} \le 2 $  yielding \eqref{eq:estimate_tildef21}. The limit \eqref{eq:estimate_tildef22} is obtained using dominated convergence.
  Differentiating \eqref{eq:m2tilde} we obtain like in Lemma \ref{lem:estimate_f1}
 \begin{align*}&
   (1-T)  \partial_x^{\alpha} \partial_\lambda^\beta \tilde{m}_2(x , \lambda )   =  A_1+A_2  \text{ where }
   \\ & A_1 = -\sum a_{\beta _1\beta_2}^{\alpha  _1\alpha _2}      \int_x^{+\infty} \partial _{\lambda } ^{ \beta _1 }   \(  a_1 \mu_1   e^{ ( \mu_1 -\mu_2)   (  x-y) } +  a_2 \mu_2   \)   \partial _{y } ^{ \alpha _1 }  \(  f' ( \phi  (y)   \)   \partial_y^{\alpha _2} \partial_\lambda^{\beta _2}   \tilde{m}_2 (y , \lambda )  d y \\& A_2 =  \sum a_{\beta _1\beta_2}^{\alpha  _1\alpha _2}  \int_x^{x_1}  \partial _{\lambda } ^{ \beta _1 }   \(  a_3 \mu_3   e^{ ( \mu_3 -\mu_2)   (  x-y) }     \) \partial _{y } ^{ \alpha _1 }  \(  f' ( \phi  (y)   \)   \partial_y^{\alpha _2} \partial_\lambda^{\beta _2}   \tilde{m}_2 (y , \lambda )  d y
 \end{align*} like in Lemma \ref{lem:estimate_f1}.
  Since $A_1$ and $A_2$ are bounded above by the right hand side of \eqref{eq:estimate_tildef232}, inverting $1-T$ we obtain \eqref{eq:estimate_tildef232}.  Finally, the last statement follows in an elementary way from
\begin{align*}
  |Tg (x)| &\lesssim   \<\lambda \> ^{-\frac{1}{3}     }   \| g\| _{L^\infty (-\infty , x_0)} \left [ \int_{-\infty}^x    e^{(p-1)y}    d y \right .
\\&\left .  +  e^{- \Re ( \mu_3 -\mu_2)    \frac{|x|}{2} }
  \int _{-\infty}^{x_0}  e^{(p-1) y}      d y  +  e^{-\frac {(p-1)}{2}|x|}  \int _{x}^{\frac{x}{2}}  e^{- \Re ( \mu_3 -\mu_2)   |x-y| }   dy   \right ] .
\end{align*}

\end{proof}
We will need also additional solutions.
\begin{lemma}\label{lem:extrajost}
\begin{enumerate}
\item There exists $ F_1(x , \lambda )= e^{\mu_1 x} M_1(x , \lambda )  $  and   $ x_1\in \R $ such that for any  $\alpha, \beta \ge 0$  there is a constant $C_{\alpha\beta  }$   such that  for all $x\le x_1$  and   $\Re \lambda \ge -\varepsilon_1  $
\begin{align*}
 |    \partial_\lambda^\beta \partial_x^{\alpha} \( M_1(x , \lambda )-1\)  |  \leq  C_{\alpha, \beta}  \<\lambda \> ^{-\frac{1}{3}     - \beta  }  e^{-   (p-1)  |x|}
\text{ and }
\lim_{x\to -\infty} M_1(x , \lambda ) =1.
\end{align*}

\item There exists $ F_2(x , \lambda )= e^{\mu_2 x} M_2(x , \lambda ) $   and   $ x_2\in \R $ such that for any  $\alpha, \beta \ge 0$  there is a constant $C_{\alpha\beta  }$ such that  for all $x\ge x_2$  and      $\Re \lambda \ge -\varepsilon_1  $
\begin{align*}
 |    \partial_\lambda^\beta \partial_x^{\alpha} \( M_2(x , \lambda )-1\)  |  \leq  C_{\alpha, \beta}  \<\lambda \> ^{-\frac{1}{3}     - \beta  }  e^{-   (p-1)   |x|}
\text{ and }
\lim_{x\to +\infty} M_2(x , \lambda ) =1.
\end{align*}

\item There exists $ F_3(x , \lambda )= e^{\mu_3x } M_3(x , \lambda ) $ and   $ x_3\in \R $ such that for any  $\alpha, \beta \ge 0$  there is a constant $C_{\alpha\beta  }$ such that  for all $x\ge x_3$  and      $\Re \lambda \ge -\varepsilon_1  $
\begin{align*}
 |    \partial_\lambda^\beta \partial_x^{\alpha} \( M_3(x , \lambda )-1\)  |  \leq  C_{\alpha, \beta}  \<\lambda \> ^{-\frac{1}{3}     - \beta  }  e^{-  (p-1)  |x|}
\text{ and }
\lim_{x\to +\infty} M_3(x , \lambda ) =1.
\end{align*}

\end{enumerate}
\end{lemma}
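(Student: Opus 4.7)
Each of $F_1,F_2,F_3$ will be constructed by a Volterra integral equation of the same type as \eqref{eq:m2tilde} in Lemma~\ref{lem:estimate_tildef2}. The guiding principle, forced by the ordering $\Re\mu_1<0<\Re\mu_2<\Re\mu_3$, is to choose the limits of integration so that every exponential kernel $e^{(\mu_j-\mu_i)(x-y)}$ appearing in the equation satisfies $\Re[(\mu_j-\mu_i)(x-y)]\le 0$ on its domain of integration, so that the resulting operator is a contraction on $L^\infty$ of a suitable half-line once the auxiliary parameter $x_i$ is pushed far enough into the tail.

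Concretely, with $a_j\mu_j$ as in \eqref{eq:solution_nonhomogeneous_equation}, I would set on $(-\infty,x_1]$
\begin{align*}
M_1(x)=1-a_1\mu_1\int_{-\infty}^{x}f'(\phi)M_1\,dy-\sum_{j=2,3}a_j\mu_j\int_{x}^{x_1}e^{(\mu_j-\mu_1)(x-y)}f'(\phi)M_1\,dy,
\end{align*}
on $[x_2,+\infty)$
\begin{align*}
M_2(x)&=1-a_1\mu_1\int_{x_2}^{x}e^{(\mu_1-\mu_2)(x-y)}f'(\phi)M_2\,dy-a_2\mu_2\int_{x}^{+\infty}f'(\phi)M_2\,dy\\
&\quad-a_3\mu_3\int_{x}^{+\infty}e^{(\mu_3-\mu_2)(x-y)}f'(\phi)M_2\,dy,
\end{align*}
and on $[x_3,+\infty)$
\begin{align*}
M_3(x)=1-\sum_{j=1,2}a_j\mu_j\int_{x_3}^{x}e^{(\mu_j-\mu_3)(x-y)}f'(\phi)M_3\,dy-a_3\mu_3\int_{x}^{+\infty}f'(\phi)M_3\,dy.
\end{align*}
Each equation is solved by Neumann series exactly as in \eqref{eq:m2tilde}. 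Using $\sum_j|a_j\mu_j|\lesssim\<\lambda\>^{-1/3}$ from \eqref{eq:estamu} together with the exponential decay of $f'(\phi)$, the operator norm on the relevant $L^\infty$ half-line is $\le 1/2$ once $|x_i|$ is taken large enough, giving $\|M_i\|_{L^\infty}\le 2$; the pointwise limits $M_i\to 1$ at $\mp\infty$ then follow by dominated convergence.

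To upgrade to the quantitative bound $|M_i-1|\lesssim \<\lambda\>^{-1/3}e^{-(p-1)|x|}$, I would substitute $M_i=1+O(1)$ into the right-hand side and evaluate each term with the change of variable $u=|x-y|\ge 0$, using $|f'(\phi(y))|\lesssim e^{-(p-1)|y|}$. Each integral reduces to $\int_0^{\cdot}e^{-(\Re(\mu_j-\mu_i)\pm(p-1))u}\,du$, which produces the prefactor $e^{-(p-1)|x|}$ as soon as the remaining exponent is strictly negative. The derivative estimates $\partial_x^\alpha\partial_\lambda^\beta M_i$ are then obtained by inducting on $\alpha+\beta$, integrating by parts in $x$ exactly as in \eqref{eq:derm11}-\eqref{eq:m1dergen}, and using \eqref{eq:expdl} to control the factor $(x-y)\partial_\lambda(\mu_j-\mu_i)$ that a $\lambda$-derivative brings down from the kernel; this factor is responsible for the extra gain $\<\lambda\>^{-1}$ per $\lambda$-derivative, without damaging the exponential decay.

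The main obstacle is securing exactly the rate $(p-1)$ in the exponential: this requires the strict inequalities $\Re(\mu_2-\mu_1)>p-1$, $\Re(\mu_3-\mu_2)>p-1$ and $\Re(\mu_3-\mu_1)>p-1$ throughout the closed region $\{\Re\lambda\ge -\varepsilon_1\}$. On $\lambda\in\im\R$ the explicit formulas recorded before \eqref{eq:wronskdef} give $\Re(\mu_2-\mu_1)=\Re(\mu_3-\mu_2)=\sqrt{1+\tfrac34 k^2}\ge 1$ and $\Re(\mu_3-\mu_1)\ge 2$; combined with the running assumption $1<p<2$ and the analyticity of the $\mu_j$, this gives the required strict inequalities provided $\varepsilon_1$ is chosen small enough. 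This is also why the three functions must be built on half-lines only: on the opposite tail the coefficients of the growing exponentials are not fixed by the normalization at infinity, and $M_i$ cannot be kept bounded without mixing in the other Jost functions.
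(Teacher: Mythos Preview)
Your approach is exactly the paper's: the same three Volterra equations with the same choice of limits (so that every kernel $e^{(\mu_j-\mu_i)(x-y)}$ has nonpositive real exponent on its domain of integration), contraction on $L^\infty$ of the appropriate half-line once $|x_i|$ is large, dominated convergence for the limit, and induction on $\alpha+\beta$ via integration by parts combined with \eqref{eq:expdl} for the $\lambda$-derivatives. Your discussion of the exponential rate, where you identify and verify $\Re(\mu_j-\mu_i)>p-1$ on $\im\R$ under $1<p<2$, is in fact slightly more careful than the paper's sketch.

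One correction worth flagging: your signs are off on every integral that runs \emph{outward} from $x$. Starting from \eqref{eq:solnonhom}, each term is $-a_j\mu_j\int_{c_j}^{x}\ldots\,dy$; when you flip to $\int_x^{x_1}$ or $\int_x^{+\infty}$ this becomes $+a_j\mu_j\int_x^{\cdot}\ldots\,dy$. Thus in your $M_1$ equation the $j=2,3$ terms should carry a plus, in $M_2$ the $j=2,3$ terms should carry a plus, and in $M_3$ the $j=3$ term should carry a plus---exactly as in the paper's operators $T$ and $\widetilde{T}$. With your signs the fixed point still exists and satisfies all the estimates you prove, but $e^{\mu_i x}M_i$ would not solve $\mathcal{L}u=\lambda u$.
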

 \begin{proof}
     Here $ M_1(x , \lambda )$   is obtained solving
    \begin{align*}&
          M_1(x , \lambda )  = 1 +    T     M_1(\cdot  , \lambda )     \end{align*}
     where
\begin{align*}
T g (x) &:= - a_1 \mu_1 \int_{-\infty}^x   f' ( \phi   (y))  g(y) d y
\\&
+ a_2  \mu_2 \int_{x} ^{x_1}  e^{ ( \mu_2 -\mu_1)   (  x-y) }  f' ( \phi  (y))  g(y) d y
+
a_3 \mu_3 \int_x^{x_1}   e^{ ( \mu_3 -\mu_1)   (  x-y) } f' ( \phi    (y))  g(y) d y\ .
\end{align*}
  Like in Lemma   \ref{lem:estimate_tildef2}  there exists a $ x_0\in \R $ such that   $ \| T\| _{\mathcal{L}\(L^\infty\( (  -\infty , x_1  )  \)\)}\leq 1/2 $  for $x_1\le x_0$. This gives the estimate for $(\alpha ,\beta )=(0,0)$  and dominated convergence yields  the limit.  Proceeding by induction
       \begin{align*}&
   (1-T)  \partial_x^{\alpha} \partial_\lambda^\beta  M_1(x , \lambda )   =  A_1+A_2  \text{ where } \\ & A_1 = -\sum a_{\beta _1\beta_2}^{\alpha  _1\alpha _2}      \int_{-\infty}^x  \partial _{\lambda } ^{ \beta _1 }   \(  a_1 \mu_1      \)   \partial _{y } ^{ \alpha _1 }  \(  f' ( \phi (y)   \)   \partial_y^{\alpha _2} \partial_\lambda^{\beta _2}   M_1 (y , \lambda )  d y
   \\ & A_2 = -\sum _{j=2}^{3}\sum a_{\beta _1\beta_2}^{\alpha  _1\alpha _2}      \int_{x} ^{x_1}  \partial _{\lambda } ^{ \beta _1 }   \(  a_j \mu_j   e^{ ( \mu_j -\mu_1)   (  x-y) }  \)   \partial _{y } ^{ \alpha _1 }  \(  f' ( \phi  (y)   \)   \partial_y^{\alpha _2} \partial_\lambda^{\beta _2}  M_1 (y , \lambda )  d y
 \end{align*}
    like in Lemma \ref{lem:estimate_f1}. Then    using   \eqref{eq:expdl}   by induction
 \begin{align*}
   | A_j|\lesssim  \< \lambda \> ^{-2/3   -  \beta  }  \int_{-\infty}^x e^{- |y|}   dy   \text{ for }j=1,2.
 \end{align*} 
                          The  statement      for $F_3(x , \lambda )$ is obtained similarly. Finally  
$ M_2(x , \lambda )$ is obtained solving
    \begin{align*}&
          M_2(x , \lambda )  = 1 +    \widetilde{T}     M_2(\cdot  , \lambda )     \end{align*}
     where
\begin{align*}
\widetilde{T} g (x) &:=   a_ 2 \mu_2 \int^{ +\infty}_x   f' ( \phi  (y))  g(y) d y
\\&
- a_1  \mu_1  \int_{x_2} ^{x }  e^{ ( \mu_1 -\mu_2)   (  x-y) }  f' ( \phi   (y))  g(y) d y
+
a_3 \mu_3 \int^{ +\infty}_x   e^{ ( \mu_3 -\mu_2)   (  x-y) } f' ( \phi   (y))  g(y) d y\ .
\end{align*}
Then again   there exists a $ \widetilde{x}_0\in \R $ such that   $ \| \widetilde{T}\| _{\mathcal{L}\(L^\infty\( (    x_2, +\infty  )  \)\)}\leq 1/2 $  for $x_2\ge \widetilde{x}_0$.  This gives the estimate for $(\alpha ,\beta )=(0,0)$  and dominated convergence yields  the limit. The other estimates are obtained by induction as above.

 \end{proof}

The following   shows that $\tilde{f}_2(x , \lambda )$  does not satisfy good bounds: we will remedy this later.

\begin{lemma}\label{lem:estm2} For any     pair $(\alpha ,\beta) $  and any $\lambda $ with $\Re \lambda \ge 0$ there is a  $C_{\alpha\beta}(\lambda )$ which is locally bounded in $\lambda $ such that  we have
\begin{align} \label{eq:estimate_tildef234}
   |  \partial_x^{\alpha} \partial_\lambda^\beta   \tilde{m}_2(x  , \lambda ) |  \le C_{\alpha, \beta} (\lambda )    \(1 +    e^{\Re (\mu _3-\mu _2) x} \)    \text{  for all $x\in \R$} .
\end{align}

\end{lemma}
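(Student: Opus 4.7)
The estimate \eqref{eq:estimate_tildef234} already holds on $(-\infty,x_0]$ thanks to \eqref{eq:estimate_tildef21}--\eqref{eq:estimate_tildef232}, so the task is to extend it to $x>x_0$. The plan is to represent $\tilde f_2$ on a half-line $[x_\ast,+\infty)$ as a linear combination of the three solutions $f_1,F_2,f_3$ coming respectively from Lemma \ref{lem:estimate_f1}, part (2) of Lemma \ref{lem:extrajost}, and Lemma \ref{lem:estimate_f3}, and then to transfer the pointwise bounds of those functions to $\tilde f_2$. Setting $x_\ast:=\max(x_0,x_1,x_2,x_3)$ with $x_1,x_2,x_3$ as in Lemma \ref{lem:extrajost}, all three of $f_1(\cdot,\lambda), F_2(\cdot,\lambda),f_3(\cdot,\lambda)$ are defined on $[x_\ast,+\infty)$ and, having distinct leading exponents $e^{\mu_j x}$, they form a basis of the solution space of \eqref{eq:lineq1} there.

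The first step is to verify that the Wronskian $W[f_1,F_2,f_3](x_\ast,\lambda)$ is locally bounded away from $0$ on $\{\Re\lambda\ge 0\}$. By the smallness estimates for $m_1-1$, $M_2-1$, $m_3-1$ and their $x$-derivatives in \eqref{eq:m1_estimate_partial}, \eqref{eq:m3_estimate_partial} and Lemma \ref{lem:extrajost} (provided $x_\ast$ is large enough), this Wronskian differs from $e^{(\mu_1+\mu_2+\mu_3)x_\ast}W_0(\lambda)=W_0(\lambda)$ by a perturbation that is small uniformly on compact subsets of $\lambda$, where \eqref{eq:sum_of_mus} and \eqref{eq:vandermonde} were used. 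Since $W_0(\lambda)=\prod_{i<j}(\mu_i-\mu_j)$ vanishes only on the discrete set where two characteristic roots coincide, the desired lower bound holds locally in $\lambda$ away from that set, and by continuity this suffices for the locally bounded constants we seek.

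Next, matching Cauchy data at $x_\ast$, I would solve for coefficients $c_j(\lambda)$ so that
\begin{equation*}
\tilde f_2(x,\lambda)=c_1(\lambda)f_1(x,\lambda)+c_2(\lambda)F_2(x,\lambda)+c_3(\lambda)f_3(x,\lambda),\qquad x\ge x_\ast.
\end{equation*}
By Cramer's rule each $c_j(\lambda)$ is a ratio of $3\times 3$ determinants whose entries are $\partial_x^k\tilde f_2(x_\ast,\lambda)$ and $\partial_x^k f_1(x_\ast,\lambda),\partial_x^k F_2(x_\ast,\lambda),\partial_x^k f_3(x_\ast,\lambda)$ for $k=0,1,2$; each of these is locally bounded and smooth in $\lambda$ by \eqref{eq:estimate_tildef232} and the analogous estimates in Lemmas \ref{lem:estimate_f1}, \ref{lem:extrajost}, \ref{lem:estimate_f3}, and the denominator stays away from $0$ by the previous step. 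Using $|f_1(x,\lambda)|\lesssim e^{\Re\mu_1 x}$, $|F_2(x,\lambda)|\lesssim e^{\Re\mu_2 x}$, $|f_3(x,\lambda)|\lesssim e^{\Re\mu_3 x}$ for $x\ge x_\ast$, together with $\Re\mu_1<\Re\mu_2<\Re\mu_3$, dividing by $|e^{\mu_2 x}|$ gives
\begin{equation*}
|\tilde m_2(x,\lambda)|\le C(\lambda)\left(e^{\Re(\mu_1-\mu_2)x}+1+e^{\Re(\mu_3-\mu_2)x}\right)\le C'(\lambda)\left(1+e^{\Re(\mu_3-\mu_2)x}\right)
\end{equation*}
for $x\ge x_\ast$. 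For the mixed derivative bounds, apply $\partial_x^\alpha\partial_\lambda^\beta$ to the representation above and distribute through the Leibniz rule, using the derivative estimates for $m_1,M_2,m_3$ in \eqref{eq:m1_estimate_partial}, \eqref{eq:m3_estimate_partial} and Lemma \ref{lem:extrajost}, combined with the smoothness in $\lambda$ of $c_j(\lambda)$ obtained from differentiating Cramer's formula.

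The main obstacle is the quantitative Wronskian lower bound at $x_\ast$: only with it does Cramer's rule (and its differentiated version) yield coefficients $c_j(\lambda)$ locally bounded together with their $\lambda$-derivatives. This reduces, as explained above, to choosing $x_\ast$ large enough so that the perturbations of $m_j$ and $M_2$ from $1$ are controlled by half of the unperturbed Vandermonde determinant $W_0(\lambda)$, which is possible locally in $\lambda$ on $\{\Re\lambda\ge 0\}$ away from the coincidence locus of the $\mu_j$'s; all remaining steps are routine bookkeeping.
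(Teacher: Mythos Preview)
Your overall strategy is the same as the paper's—express $\tilde f_2$ on a right half-line as a linear combination of three solutions with known asymptotics at $+\infty$ and read off the bound—but you have chosen the wrong basis. The paper uses $\{f_1,F_2,F_3\}$, all three normalized at $+\infty$ (Lemmas \ref{lem:estimate_f1} and \ref{lem:extrajost}). You instead use $\{f_1,F_2,f_3\}$, with $f_3$ from Lemma \ref{lem:estimate_f3} normalized at $-\infty$. This breaks the argument in two places.

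First, the estimate \eqref{eq:m3_estimate_partial} you invoke does \emph{not} make $m_3-1$ small for large positive $x$: its right-hand side $\int_{-\infty}^x f'(\phi)\,dx'$ tends to $\|f'(\phi)\|_{L^1}\neq 0$ as $x\to+\infty$, so your claim that the Wronskian at $x_\ast$ is a small perturbation of $W_0(\lambda)$ is unjustified as written. Second, and fatally, by Lemma \ref{lem:jostlambda0} one has $f_1(\cdot,0)=f_3(\cdot,0)=\beta^{-1}(p)\phi'$, so at $\lambda=0$ your three functions are linearly \emph{dependent}; the Wronskian $W[f_1,F_2,f_3]$ vanishes there and Cramer's rule yields coefficients $c_j(\lambda)$ that blow up as $\lambda\to 0$. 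Your ``by continuity this suffices'' cannot repair this: the resulting $C_{\alpha\beta}(\lambda)$ fail to be locally bounded near $\lambda=0$, which lies in the domain $\Re\lambda\ge 0$.

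The fix is simply to replace $f_3$ by $F_3$ throughout. Then $m_1,M_2,M_3$ all tend to $1$ at $+\infty$, and on $\Re\lambda\ge -\varepsilon_1$ the roots $\mu_j$ have strictly ordered real parts by \eqref{eq:relmus}, hence are distinct and $W_0(\lambda)\neq 0$—so your worry about the coincidence locus is in fact moot in this region. With this correction the remainder of your outline goes through. One minor omission: the interval $(x_0,x_\ast)$ is covered neither by Lemma \ref{lem:estimate_tildef2} nor by your expansion; the paper handles it by smooth dependence of solutions of \eqref{eq:lineq1} on $\lambda$ on compact $x$-intervals.
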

\begin{proof}
First of all, there exists some $x_0$ such that
for $x\le x_0$ we have the estimates  in Lemma \ref{lem:estimate_tildef2}. Next we observe that
  $\tilde{f}_2(\cdot  , \lambda )\in \Span \{   {f}_1(\cdot  , \lambda ), {F}_2(\cdot  , \lambda ), F_3(\cdot  , \lambda )        \}$ since the latter three functions are, by their asymptotic behavior at $+\infty$, linearly independent and so they generate the  vector space of  all the solutions of \eqref{eq:lineq1}. Hence
  \begin{align*}
    \tilde{f}_2(x  , \lambda )=A_1(\lambda ) {f}_1(x  , \lambda )+A_2(\lambda ) {F}_2(x  , \lambda ) +A_3(\lambda ) F_3(x  , \lambda )
  \end{align*}
  for the  $ A_j(\lambda )$   three analytic functions in $\lambda $. The estimate \eqref{eq:estimate_tildef234} is then true for some $x_1$ and for  $x\ge x_1$ by Lemmas \ref{lem:estimate_f1} and \ref{lem:extrajost}. Finally for $x_0\le x\le  x_1$ the estimates hold for  \eqref{eq:estimate_tildef234} by smooth dependence  of solutions of \eqref{eq:lineq1} on $\lambda$.

\end{proof}

We set now
 \begin{align}& \label{eq:bxlambda}b_{ij}(x,\lambda ) := f_i (x,\lambda ) f_j' (x,\lambda ) - f_i'(x,\lambda ) f_j (x,\lambda ) \text{  and } \\& \label{eq:bxlambdatil}\widetilde{b}_{ j2}(x,\lambda ) := f_j (x,\lambda ) \widetilde{f}_2' (x,\lambda ) - f_j'(x,\lambda ) \widetilde{f}_2 (x,\lambda ) \text{ for $j=1,3 $}
 \end{align}
 and $\widetilde{b}_{  2j}(x,\lambda ):=-\widetilde{b}_{ j2}(x,\lambda )$.
For $Y= Y(x,\lambda ):= ( {f}_j(x, \lambda ),  {f}_j'(x, \lambda ) , {f}_j'(x, \lambda ))^\intercal $   for $j=1$ or $j=3$     then
\begin{align}
  \frac{d}{dx} Y =AY \text{   for }A = \left(
                                         \begin{array}{ccc}
                                           0 & 1 & 0 \\
                                           0 & 0 & 1 \\
                                          -\lambda -\( f'(\phi )\) '   &  1- f'(\phi  ) & 0 \\
                                         \end{array}
                                       \right) .\label{eq:systA}
\end{align}
Consider $Z=(z_1,z_2,z_3)^\intercal$  a solution of the dual system  $\frac{d}{dx}Z =-A^\intercal Z$. Then   we have   $\mathcal{L} ^*z_3=\lambda z_3$. Notice that this observation implies that
\begin{align}
  \label{eq:soldual}   \( \mathcal{L} ^* -\lambda  \)     b_{ij}(\cdot ,\lambda )  =0 \text{ for all $i,j$.}
\end{align}
Pego and Weinstein  \cite {PegoWei1} define the Evans function $D(\lambda ) =Y^\intercal (x, \lambda)  Z (x, \lambda)$ for solutions $Y$ of  \eqref{eq:systA}   and $Z$  like above chosen so that
\begin{align*}
   Y^\intercal _  + (\lambda) Z_-(\lambda) =1 \text{ where }  Y  _+(\lambda) =\lim _{x\to +\infty}  e^{-\mu_ {1}(\lambda)  x} Y(x, \lambda)  \text{ and } Z  _-(\lambda) =\lim _{x\to -\infty}  e^{ \mu_ {1} (\lambda) x} Z(x,\lambda).
\end{align*}
In Pego and  Weinstein \cite {PegoWei1,PegoWei2} it is  shown that $\lambda \in  {\C}  $  with $\Re \lambda \ge 0$    is an eigenvalue if and only if $D(\lambda )=0$.   We can apply  Pego and  Weinstein \cite [Corollary 1.18]{PegoWei1}
and conclude $D(\lambda ) \xrightarrow{\lambda \to \infty}1$.   From  Pego and  Weinstein \cite  {PegoWei1}, in particular p. 73,  we have
\begin{align}
  \label{eq:derevans0}D(0)=D'(0) =0 \text{ and }D''(0)= \(   \frac {p+1}2 \)^{-\frac 2{p-1}} \< \Lambda_p \phi  , \phi \> .
\end{align}
From Pego and Weinstein \cite[ Formula (1.28)]{PegoWei1}
\begin{align}\label{eq:limm1-infty}
   \lim _{x\to -\infty}  e^{-\mu_ {1}x} Y (x, \lambda ) =D(\lambda )  {Y}_+(\lambda).
\end{align}

\begin{lemma} \label{lem:jostlambda0}
   We have, for the $\Lambda_p$ in \eqref{eq:lambdap},
   \begin{align}
       \label{eq:jostlambda01}&  f_1(x,0 )=  f_3(-x ,0  )=   - \beta ^{-1}(p )   \phi ' (x)  \text{ where  }\beta  (p ):=\(    2 ({p+1})  \)^{\frac 1{p-1}}, \\  \label{eq:jostlambda02}&  \partial _\lambda f_1(x,0 )=       \beta ^{-1}(p ) \(  \frac{1}{p-1} \phi ' +  \Lambda_p  \phi   \) \text{  and } \\  \label{eq:jostlambda02pr}&  \partial _\lambda f_3(x,0 )=     \beta ^{-1}(p ) \(  \frac{1}{p-1} \phi ' -  \Lambda_p  \phi   \) .
   \end{align}
\end{lemma}
\begin{proof}
  We have   $  f_{1}(\cdot , 0 )  \in  \Span \{  \phi ', {F}_3(\cdot  , 0), M_2(\cdot  , 0 )        \}$.  We have  ${f}_1(x , 0 ) =e^{-x}(1+o(1))$   for $x \to +\infty $   by   Lemma \ref{lem:estimate_f1}. On the other hand
  $\phi '(x) =-\phi (x) \tanh \(  \dfrac{p-1}{2} x\) $ and for $x\to +\infty $
  \begin{align*}
   \phi '(x) &=   -\(\frac {p+1}2 \)^{\frac 1{p-1}}
     \(  \cosh\(\frac{p-1}2 x\)  \)  ^{-\frac 2{p-1}}\tanh \(  \dfrac{p-1}{2} x\)\\& \sim -\(\frac {p+1}2 \)^{\frac 1{p-1}}
      \( 2 ^{-1}  \exp \(   \frac{p-1}2 x\)  \)  ^{-\frac 2{p-1}} =-\(    2 ({p+1})  \)^{\frac 1{p-1}} e ^{-x}.
  \end{align*}
   Hence the asymptotic behaviors of $M_2(\cdot  , 0 )$,     Lemma    \ref{lem:estimate_tildef2},  and $F _3(\cdot  , 0)$, Lemma  \ref{lem:extrajost}, and the following,
  \begin{align*}
     e ^{x  }\( f_1(x,0 ) +      \beta ^{-1}(p )\phi ' (x) \) \xrightarrow{x \to +\infty} 0,
  \end{align*}
 yield  the formula for $ f_1(x,0 )$. From \eqref{eq:symmetryf13}
 \begin{align*}
   f_{3}(  x,  0 )=f_{1}(-x, 0 )=  -\beta ^{-1}(p )\phi ' (-x)=        \beta ^{-1}(p )\phi ' (x)
 \end{align*}
  we conclude \eqref{eq:jostlambda01}. Next  we have
    \begin{align*}
       \mathcal{L} \partial _\lambda f_1(\cdot ,0 )=  f_1(\cdot  ,0 ) =  -  \beta ^{-1}(p )  \phi '   .
    \end{align*}
    By \eqref{eq:M2.3} we have $ \mathcal{L}  \Lambda_p  \phi =- \phi ' $ and so
    \begin{align*}
     G(x):= \partial _\lambda f_1(\cdot ,0 ) -    \beta ^{-1}(p )\Lambda_p  \phi   \in  \Span \{  \phi ', {F}_3(\cdot  , 0), M_2(\cdot  , 0 )        \} .
    \end{align*}
     Since $G(x) \xrightarrow{x\to  +\infty}0  $   by Lemma \ref{lem:estimate_f1},  necessarily $G(x) =  \alpha    \beta ^{-1}(p )  \phi ' $ for some $\alpha \in \R$ and so
    \begin{align*}
     \partial _\lambda f_1(\cdot ,0 )&= \alpha   \beta ^{-1}(p )  \phi ' +   \beta ^{-1}(p )\Lambda_p  \phi    .
    \end{align*}
 We
     obtain $\alpha$     matching    asymptotic behaviors  in the last formula. By Lemma \ref{lem:estimate_f1}, for $x\to +\infty$
     \begin{align}\label{eq:m2prime0}
        \partial _\lambda f_1(x ,0 )&= e^{-x }\(  \partial _\lambda \mu _1 (0)  x+o(1)     \) =  e^{-x }\(   -\frac{1}{2 }  x+o(1)     \) \text{ by }\partial _\lambda \mu _1 (0)=-\frac{1}{2 },
     \end{align}
     with the last equality  obtained by   elementary implicit differentiation in \eqref{eq:mus}.
    For $x\to +\infty$
     \begin{align*}&
       \alpha    \beta ^{-1}(p )  \phi ' (x) +    \beta ^{-1}(p )\Lambda_p  \phi ( x) =   e^{-x}   \left (  -\frac{1}{2} x    -    \alpha + \frac{1}{p-1} +o(1)   \right )
     \end{align*}
     which   to match with \eqref{eq:m2prime0} requires $\alpha =  \frac{1}{p-1}$ giving   \eqref{eq:jostlambda02}.
Using \eqref{eq:symmetryf13}  and  \eqref{eq:jostlambda02} we obtain
\begin{align*}
  \partial _\lambda f_3(x ,0 ) &= - \partial _\lambda f_1(-x ,0 )= -  \beta ^{-1}(p ) \(  \frac{1}{p-1} \phi ' (-x) +  \Lambda_p  \phi (-x)  \) \\&=    \beta ^{-1}(p ) \(  \frac{1}{p-1} \phi ' ( x) -  \Lambda_p  \phi ( x)  \) .
\end{align*}

\end{proof}

\begin{corollary}\label{lem:b13at0}   The following facts are true.
\begin{description}
  \item[i]  We have $\partial ^{a}_\lambda b _{13}(x,0)=0$ for $a=0,1 $.
  \item[ii] For any      $n, m \ge 0$  there is a constant $C_{n,m}$ such that   for all   $\lambda \in \C $ such that $ 0\le  \Re \lambda  \le \varepsilon  $  and $x\in \R$
    \begin{equation}\label{eq:b13_estimate_partial}
        \left |     \partial_x^{n}    \partial_\lambda^m \(    e^{ \mu _2 (\lambda )x }   b _{13}(x , \lambda )   \)  \right |  \leq C_{n,m}  \<\lambda \> ^{ \frac{1}{3}   -m  }    .
    \end{equation}
  \item[iii]   There exists a  constant $b_0\in \C$  such that  $\partial ^{2}_\lambda b _{13}(x,0)=2D''(0) +b_0 \phi (x)$
\end{description}

\end{corollary}
\begin{proof} Starting from \textbf{i},
  case $a=0$ is obvious from \eqref{eq:jostlambda01} while   \small
\begin{align*}
  &\partial  _\lambda b _{13}(x,0)  =\partial _\lambda f_1 (x,0 ) f_3' (x,0 )  +  f_1 (x,0 ) \partial _\lambda f_3' (x,0 )  - \partial _\lambda f_1'(x,0 ) f_3 (x,0 ) -  f_1'(x,0 ) \partial _\lambda f_3 (x,0 )=\\& \frac{1}{\beta  ^2(p )}
    \left [          \(  \cancel{\frac{1}{p-1} \phi ' }+  \bcancel{\Lambda_p  \phi }  \) \phi '' -\phi '  \(  \cancel{\frac{1}{p-1} \phi '}  - \xcancel{ \Lambda_p  \phi }  \) ' - \(  \cancel{\frac{1}{p-1} \phi '} +  \xcancel{ \Lambda_p  \phi }   \) '  \phi '+ \phi ''    \(  \cancel{\frac{1}{p-1} \phi '}  -   \bcancel{\Lambda_p  \phi }   \) \right ] \\& =0.
\end{align*}
\normalsize
By elementary computation and using \eqref{eq:sum_of_mus} we obtain
\begin{align*}
 &  b _{13}(x , \lambda ) =  e^{\mu _1 (\lambda )x }    m_1 (x,\lambda )  \(   e^{\mu _3 (\lambda )x } m_3  (x,\lambda )\) '  -  \(   e^{\mu _1 (\lambda )x } m_1  (x,\lambda )\) '    e^{\mu _3 (\lambda )x } m_3  (x,\lambda ) \\& =  e^{-\mu _2 (\lambda )x }   \( \(  \mu _3 (\lambda )-\mu _1 (\lambda )    \)   m_1 (x,\lambda )  m_3 (x,\lambda )  +    m_1 (x,\lambda )  m_3 ' (x,\lambda ) - m_1 '(x,\lambda )  m_3 (x,\lambda )       \) .
\end{align*}
Hence from  {Lemmas} \ref{lem:estimate_f1}--\ref{lem:estimate_f3} and  simple analysis of the $\mu _j$'s the following, proving \textbf{ii},  \small
\begin{align*}&
  \left |      \partial_x^{n}    \partial_\lambda^m \(    e^{ \mu _2 (\lambda )x }   b _{13}(x , \lambda )   \)    \right |  =\\&  \left |     \partial_x^{n}    \partial_\lambda^m  \(  \(  \mu _3 (\lambda )-\mu _1 (\lambda )    \)   m_1 (x,\lambda )  m_3 (x,\lambda )  +    m_1 (x,\lambda )  m_3 ' (x,\lambda ) - m_1 '(x,\lambda )  m_3 (x,\lambda )   \)  \right |  \leq C_{n,m}  \<\lambda \> ^{ \frac{1}{3}   -m  }    .
\end{align*}\normalsize
 Differentiating  \eqref{eq:soldual} and using Claim \textbf{i},  it is easy to conclude that  $   \mathcal{L} ^*       \partial _\lambda ^2 b_{13}(\cdot ,0)  =0$.
Then  \begin{align*}
  \partial _\lambda ^2    {b}_{13}   (\cdot ,0 ) \in \Span \{ 1, \phi , g     \}
\end{align*}
where $g$ is an unbounded solution of $ \mathcal{L}^*u=0$, but since, by Claim \textbf{i} and \eqref{eq:b13_estimate_partial},    $ \partial _\lambda ^2 {b}_{13}   (\cdot ,0 ) \in L^\infty (\R )$
there are two constants such that
 $ \partial _\lambda ^2    {b}_{13}   (\cdot ,0 )  =a_0 +b_0\phi  $. Here we show the explicit value $a_0=  2D''(0)\neq 0$, as this is the source of some difficulty later. Obviously we have
 $\displaystyle a_0=\lim _{x\to -\infty}\partial _\lambda ^2    {b}_{13}   (x ,0 ) $. We have
\begin{align*}
   \partial _\lambda ^2    {b}_{13}   (x ,0 )  &= 2 a(x) +b(x) \text{ with } a(x) = \partial _\lambda ^2 m_1 (x,0)  m_3 (x,0 )  \text{ and }
\\  b(x)&= 4\partial _\lambda   m_1 (x,0)  \partial _\lambda m_3 (x,0 )  +  2 \ m_1 (x,0)  \partial _\lambda ^2 m_3 (x,0 )   \\& + \partial _\lambda ^2  \(  m_1 (x,\lambda )  m_3 ' (x,\lambda ) - m_1 '(x,\lambda )  m_3 (x,\lambda )   \)|  _{\lambda =0}
\end{align*}
where we used   \textbf{i} and  $\mu _3 (0 )-\mu _1 (0 )=2$. Now   $m_3 (x,0 ) \xrightarrow{x\to -\infty}1$            by \eqref{eq:m3_estimate_partial}. We have
\begin{align*}
    m_1(x , \lambda ) -1 &=     \sum_{ {j}= 2,3 } a_ {j}  \mu _ {j}     \int_ {x}  ^{+\infty }e^{ (\mu_{ {j}}   -\mu_{1} ) (x-y)}  f' ( \phi   (y)   ) m_1 (y , \lambda )  d y \\& -  a_ {1}  \mu _ {1}   \int_ {x}  ^{+\infty }   f' ( \phi   (y)   ) m_1 (y , \lambda )  d y =: A_1(x,\lambda ) +  A_2(x,\lambda ).
\end{align*}
It is easy to see that always $\partial _\lambda ^m  \partial _\lambda ^n A_1(x,\lambda )  \xrightarrow{x\to -\infty}0$ and  by \eqref{eq:limm1-infty}
\begin{align*}
 \lim _{x\to -\infty}\partial _\lambda ^n A_2(x,\lambda )  = -\partial _\lambda ^n \(    a_ {1}  \mu _ {1}   \int_ {\R }   f' ( \phi   (y)   ) m_1 (y , \lambda )  d y  \) = \partial _\lambda ^n \( D(\lambda ) -1\)
\end{align*}
So we have proved $a (x  ) \xrightarrow{x\to -\infty} 2D''(0)$. Finally we prove  $b (x  ) \xrightarrow{x\to -\infty} 0$. For most of the terms in $b(x)$  this follows from \eqref{eq:m3_estimate_partial}.
The only possibly critical term is  the following, which gives the proof,
\begin{align*}
  \partial _\lambda ^2 \partial _x m_1  (x,\lambda ) = \partial _\lambda ^2 \partial _x  A_1(x,\lambda ) +  \partial _\lambda ^2 \(  a_ {1}  \mu _ {1}     f' ( \phi   (x)   ) m_1 (x , \lambda ) \)  \xrightarrow{x\to -\infty} 0.
\end{align*}

\end{proof}

\begin{lemma} \label{lem:dual0}
   For $j=1$ and $3$ and using  the notation in  \eqref{eq:bxlambdatil},    we have
   \begin{align}
       \label{eq:dual01}&   \widetilde{b}_{ j2}(x,0 )  =   (-1) ^{\frac{j-1}{2}}   \beta ^{-1}(p ) \<\Lambda_p  \phi   ,\phi   \>    \eta _2   \text{ and }\\ \label{eq:dual02}&   \partial _\lambda    \widetilde{b}_{ j2}(x,0  ) =-  (-1) ^{\frac{j-1}{2}}  \beta ^{-1}(p ) \<\Lambda_p  \phi,\phi  \>    \eta _1 +\alpha _j  \eta _2  \\&\label{eq:dual02b}  \text{with }  \alpha _1=-\alpha_3-\frac{2}{(p-1) \beta  (p ) \theta _3}    \text{  and }\alpha _3= \frac{      1+\frac{1}{p-1} -\frac{\theta _ 2}{\theta _ 3}}{ \beta  (p ) \theta _3} .
   \end{align}
\end{lemma}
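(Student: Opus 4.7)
The plan is to exploit the identity $(\mathcal{L}^*+\lambda)\widetilde{b}_{j2}(\cdot,\lambda)=0$, which holds by exactly the same Wronskian-type derivation as \eqref{eq:soldual}, since both $f_j$ and $\widetilde{f}_2$ are solutions of $(\mathcal{L}-\lambda)u=0$. Setting $\lambda=0$ gives $\mathcal{L}^*\widetilde{b}_{j2}(\cdot,0)=0$, so by Proposition \ref{prop:PEGOWEI2}(iii) I will get $\widetilde{b}_{j2}(\cdot,0)=c_j\eta_2$ for some scalar $c_j$, once membership in the appropriate weighted $L^2$-space is verified (the explicit formula computed below confirms this \emph{a posteriori}).

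To determine $c_j$ I set $u_0:=\widetilde{f}_2(\cdot,0)$. The equation $\mathcal{L}u_0=0$ integrates once to $L_+u_0=K$ for some constant $K$, and the asymptotic $u_0(x)\to 1$ as $x\to-\infty$ from \eqref{eq:estimate_tildef22}, combined with $\phi\to 0$, forces $K=1$. Together with $\phi'''=\phi'(1-p\phi^{p-1})$, obtained by differentiating the soliton equation $-\phi''+\phi-\phi^p=0$, a direct calculation yields the telescoping identity
\begin{equation*}
    (\phi'u_0'-\phi''u_0)'=\phi'u_0''-\phi'''u_0=-\phi'.
\end{equation*}
Integration from $-\infty$, where both sides vanish exponentially, gives $\phi'u_0'-\phi''u_0=-\phi$, and Lemma \ref{lem:jostlambda0} together with $\eta_2=\theta_3\phi$ and $\theta_3=1/\<\Lambda_p\phi,\phi\>$ (from \eqref{eq:lambdap}) then yields
\begin{equation*}
    \widetilde{b}_{j2}(x,0)=\beta^{-1}(p)\left(\phi'u_0'-\phi''u_0\right)=-\beta^{-1}(p)\<\Lambda_p\phi,\phi\>\eta_2,
\end{equation*}
which is \eqref{eq:dual01}.

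For \eqref{eq:dual02} I differentiate $(\mathcal{L}^*+\lambda)\widetilde{b}_{j2}(\cdot,\lambda)=0$ in $\lambda$ and set $\lambda=0$, obtaining $\mathcal{L}^*\partial_\lambda\widetilde{b}_{j2}(\cdot,0)=\beta^{-1}(p)\<\Lambda_p\phi,\phi\>\eta_2$. Since $\mathcal{L}^*\eta_1=-\eta_2$ by \eqref{eq:M2.4}, inverting $\mathcal{L}^*$ modulo $\ker\mathcal{L}^*=\Span\{\eta_2\}$ produces
\begin{equation*}
    \partial_\lambda\widetilde{b}_{j2}(\cdot,0)=-\beta^{-1}(p)\<\Lambda_p\phi,\phi\>\eta_1+\alpha\eta_2
\end{equation*}
for some scalar $\alpha$ to be determined.

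The main obstacle is identifying $\alpha$. For this I will expand $\partial_\lambda\widetilde{b}_{j2}(x,0)$ using \eqref{eq:jostlambda02} for $\partial_\lambda f_j(\cdot,0)=\beta^{-1}(p)(\tfrac{1}{p-1}\phi'-\Lambda_p\phi)$ together with the representation $\widetilde{f}_2(x,\lambda)=e^{\mu_2(\lambda)x}\widetilde{m}_2(x,\lambda)$ and $\partial_\lambda\mu_2(0)=1$ from \eqref{eq:munear0}, which injects a linear $xu_0$ contribution into $\partial_\lambda\widetilde{f}_2(x,0)$. Because $\widetilde{f}_2(\cdot,0)$ may contain a $\mu_3$-type component that vanishes only at $\lambda=0$, $\partial_\lambda\widetilde{f}_2$ also carries an $e^x$-growing piece at $+\infty$; when paired against the exponentially decaying $f_j(\cdot,0)=\beta^{-1}(p)\phi'$ in $\widetilde{b}_{j2}$, this produces a nonzero constant limit for $\partial_\lambda\widetilde{b}_{j2}(x,0)$ as $x\to+\infty$. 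Matching this constant against $-\beta^{-1}(p)\<\Lambda_p\phi,\phi\>\lim_{x\to+\infty}\eta_1(x)=\beta^{-1}(p)\frac{2-p}{p-1}\|\phi\|_{L^1}$ (computed via \eqref{eq:asseta1} and $\theta_1=-\theta_3$) verifies the $\eta_1$-coefficient already found, and matching the subleading $e^{-|x|}$ correction against $\alpha\eta_2=\alpha\theta_3\phi$ then pins down $\alpha$ to the stated value. All the technical effort lies in the careful tracking of these subleading terms — the $\partial_\lambda\widetilde{m}_2$ contribution and the precise coefficient of the $e^x$-growing part of $\partial_\lambda\widetilde{f}_2$ — which is what produces the specific combination $\tfrac{p}{p-1}+\theta_2/\theta_3$ appearing in the numerator.
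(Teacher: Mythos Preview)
Your computation for \eqref{eq:dual01} via the telescoping identity $(\phi'u_0'-\phi''u_0)'=-\phi'$ is correct and in fact cleaner than the paper's asymptotic matching; this is a genuinely nicer route to the first formula.

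For \eqref{eq:dual02} there is a real gap. Your identification of the form $-\beta^{-1}(p)\<\Lambda_p\phi,\phi\>\eta_1+\alpha\eta_2$ is on the right track, though you should justify why the kernel element is a multiple of $\eta_2$ alone: the full ODE kernel of $\mathcal{L}^*$ is $\Span\{1,\eta_2,g\}$ with $g$ exponentially growing, and you need the decay of $\partial_\lambda\widetilde{b}_{j2}(\cdot,0)+\beta^{-1}(p)\<\Lambda_p\phi,\phi\>\eta_1$ as $x\to-\infty$ (which follows from Lemmas \ref{lem:estimate_f3} and \ref{lem:estimate_tildef2}) to rule out the other two.

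The serious problem is your plan to pin down $\alpha$ by matching at $x\to+\infty$. The asymptotics of $\widetilde{f}_2$ and $\partial_\lambda\widetilde{f}_2$ at $+\infty$ are \emph{not} available: Lemma \ref{lem:estimate_tildef2} controls $\widetilde{m}_2$ only for $x\le x_0$, and at $+\infty$ the expansion $\widetilde{f}_2=A_1f_1+A_2F_2+A_3F_3$ involves scattering coefficients $A_j(\lambda)$ that are not computed anywhere. In particular the ``precise coefficient of the $e^x$-growing part of $\partial_\lambda\widetilde{f}_2$'' that you say produces the answer is exactly $A_3'(0)$, and you have no formula for it. The paper avoids this by matching at $x\to-\infty$, where all the ingredients are explicit: $\widetilde{f}_2\to 1$, $\partial_\lambda\widetilde{f}_2(x,0)\sim x$ from $\partial_\lambda\mu_2(0)=1$ and \eqref{eq:estimate_tildef233}, and $\partial_\lambda f_3'(x,0)\sim\tfrac12(x+1)e^{x}$ from \eqref{eq:m2prime0}. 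Comparing the $e^x$-coefficients of both sides of the identity at $-\infty$, after the leading $xe^x$ terms cancel, yields the stated value of $\alpha$ directly. Your approach at $+\infty$ cannot be completed with the tools at hand; switch the matching to $-\infty$.
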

 \begin{proof}  The space of the  solutions of equation $ \mathcal{L} ^* u=0$  coincides with
$ \Span \{ 1, \eta _2 , g     \}$ for a function $g$ that explodes exponentially as $x\to \pm \infty$.
By \eqref{eq:soldual} we have $\mathcal{L} ^* \widetilde{b}_{ 32}(\cdot ,\lambda )=-\lambda \widetilde{b}_{ 32}(\cdot ,\lambda )$. By $\widetilde{b}_{ 32}(x,0 ) \xrightarrow{x\to -\infty} 0$
    we can conclude that there exists an $ \alpha \in \R $ such that
$\widetilde{b}_{ 32}(\cdot ,0 )  = \alpha   \beta ^{-1}(p )   \phi   $. We find $\alpha$ by  matching the   asymptotic behaviors: for $x\to -\infty$ we have
\begin{align*}
  & \widetilde{b}_{ 32}(x ,0 )=  - f_3'(x,0 )  \( 1+o(1) \) = -  e^{x }  \( 1+o(1) \) \text{  and }\\& \alpha   \beta ^{-1}(p )   \phi  (x) = \alpha  e^{x }  \( 1+o(1) \)
\end{align*}
  obtaining $\alpha =- 1$. Formula \eqref{eq:dual01}  for $j=3$ follows from \eqref{eq:ker2}.  Case $j=1$  follows from $f_3 (x,0 )=-f_1 (x,0 ) $.  Differentiating in $\lambda $ the equation of $\widetilde{b}_{ j2}(\cdot ,\lambda )$ and using \eqref{eq:dual01} we obtain
\begin{align*}&
    \mathcal{L} ^* \partial _\lambda \widetilde{b}_{ j2}(x ,0 )=- \widetilde{b}_{ j 2}(x ,0 ) =   - (-1) ^{\frac{j-1}{2}}  \beta ^{-1}(p ) \<\Lambda_p  \phi ,\phi   \>    \eta _2 (x ).
\end{align*}
By \eqref{eq:M2.4} we obtain
\begin{align}\label{eq:predual021}
   \mathcal{L} ^* \( \partial _\lambda \widetilde{b}_{ j2}(\cdot  ,0 ) + (-1) ^{\frac{j-1}{2}}    \beta ^{-1}(p ) \<\Lambda_p  \phi ,\phi   \>    \eta _1 \) =0.
\end{align}
Since, by  \eqref{eq:ker2},    \eqref{eq:estimate_tildef233} and \eqref{eq:bxlambdatil} and by Lemma \ref{lem:jostlambda0} the      function   in \eqref {eq:predual021}   decays to 0 for $x\to -\infty$,  there exists an $\alpha  _j\in \R$ such that
\begin{align}\label{eq:dual021}
 \partial _\lambda \widetilde{b} _{ j2}(\cdot  ,0 ) +  (-1) ^{\frac{j-1}{2}} \beta ^{-1}(p ) \<\Lambda_p  \phi ,\phi \>    \eta _1 =\alpha _j  \eta _2 .
\end{align}
 We find $\alpha _j$ by matching the asymptotic behaviors for $x\to -\infty$. By \eqref{eq:ker2} and \eqref{eq:lambdap}
 \begin{align}\nonumber
   \eta _1 (x) &= \frac{\theta _1}{2 } \int _{-\infty}^{x}y\phi ' (y)dy + \theta _1 \frac{1}{p-1}  \int _{-\infty}^{x} \phi  (y)dy + \theta _ 2 \phi   (x)\\& =  \frac{\theta _1}{2 } x \phi  (x) +\left [  \theta _1  \( \frac{1}{p-1}        - \frac{1}{2}\) + \theta _ 2 \right ]  \phi  (x) +o \(  \phi (x)  \)  \text{ for }x\to -\infty. \label{eq:aseta1-infty}
 \end{align}
By \eqref{eq:sum_of_mus}, \eqref{eq:m3_estimate_partial}  and \eqref {eq:estimate_tildef233} we have
\begin{align*}
   \widetilde{b}_{ 32}(x  ,\lambda ) = e^{-\mu _1 ( \lambda )x} \( \mu _2( \lambda ) - \mu _3( \lambda )\) +o\(     \frac{e^{-\mu _1( \lambda ) x}}{x} \)   \text{ for } x\to -\infty
\end{align*}
so that
\begin{align*}
   \partial _\lambda \widetilde{b}_{ 32}(x  ,0 )&= e^{ x}   \(   \frac{3}{2} -\frac{x}{2}   +o(1) \)  \text{ for } x\to -\infty
 \end{align*}
 Matching the asymptotic terms in \eqref{eq:dual021}, after a cancellation of the two dominating terms in the left hand side and by $\theta _1/\theta_3 =-1$, we obtain  the following, \small
 \begin{align*} \alpha  _3\beta  (p ) \theta _3&=  \frac{3}{2 }  -    \beta ^{-1}(p ) \frac{1}{\theta _3} \left [  \theta _1  \( \frac{1}{p-1}        - \frac{1}{2}\) + \theta _ 2 \right ] \beta  (p )    =1+\frac{1}{p-1} -\frac{\theta _ 2}{\theta _ 3}.
 \end{align*}\normalsize
By \eqref{eq:estimate_tildef233} and Lemma \ref{lem:jostlambda0}     we have, using $\mu _2'(0)=1$,
\begin{align*}
   \partial _\lambda \widetilde{b}_{ 12}(x  ,0 )&=  \partial _\lambda f_1 (x,0 ) \widetilde{f}_2' (x,0 ) +   f_1 (x,0 ) \partial _\lambda \widetilde{f}_2' (x,0 ) - \partial _\lambda f_1' (x,0 ) \widetilde{f}_2  (x,0 ) -   f_1' (x,0 ) \partial _\lambda \widetilde{f}_2 (x,0 ) \\& =o(e^x)-e^{x}\mu _2'(0)  - \( \frac{2}{p-1}e^x   +\frac{x}{2}e^x \) ' +e^x \mu _2'(0) x\\ &=      \frac{1}{2 }  x  e^{x }-\(  \frac 32+\frac{2}{p-1}  \)     e^{x } +o(e^{x }) \text{ for $x\to -\infty$}.
 \end{align*}
By matching  the asymptotic behaviors for $x\to -\infty$ in \eqref{eq:dual021} we obtain \small
 \begin{align*} \alpha  _1\beta  (p ) \theta _3&= -\(  \frac 32+\frac{2}{p-1}  \)     + \frac{\theta_2}{\theta _3}- \(     \frac{1}{p-1} -\frac{1}{2}  \)   = -1-\frac{3}{p-1} +  \frac{\theta_2}{\theta _3} .
 \end{align*}
\normalsize

 \end{proof}

 In general $\tilde{f}_2$ increases exponentially as $x\to \infty$ even if $\Re \lambda=0$.
 In the following, we modify $\tilde{f}_2$ to be globally bounded.
We first examine the solutions of $\mathcal{L}u=0$.
 \begin{lemma}\label{lem:structLkernel}
     The space of solutions of $\mathcal{L}u=0$ is spanned by $\{\phi', h_2,h_3\}$, where $h_2,h_3$ are even functions satisfying $\lim_{x\to \infty }h_2(x)=1$ and $\lim_{x\to \infty}h_3(x)=\infty$.
     In particular, there exists no solution of $\mathcal{L}u=0$ satisfying $\lim_{x\to -\infty} u(x)=1$ and $\lim_{x\to +\infty} |u(x)|=\infty$.
 \end{lemma}

 \begin{proof}
     Since $\mathcal{L}u=0$ is an 3rd order ordinal differential equation, the solutions are determined by $u(0)$, $u'(0)$ and $u''(0)$.
     Since $\phi' $ solves $\mathcal{L}u=0$ and $\phi'(0)=\phi'''(0)=0$ since $\phi$ is an even function, it suffices to consider the remaining 2 dimensional space of solutions with $u'(0)=0$.
     Now, since $\mathcal{L}u=0$ is equivalent to $L_+ u=c$ for some $c\in \R$ and $L_+$ preserves the parity, we see that solutions of $\mathcal{L}u=0$ with $u'(0)=0$ are even.
     From Lemma \ref{lem:extrajost}, there exist solutions of $\mathcal{L}u=0$ satisfying $\lim_{x\to +\infty}u(x)=1$ and solutions satisfying $\lim_{x\to +\infty}u(x)=\infty$ (recall $\mu_3(0)=1$), yielding the desired $h_2$ and $h_3$.
     Finally, if $\lim_{x\to +\infty}u(x)=\infty$, then $u$ has to have $h_3$ component.
     However, this will imply $\lim_{x\to -\infty}u(x)=\infty$.
 \end{proof}

 Now, since $\widetilde{f}_2(x,\lambda), f_3(x,\lambda)\in \mathrm{Span}\{f_1(x,\lambda),F_2(x,\lambda),F_3(x,\lambda)\}$, we can write
\begin{align}
     \widetilde{f}_2(x,\lambda)&=c_{21}(\lambda)f_1(x,\lambda)+c_{22}(\lambda)F_2(x,\lambda)+c_{23}(\lambda)F_3(x,\lambda),\label{eq:connect2}\\
    f_3(x,\lambda)&=c_{31}(\lambda)f_1(x,\lambda)+c_{32}(\lambda)F_2(x,\lambda)+c_{33}(\lambda)F_3(x,\lambda).\label{eq:connect3}
\end{align}

\begin{lemma}\label{lem:c23vanish} The
$c_{jk}(\lambda)$ ($j=2,3$, $k=1,2,3$)   in \eqref{eq:connect2}--\eqref{eq:connect3} are analytic in $\lambda$.
Furthermore, we have $c_{33}(\lambda)=D(-\lambda)$ and $c_{23}(0)=0$.
\end{lemma}

\begin{proof}
    Set $c_{1j}$ by $F_1(x,\lambda)=c_{11}(\lambda)f_1(x,\lambda)+c_{12}(\lambda)F_2(x,\lambda)+c_{13}F_3(x,\lambda)$.
    Then, the $3\times 3$ matrix $C=(c_{jk})_{1\leq j,k\leq 3}$ becomes invertible and since it can be determined by the values of $F_j(0,\lambda)$ ($j=1,2,3$), $f_1(0,\lambda)$, $\widetilde{f}_2(0,\lambda)$ and $f_3(0,\lambda)$ which are all analytic, we have the analyticity of $c_{jk}$.

    Next, by the symmetry in \eqref{eq:symmetryf13} we have $m_3(x,\lambda ) =m_1(-x,-\lambda )$ for
$  \lambda \in \im \R $ so that by the   limit \eqref{eq:limm1-infty} we have
\begin{align}\label{eq:m3+infty}
  c_{33}(\lambda)=\lim  _{x\to +\infty}  m_3(x,\lambda ) &=    \lim  _{x\to -\infty}  m_1(x,-\lambda ) =D(-\lambda )  .
\end{align}
Finally, suppose $c_{23}(0)\neq 0$.
Then, $\widetilde{f}_2(x,0)=\widetilde{m}_2(x,0)$ will satisfy $\lim_{x\to -\infty}\widetilde{f}_2(x,0)=1$ and $\lim_{x\to -\infty} |\widetilde{f}_2(x,0)|=\infty$.
Since $\widetilde{f}_2(x,0)$ solves $\mathcal{L}u=0$ this contradicts with Lemma \ref{lem:structLkernel}.
Therefore, we have $c_{23}(0)=0$.
\end{proof}

Now, we set
\begin{align}\label{eq:defc0}
    c_0(\lambda):=\frac{\lambda c_{23}(\lambda)}{D(-\lambda)}.
\end{align}
Notice that from Lemma \ref{lem:c23vanish} and \eqref{eq:derevans0}, $c_0$ is analytic in the neighborhood of $0$.
Set
\begin{align}
  \label{eq:deff2}
  f_{2}(x,\lambda ) := \lambda  \widetilde{f}_{2}(x,\lambda ) -c_0(\lambda){f}_{3}(x,\lambda ).
\end{align}
Then, from the choice of $c_0(\lambda)$, we have $f_2(x,\lambda)\in \mathrm{Span}\{f_1(x,\lambda),F_2(x,\lambda)\}$.
Indeed, from \eqref{eq:connect2}, \eqref{eq:connect3}, \eqref{eq:defc0} and Lemma \ref{lem:c23vanish},
\begin{align*}
    f_2(x,\lambda)=&\left(\lambda c_{21}(\lambda)-c_0(\lambda)c_{31}(\lambda)\right) f_1(x,\lambda)+\left(\lambda c_{22}(\lambda)-c_0(\lambda)c_{32}(\lambda)\right) F_2(x,\lambda)\\&
    +\cancel{\left(\lambda c_{23}(\lambda)-c_0(\lambda)D(-\lambda)\right)} F_3(x,\lambda).
\end{align*}
The new Jost function $f_2$ satisfies better estimates than $ \tilde{f}_2(x , \lambda ) $.

\begin{lemma}
  \label{lem:bdm2}  For any preassigned $a_0>0$  there are constants     $C_{\alpha\beta}>0$ such that \begin{align} \label{eq:estm21}
   |  \partial_x^\alpha  \partial_\lambda^\beta   m_2(x  , \lambda ) |  \le C _{\alpha\beta}    \text{ for all $x\in \R$ and all $\lambda \in \(  -\im a_0, \im a_0\) $}.
\end{align}
\end{lemma}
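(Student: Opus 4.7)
The plan is to establish \eqref{eq:estm21} by splitting $\R$ into a left tail $x\le x_0$, a bounded intermediate strip $[x_0,x_2]$, and a right tail $x\ge x_2$, and using a different representation of $m_2$ on each region. The decisive point is that the value of $c_0(\lambda)$ fixed in Lemma \ref{lem:choicec0} is exactly the one that eliminates the $F_3$-component of $f_2$ at $+\infty$, where $F_3$ is the Jost function introduced in part 3 of Lemma \ref{lem:extrajost}. Without this elimination $m_2$ would inherit at $+\infty$ the exponential growth of $\tilde m_2$ recorded in Lemma \ref{lem:estm2}.

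On the left tail I would invoke the defining identity
\begin{align*}
m_2(x,\lambda)=\lambda\,\tilde m_2(x,\lambda)+c_0(\lambda)\,e^{(\mu_3-\mu_2)x}m_3(x,\lambda).
\end{align*}
Lemma \ref{lem:estimate_tildef2} bounds $\tilde m_2$ and all its $(x,\lambda)$-derivatives on $(-\infty,x_0]$, Lemma \ref{lem:estimate_f3} bounds $m_3$ and its derivatives globally, $c_0(\lambda)$ is smooth by Lemma \ref{lem:choicec0}, and $|e^{(\mu_3-\mu_2)x}|\le 1$ on $(-\infty,0]$ since $\Re(\mu_3-\mu_2)>0$ uniformly for $\lambda$ in a slight enlargement of the stated interval. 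On the compact strip $[x_0,x_2]$ joint smoothness in $(x,\lambda)$ of ODE solutions yields the estimate at once.

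The substance is on the right tail, where I would expand $f_2$ in the basis $\{f_1,F_2,F_3\}$ provided by Lemmas \ref{lem:estimate_f1} and \ref{lem:extrajost}:
\begin{align*}
f_2(x,\lambda)=A_1(\lambda)f_1(x,\lambda)+A_2(\lambda)F_2(x,\lambda)+A_3(\lambda)F_3(x,\lambda),\qquad x\ge x_2,
\end{align*}
and prove the key claim $A_3(\lambda)\equiv 0$. Multiplying by $e^{-\mu_3 x}$ and letting $x\to +\infty$, the left hand side tends to $A_3(\lambda)$ since $m_1,M_2,M_3\to 1$ while $e^{(\mu_1-\mu_3)x}$ and $e^{(\mu_2-\mu_3)x}$ both decay. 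On the other hand $e^{-\mu_3 x}f_2=e^{(\mu_2-\mu_3)x}m_2$, and feeding \eqref{eq:formm2} into this identity I would check term by term that, after multiplication by $e^{(\mu_2-\mu_3)x}$, each Volterra summand acquires a decaying exponential prefactor $e^{(\mu_j-\mu_3)x}$ with $j\in\{1,2\}$; the potentially growing factor $(1+e^{\Re(\mu_3-\mu_2)y})$ inherited from Lemma \ref{lem:estm2}, which produces individual summand growth $e^{(\Re(\mu_3-\mu_2)-(p-1))x}$ for $1<p<2$, is exactly annihilated by this prefactor, leaving integrands dominated by $e^{-(p-1)|y|}$, and dominated convergence delivers the limit $0$. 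The first term of \eqref{eq:formm2}, once multiplied by $e^{(\mu_2-\mu_3)x}$, simply becomes $G(x,\lambda)$, which tends to $0$ by the defining property of $c_0$. Hence $A_3\equiv 0$ and
\begin{align*}
m_2(x,\lambda)=A_1(\lambda)\,e^{(\mu_1-\mu_2)x}\,m_1(x,\lambda)+A_2(\lambda)\,M_2(x,\lambda)\qquad\text{for }x\ge x_2,
\end{align*}
from which \eqref{eq:estm21} follows via Lemmas \ref{lem:estimate_f1}, \ref{lem:extrajost}, and $\Re(\mu_1-\mu_2)<0$. The coefficients $A_1,A_2$ are extracted by solving a $3\times 3$ linear system with determinant the Wronskian $W[f_1,F_2,F_3]$, which is $x$-independent (no $u''$ term in the ODE behind \eqref{eq:lineq1}) and equals the nonzero Vandermonde $\prod_{i<j}(\mu_i-\mu_j)$; they therefore inherit the smoothness and $\lambda$-uniform boundedness on compacta of the underlying Jost functions. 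Derivatives of $m_2$ in $(x,\lambda)$ are handled by the analogous computation using the $\partial_x^\alpha\partial_\lambda^\beta$ estimates already available in the cited lemmas.

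The main obstacle I foresee is precisely the cancellation proving $A_3\equiv 0$: individual summands of \eqref{eq:formm2} genuinely blow up at $+\infty$ when $p$ is close to $1$, so the argument must bundle them through the choice of $c_0$ rather than estimate them one by one. Once this point is settled, the rest is bookkeeping.
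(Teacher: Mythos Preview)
Your proposal is correct and follows essentially the same route as the paper's proof: both establish that $f_2\in\Span\{f_1,F_2\}$ by showing $e^{-\mu_3 x}f_2(x,\lambda)\to 0$ as $x\to+\infty$, which amounts to the vanishing of $G(x,\lambda)$ together with the decay of the remaining summands in \eqref{eq:formm2} after multiplication by $e^{(\mu_2-\mu_3)x}$. Your treatment is in fact more explicit than the paper's: you spell out the representation $m_2=\lambda\tilde m_2+c_0 e^{(\mu_3-\mu_2)x}m_3$ used on the left tail, and you track the cancellation between the growth $e^{(\Re(\mu_3-\mu_2)-(p-1))x}$ of the Volterra terms (inherited from the bound on $\tilde m_2$ in Lemma \ref{lem:estm2}) and the decaying prefactor $e^{(\mu_2-\mu_3)x}$, whereas the paper records this step in one line.
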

\begin{proof}
  We know from Lemma \ref{lem:estimate_tildef2} that \eqref{eq:estm21} holds for $x\le 0$.  For $x\ge 0$ it follows from  
 $    {f}_2(\cdot  , \lambda )  \in \Span \{    {f}_1(\cdot  , \lambda  ), {F}_2(\cdot  , \lambda  )        \}$ and  from Lemmas \ref{lem:estimate_f1} and  \ref{lem:extrajost}.

\end{proof}

Consider now the Wronskian, see  \eqref{eq:wronskdef},   \begin{align}&    W(\lambda ):=  W [{f}_1(y, \lambda ),{f}_2(y  , \lambda ), {f}_3(y  , \lambda )]  .\label{eq:wronsk}
 \end{align}
 We claim that
 \begin{align}\label{eq:wronsk1}
 W(\lambda )  = \lambda D(\lambda )  W_0(\lambda ).
\end{align}
 To prove \eqref{eq:wronsk1} notice that
 \begin{align*}
   W(\lambda )=\lambda W [{f}_1(y, \lambda ),\widetilde{{f}}_2(y  , \lambda ), {f}_3(y  , \lambda )].
 \end{align*}     For
 \begin{align*}
  & Y (x, \lambda )  :=( {f}_1(x, \lambda ) , {f}_1'(x, \lambda ) , {f}_1'(x, \lambda ))^\intercal \text{   and } Z(x,\lambda ):=
  ( z_1  , z_2  , z_3 )^\intercal \text{ with }
  \\&  z_1 = \widetilde{f}_2' (x,\lambda ) f_3''(x,\lambda ) -\widetilde{f}_2'' (x,\lambda ) f_3' (x,\lambda ), \\&
   z_2 =  \widetilde{f}_2'' (x,\lambda ) f_3 (x,\lambda ) -\widetilde{f}_2 (x,\lambda ) f_3''(x,\lambda )  \text{ and} \\& z_3 =   \widetilde{f}_2 (x ,\lambda ) f_3'(x,\lambda )  -   \widetilde{f}_2' (x,\lambda ) f_3 (x,\lambda ),
 \end{align*}
 then $Y$ solves \eqref{eq:systA} and $Z$ solves the dual system $\frac{d}{dx}Z =-A^\intercal Z$   and we have
 \begin{align}\label{eq:connwron}
    W [{f}_1(x, \lambda ),\widetilde{{f}}_2(x   , \lambda ), {f}_3(x  , \lambda )] =  Y^\intercal (x, \lambda)  Z (x, \lambda).
 \end{align}
 By Lemmas    \ref{lem:estimate_f1},     \ref{lem:estimate_f3}   and \ref{lem:estimate_tildef2} we have
 \begin{align*}&
   e^{-\mu_ {1}x} Y (x, \lambda ) \xrightarrow{x\to +\infty}  {Y}_+(\lambda)  =( 1 , \mu_ {1} , \mu_ {1} ^2)^\intercal \text{ and}\\&  e^{ \mu_ {1}x} Z (x, \lambda ) \xrightarrow{x\to -\infty} {Z}_-(\lambda) =( \mu_ {2}\mu_ {3} (\mu_ {3} -\mu_ {2}), \mu_ {2} ^2-\mu_ {3} ^2 ,\mu_ {3} -\mu_ {2} )^\intercal
 \end{align*}
 with, by an elementary computation which uses \eqref{eq:sum_of_mus}  and  \eqref{eq:vandermonde},
 \begin{align*}
    Y^\intercal _+ (\lambda) Z_-(\lambda)=  W _0\(\lambda\) .
 \end{align*}
 Then we obtain the following which, with    \eqref{eq:connwron}, yields  \eqref{eq:wronsk1},
 \begin{align*}
     Y^\intercal (x, \lambda)  Z (x, \lambda) =D(\lambda ) W _0\(\lambda\).
 \end{align*}

For $R(\lambda ) = R_{\mathcal{L} }(\lambda )$ and
 by the variation of parameter formula, for $\Re \lambda \ge 0$  we have    \small
 \begin{align} \label{eq:res0}
   R(\lambda ) g     & =   R_1(\lambda ) g + R_2(\lambda ) g  \text{  where  } \\ R_1(\lambda ) g& (x)= -
     {f}_2(x , \lambda )   \int_  x ^{+\infty}   \frac{g(y)  b_{13}(y,\lambda )}{W(\lambda )} dy  \text{   and} \label{eq:res1}
    \\ R_2(\lambda ) g&  =R _{23}(\lambda ) g+ R _{21}(\lambda ) g \text{ with}\label{eq:res2} \\
   R _{23}(\lambda ) g&(x)=
    - {f}_1(x , \lambda )  \int_  {-\infty} ^x  \frac{g(y)  b_{23}(y,\lambda )}{W(\lambda )} dy   \text{ , }   R _{21}(\lambda ) g (x) ={f}_3(x , \lambda )   \int_  x ^{+\infty} \frac{g(y)  b_{12}(y,\lambda )}{W(\lambda )} dy \nonumber
\end{align} \normalsize
where we  use definition  \eqref{eq:bxlambda} for $b_{j2}$, $j=1,3$, employing the  $f_2(x,\lambda )  $ in \eqref{eq:deff2}.  We will also use  \small
\begin{align} \label{eq:tilres0}
   R(\lambda ) g     & =   \widetilde{R}_1(\lambda ) g + \widetilde{R}_2(\lambda ) g  \text{  where  } \\ \widetilde{R}_1(\lambda ) g& (x)= -
     \widetilde{{f}}_2(x , \lambda )   \int_  x ^{+\infty}   \frac{g(y)  b_{13}(y,\lambda )}{D(\lambda ) W _0\(\lambda\)} dy  \text{   and} \label{eq:tilres1}
    \\ \widetilde{R}_2(\lambda ) g&  =\widetilde{R} _{23}(\lambda ) g+ \widetilde{R} _{21}(\lambda ) g \text{ with}\label{eq:tilres2} \\
   \widetilde{R} _{23}(\lambda ) g&(x)=
    - {f}_1(x , \lambda )  \int_  {-\infty} ^x  \frac{g(y)  \widetilde{b}_{23}(y,\lambda )}{D(\lambda ) W _0\(\lambda\)} dy    \text{ and  }       \widetilde{R} _{21}(\lambda ) g  (x) ={f}_3(x , \lambda )   \int_  x ^{+\infty} \frac{g(y)  \widetilde{b}_{12}(y,\lambda )}{D(\lambda ) W _0\(\lambda\)} dy. \nonumber
\end{align} \normalsize


\section{Proof of Proposition \ref{prop:smooth2} }\label{sec:proofsmooth2}

We start by first proving the inequality \eqref{eq:smooth21}.
We  distinguish between $\lambda$ close to 0 and away from 0.  We start with the  small $\lambda$  case.
\begin{lemma}\label{lem:smooth2small} There exists an $\varepsilon _4>0$  and a  $C(  p)>0$ such that
  \begin{align}\label{eq:smooth21small}
      \left \|       R (\im  \tau )      g     \right \|   _{   \widetilde{\Sigma}    } \le C(p)          \left \|        g     \right \| _{   L ^{1,1}\( \R\)    }  \text{ for all $ g\in \ker P  $ and } \tau  \in  (-\varepsilon _4, \varepsilon _4  )\backslash \{ 0  \}.
   \end{align}

\end{lemma}

\begin{proof} For $\lambda =\im \tau$, using \eqref{eq:tilres0} we set
\begin{align*}
  R  (\lambda ) g=\frac{ N  (\lambda )g }{W(\lambda )} = \frac{ \widetilde{N}  (\lambda )g }{D(\lambda ) W _0\(\lambda\)} .
\end{align*}
Next we consider
\begin{align*}
  R^{(0)}(\lambda ) g     & :=  \frac{1}{D(\lambda ) W _0\(\lambda\)} \sum _{i=0}^{1}   \lambda ^i   \partial _z ^i  \widetilde{N}( 0) g    \\& =   \frac{1}{D(\lambda ) W _0\(\lambda\)}
  \(  - {f}_1(x , 0 )  \int_  {-\infty} ^x   g(y)  \widetilde{b}_{23}(y,0 )   dy  + {f}_3(x , 0 )   \int_  x ^{+\infty}  g(y)  \widetilde{b}_{12}(y,0 )  dy \) \\& +  \frac{\lambda}{D(\lambda ) W _0\(\lambda\)}
  \(  - \partial _\lambda {f}_1(x , 0 )  \int_  {-\infty} ^x   g(y)  \widetilde{b}_{23}(y,0 )   dy  + \partial _\lambda{f}_3(x , 0 )   \int_  x ^{+\infty}  g(y)  \widetilde{b}_{12}(y,0 )  dy \) \\& +    \frac{\lambda}{D(\lambda ) W _0\(\lambda\)}
  \(  -  {f}_1(x , 0 )  \int_  {-\infty} ^x   g(y)  \partial _\lambda\widetilde{b}_{23}(y,0 )   dy  +  {f}_3(x , 0 )   \int_  x ^{+\infty}  g(y)  \partial _\lambda\widetilde{b}_{12}(y,0 )  dy \)  \\& =:  \frac{1}{D(\lambda ) W _0\(\lambda\)}   A_0+\frac{\lambda}{D(\lambda ) W _0\(\lambda\)} A_{11}+\frac{\lambda}{D(\lambda ) W _0\(\lambda\)} A_{12}
\end{align*}
 where the contribution  of $\widetilde{R}_1(\lambda )$ is null because of Corollary \ref{lem:b13at0}.
  We claim that, since $g \in \ker P$, we have $ R^{(0)}(\lambda ) g =0$.
  From  \eqref{eq:jostlambda01} and \eqref{eq:dual01} we have
\begin{align*}
   A_0=   \beta ^{-1}(p ){f}_3(x , 0 ) \<\Lambda_p  \phi   ,\phi   \>  \< g ,   \eta _2 \> =0 \text{ by }g \in \ker P.
\end{align*}
We have  \small
\begin{align*}
   A_{11}&=  -    \frac{\beta ^{-2}(p )}{\theta_3} \(  \frac{1}{p-1} \phi ' +  \Lambda_p  \phi   \)  \int_  {-\infty} ^x   g(y)   \eta _2(y)   dy   +   \frac{\beta ^{-2}(p )}{\theta_3} \(  \frac{1}{p-1} \phi ' -  \Lambda_p  \phi   \)    \int_  x ^{+\infty}  g(y)       \eta _2(y)  dy \\& =\cancel{ \frac{\beta ^{-2}(p )}{\theta_3}  \Lambda_p  \phi \< g ,   \eta _2 \>} -    \frac{\beta ^{-2}(p )}{\theta_3} \frac{\phi '}{p-1}    \int_  {-\infty} ^x   g(y)   \eta _2(y)   dy  +   \frac{\beta ^{-2}(p )}{\theta_3}   \frac{\phi '}{p-1}  \int_  x ^{+\infty}  g(y)       \eta _2(y)  dy ,
\end{align*}\normalsize
where the cancelled term is null by $g \in \ker P.$   Similar cancellations yield \small
\begin{align*}
   A_{12}&=   \frac{\beta ^{-2}(p )}{\theta_3}   \phi '  \<  g,   \eta _1  \> - \beta ^{-1}(p ) \alpha_3 \phi '  \int_  {-\infty} ^x   g(y)      \eta _2(y) dy  \\&   -    \(\alpha_3+\frac{2}{(p-1) \beta  (p ) \theta _3} \)  \beta ^{-1}(p )  \phi '   \int_  x ^{+\infty}  g(y)    \eta _2(y) dy =-\frac{2}{(p-1) \beta  ^2 (p ) \theta _3}  \phi '   \int_  x ^{+\infty}  g(y)    \eta _2(y) dy.
\end{align*} \normalsize
Then we obtain the following, which proves  $ R^{(0)}(\lambda ) g =0$ for $g \in \ker P $,
\begin{align*}
   A_{11}+ A_{12}= - \frac{\beta ^{-2}(p )}{\theta_3} \frac{\phi '}{p-1}  \< g ,   \eta _2 \> =0.
\end{align*}
Since
\begin{align*}
   R^{(0)}(\lambda ) g =  \frac{1 }{\lambda D(\lambda ) W _0\(\lambda\)} \sum _{i=0}^{2}   \frac{\lambda ^i}{i!}   \partial _z ^i  {N}( 0) g=0
\end{align*}
we have for $ R ^{(1)} (\lambda ) g=  R  (\lambda ) g  -R ^{(0)} (\lambda ) g$,
by the Taylor expansion formula we have
\begin{align}
  R ^{(1)} (\lambda ) g & = R _{ 1}^{(1)}(\lambda  ) g+R _{21}^{(1)}(\lambda  ) g + R _{23}^{(1)}(\lambda  ) g     \text{   where} \label{eq:defR(1)}
\end{align}
\begin{align}\label{eq:defR211}
  R _{21}^{(1)}(\lambda  ) g &=  \frac{2^{-1}}{\lambda D(\lambda )W_0(\lambda ) }  \int _{0} ^{\lambda}  dz  (\lambda -z)^{2}   \int_  x ^{+\infty}  g(y)  \partial _z ^3 \(     {f}_3(x ,z  )   {b}_{12}(y,z )  \)      dy , \\ \label{eq:defR211b}
     R _{23}^{(1)}(\lambda  ) g &= - \frac{2^{-1}}{\lambda D(\lambda )W_0(\lambda ) }  \int _{0} ^{\lambda}  dz (\lambda -z)^{2}   \int_  {-\infty} ^x  g(y)  \partial _z ^3 \(     {f}_1(x ,z  )   {b}_{23}(y,z )  \)      dy \text{   and }
\\  \label{eq:defR2111}
  R _{ 1}^{(1)}(\lambda  ) g  & =  -\frac{1}{\lambda D(\lambda )W_0(\lambda ) }      \int_  x ^{+\infty}  g(y) \(  {f}_2(x ,\lambda   )   {b}_{13}(y,\lambda )   - \frac{\lambda ^2}{2}  {f}_2(x ,0   )   \partial _\lambda ^2 {b}_{13}(y,0 ) \)  dy.
\end{align}
We examine each of these  three terms.  We claim that
\begin{align}\label{eq:estR12}
  \left |  R _{21}^{(1)}(\lambda  ) g (x)  \right | \lesssim  \int_  x ^{+\infty}   \< x-y\> ^3      e^{-  |x-y|}  |g(y) | dy
\end{align}
which in turn, by the convolution Young's inequality,  implies
\begin{align}\label{eq:boundr121}
   \|  R _{21}^{(1)}(\lambda  ) g \| _{L^1(\R )\cap L^\infty (\R )}\lesssim  \| g\| _{L^1(\R )} .
\end{align}
To obtain \eqref{eq:estR12} notice that
\begin{align*}&
   \partial _z ^3 \(     {f}_3(x ,z  )   {b}_{12}(y,z )  \)     =    \partial _z ^3  \left [  e^{ \mu _3 (z ) (x-y)  }   {m}_3(x ,z  )
\gamma (y,z) \right ]  \text{ where} \\&  \gamma (y,z) = \(  \mu _2 (\lambda )-\mu _1 (\lambda )    \)   m_1 (y,z )  m_2 (y,z )  +    m_1 (x,z )  m_2 ' (y,z  ) - m_1 '(y,z )  m_2 (y,z)       .
\end{align*}
Since by   Lemmas \ref{lem:estimate_f1}, \ref{lem:estimate_f3} and \ref{lem:bdm2} for $ y\ge x$ for appropriate constants  $ C _{nm\ell}$  we have
\begin{align*}
\left  | \partial _x ^n \partial _y ^m \partial _z ^\ell   \left [      {m}_3(x ,z  )
\gamma (y,z) \right ]  \right |\le C _{nm\ell}
\end{align*}  if, as in our case,    $z$ is close to 0,      by    $D''(0) \neq 0$ and by $\Re \mu _3(z)\ge 1$, see \eqref{eq:detmu13},    we obtain
\begin{align*}&
  | \partial _z ^3 \(     {f}_3(x ,z  )   {b}_{12}(y,z )  \)     |\lesssim    \< x-y\> ^3     e^{-  |x-y|} ,
\end{align*}
yielding \eqref{eq:estR12}.
 Since with the same proof    we obtain also
\begin{align}\label{eq:estR22}
  \left |  R _{23}^{(1)}(\lambda  ) g (x)  \right | \lesssim  \int_  {-\infty} ^x   \< x-y\> ^3      e^{-  |x-y|}  |g(y) | dy
\end{align}
we conclude also
\begin{align}\label{eq:boundr123}
   \|  R _{23}^{(1)}(\lambda  ) g \| _{L^1(\R )\cap L^\infty (\R )}\lesssim  \| g\| _{L^1(\R )} .
\end{align}
Next we examine $R _{ 1}^{(1)}(\lambda  )  g$, which is subtler.  Since $\partial _\lambda ^2    {b}_{13}   (y,0 )= \partial _\lambda ^2  \(    e^{ \mu _2  y} {b}_{13}  \) (y,0 )$   by Corollary \ref{lem:b13at0}, we can write
\begin{align*} &
   {f}_2(x ,\lambda   )   {b}_{13}(y,\lambda )   - \frac{\lambda ^2}{2}  {f}_2(x ,0   )   \partial _\lambda ^2 {b}_{13}(y,0 )\\& =
 {f}_2(x ,\lambda   )  e^{-\mu _2(\lambda) y}  \(   e^{ \mu _2(\lambda) y} {b}_{13}(y,\lambda ) \)  - \frac{\lambda ^2}{2}  {f}_2(x ,0   )   \partial _\lambda ^2  \(    e^{ \mu _2  y} {b}_{13}  \) (y,0 ) \\& =
e^{-\mu _2(\lambda) y}   \(    {f}_2(x ,\lambda   )    e^{ \mu _2(\lambda) y} {b}_{13}(y,\lambda ) -   \frac{\lambda ^2}{2}  {f}_2(x ,0   )   \partial _\lambda ^2  \(    e^{ \mu _2  y} {b}_{13}  \) (y,0 )\)
\\& + \frac{\lambda ^2}{2}  \( e^{-\mu _2(\lambda) y} -1  \)   f _2(x ,0   )  \partial _\lambda ^2    {b}_{13}   (y,0 )  \\& =  e^{-\mu _2(\lambda) y} \int _0^\lambda \frac{(\lambda - z) ^2}{2} \partial ^3 _z  \( e^{ \mu _2(z) x}  {m}_2(x ,z   )    e^{ \mu _2(z) y} {b}_{13}(y,z )\) dz\\&  +   \frac{\lambda ^2}{2}  \( e^{-\mu _2(\lambda) y} -1  \)   f _2(x ,0   )  \partial _\lambda ^2    {b}_{13}   (y,0 )    =: A(\lambda , x,y) +B(\lambda , x,y) .
\end{align*}
The $A(\lambda , x,y)$ term   gives a contribution to \eqref{eq:defR2111} because
\begin{align*}
    \frac{1}{|\lambda D(\lambda )W_0(\lambda ) |}      \int_  x ^{+\infty}  |g(y) A(\lambda , x,y)|    dy\lesssim  (1+|x|)^3  \int_  x ^{+\infty}  |g(y)  |    dy
\end{align*}
from the fact that
\begin{align*}
  |A(\lambda , x,y)|\lesssim  (1+|x|)^3\int _{0}^{|\lambda|} (|\lambda |+\sigma)^2 d\sigma  \sim |\lambda |^3  (1+|x|)^3
\end{align*}
by   $|\partial ^n _z  \( {m}_2(x ,z   )    e^{ \mu _2(z) y} {b}_{13}(y,z )\)|\lesssim 1$ by \eqref{eq:b13_estimate_partial} and   \eqref {eq:estm21}.
Next,   Corollary \ref{lem:b13at0}  and \eqref{eq:deff2}   
give
\begin{align}\nonumber
  B(\lambda , x,y) &=   - D''(0) c_0(0)   {\lambda ^2}  \( e^{-\mu _2(\lambda) y} -1  \)   f _3(x ,0   )       \\&  -b_0  c_0(0)  \frac{\lambda ^2}{2}  \( e^{-\mu _2(\lambda) y} -1  \)   f _3(x ,0   ) \phi (y)=:  B_1(\lambda , x,y)+  B_2(\lambda , x,y). \label{eq:defB2}
\end{align}
The second term is easy to treat because
\begin{align*}
    \frac{1}{|\lambda D(\lambda )W_0(\lambda ) |}      \int_  x ^{+\infty}  |g(y) B_2(\lambda , x,y)|    dy&\lesssim    \frac{|\lambda \mu _2(\lambda )| }{|  D(\lambda )W_0(\lambda ) |} |\phi ' (x)|  \int_  x ^{+\infty}  |g(y)  |  \< y \>    \phi (y) dy \\& \lesssim |\phi ' (x)|  \int_  x ^{+\infty}  |g(y)  |    dy .
\end{align*}
Let us set now
\begin{align}\label{def:tildephi}
    \widetilde{\phi} (x) :=    \frac{ \int _{-\infty} ^{x }   \phi   ( y)  dy}{ \int _{\R }     \phi   ( y)  dy}
\end{align}
and let us consider the term
\begin{align}\label{eq:defrhat1}
    \widehat{R} _{ 1}^{(1)}(\lambda  ) g  & :=  - D''(0) c_0(0) \frac{\lambda}{ 2 D(\lambda )W_0(\lambda ) }   f _3(x ,0   )     \int_  x ^{+\infty}  g(y)\( e^{-\mu _2(\lambda) y} -1  \)     dy.
\end{align}
Then we can write
\begin{align}\nonumber
    \widehat{R} _{ 1}^{(1)}(\lambda  ) g  & =  - D''(0) c_0(0) \frac{\lambda}{ 2 D(\lambda )W_0(\lambda ) }   f _3(x ,0   )     \int_  x ^{+\infty}  g(y)\( e^{-\mu _2(\lambda) y} -1  \)  (1- \widetilde{\phi} (y) )  dy   \\& \label{eq:rhats} + D''(0) c_0(0) \frac{\lambda}{ 2 D(\lambda )W_0(\lambda ) }   f _3(x ,0   )   \int_  {-\infty} ^x  g(y)\( e^{-\mu _2(\lambda) y} -1  \)   \widetilde{\phi} (y) dy   \\& -  D''(0) c_0(0) \frac{\lambda}{ 2 D(\lambda )W_0(\lambda ) }   f _3(x ,0   )   \int_  {\R }   g(y)\( e^{-\mu _2(\lambda) y} -1  \)   \widetilde{\phi} (y) dy=\sum _{j=1}^{3} \widehat{R} _{ 1}^{(1j)}(\lambda  ) g .\nonumber
\end{align}
We have, recalling $y^\pm :=\max (0,\pm y),$
\begin{align*}
  |\widehat{R} _{ 1}^{(11)}(\lambda  ) g (x)| \lesssim  \frac{|\mu _2(\lambda)|}{|\lambda |  }    |\phi ' (x)|      \int_  x ^{+\infty}  |g(y)| \< y\> O(e^{-y^+}) dy \lesssim e^{-|x|}  \int_  x ^{+\infty}  |g(y)| dy.
\end{align*}
Similarly
\begin{align*}
  |\widehat{R} _{ 1}^{(12)}(\lambda  ) g (x)| \lesssim  \frac{|\mu _2(\lambda)|}{|\lambda |  }    |\phi ' (x)|     \int_  {-\infty} ^x  |g(y)| \< y\> O(e^{-y^-} )dy \lesssim e^{-|x|} \int_  {-\infty} ^x  |g(y)| dy.
\end{align*}
So far we have proved that
\begin{align} \label{eq:smooth21smallalm}
    \left \|     \(  R  (\im  \tau )   -  \widehat{R} _{ 1}^{(13)}(\im  \tau ) \)    g     \right \|   _{   \widetilde{\Sigma}    } \le C_0          \left \|        g     \right \| _{   L^1\( \R\)    }  \text{ for all $ g\in \ker P  $ and } \tau  \in  (-\varepsilon _4, \varepsilon _4  )\backslash \{ 0  \}.
\end{align}
We finally have the following, which completes the proof of \eqref{eq:smooth21small},
\begin{align*}
   \left \|      \widehat{R} _{ 1}^{(13)}(\im  \tau )     g     \right \|   _{   \widetilde{\Sigma}    } \lesssim   \left \|      \phi '      \right \|   _{   \widetilde{\Sigma}    }   \frac{|\mu _2(\im  \tau )|}{|\tau |} \int _\R  \< y \> |g(y)  \widetilde{\phi} (y)| dy\lesssim \| g\| _{L^1(\R _-)}+ \| g\| _{L ^{1,1}(\R _+)}.
\end{align*}

\end{proof}

Have discussed what happens for  $\lambda $ close to 0,  we consider $\lambda $ away from 0.

\begin{lemma}\label{lem:smooth2large} There exists a  $C(  p, \varepsilon _4)>0$ such that
  \begin{align}\label{eq:smooth21large}
     \sup  _{ \tau \in \R  \backslash (-\varepsilon _4, \varepsilon _4  ) } \left \|     R_{\mathcal{L} } (\im  \tau  )      \right \| _{ \mathcal{L}\( L^1\( \R\),   \widetilde{\Sigma} \)   }\le   C(  p, \varepsilon _4)   .
   \end{align}

\end{lemma}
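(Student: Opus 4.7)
The plan is to bound the kernel of $R_{\mathcal{L}}^{+}(\im\tau)$ directly from the variation-of-parameters decomposition \eqref{eq:res0}--\eqref{eq:res2}. On the region $|\tau|\ge\varepsilon_{4}$ the Evans function stays uniformly away from zero, so no spectral projection is required and each of the three pieces $R_{1}(\im\tau)$, $R_{21}(\im\tau)$ and $R_{23}(\im\tau)$ can be bounded independently, exploiting only the uniform bounds on the Jost functions $m_{j}$ already collected in \S\ref{sec:jost}.

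First I would establish a uniform lower bound of the form $|W(\im\tau)|\gtrsim 1$ on any compact subset of $|\tau|\ge\varepsilon_{4}$ and $|W(\im\tau)|\gtrsim |\tau|^{2}$ as $|\tau|\to\infty$. By \eqref{eq:wronsk1} this reduces to bounding $W_{0}$ and $D$ separately. The discriminant of the characteristic equation \eqref{eq:mus} evaluated at $\lambda=\im\tau$ equals $4+27\tau^{2}>0$, so the three roots $\mu_{j}(\im\tau)$ are distinct for every $\tau\in\R$; by \eqref{eq:vandermonde} and the asymptotics \eqref{eq:muhighen} this makes $|W_{0}(\im\tau)|$ a continuous positive function of $\tau$ satisfying $|W_{0}(\im\tau)|\gtrsim |\tau|$ as $|\tau|\to\infty$. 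For the Evans function, Proposition \ref{prop:noeigh} rules out zeros of $D$ on $\im\R\setminus\{0\}$, and by \cite[Corollary 1.18]{PegoWei1} we have $D(\im\tau)\to 1$ as $|\tau|\to\infty$; continuity then gives $\inf_{|\tau|\ge\varepsilon_{4}}|D(\im\tau)|>0$.

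Next I would estimate the three kernels in \eqref{eq:res0}--\eqref{eq:res2}. For $\lambda=\im\tau$ the identities \eqref{eq:sum_of_mus} give $\Re\mu_{2}=0$ and $\Re\mu_{1}=-\Re\mu_{3}<0$. Writing each Wronskian-type object in the form
\[
 b_{ij}(y,\lambda)=e^{(\mu_{i}+\mu_{j})y}\bigl[(\mu_{j}-\mu_{i})\,m_{i}m_{j}+m_{i}m_{j}'-m_{i}'m_{j}\bigr],
\]
using $\mu_{i}+\mu_{j}=-\mu_{k}$ for $\{i,j,k\}=\{1,2,3\}$, together with the uniform bounds $|m_{j}|,|\partial_{x}m_{j}|\lesssim 1$ from Lemmas \ref{lem:estimate_f1}, \ref{lem:estimate_f3} and \ref{lem:bdm2} and the growth $|\mu_{j}|\lesssim 1+|\tau|^{1/3}$ from \eqref{eq:muhighen}, one obtains
\[
 |f_{k}(x,\lambda)\,b_{ij}(y,\lambda)|\lesssim (1+|\tau|^{1/3})\,e^{\Re\mu_{k}(x-y)}.
\]
In $R_{21}$ one has $k=3$ with $\Re\mu_{3}>0$ and $y\ge x$; in $R_{23}$ one has $k=1$ with $\Re\mu_{1}<0$ and $y\le x$; and in $R_{1}$ one has $k=2$ with $\Re\mu_{2}=0$. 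In every case $e^{\Re\mu_{k}(x-y)}\le 1$ on the corresponding integration region. Dividing by $|W(\im\tau)|$, each of the three kernels is therefore bounded in absolute value by a constant times $(1+|\tau|^{1/3})/|W(\im\tau)|$, which is uniformly bounded (in fact $O(|\tau|^{-5/3})$ as $|\tau|\to\infty$) by the first step. Hence $\|R_{\mathcal{L}}^{+}(\im\tau)g\|_{L^{\infty}(\R)}\lesssim \|g\|_{L^{1}(\R)}$ uniformly in $|\tau|\ge\varepsilon_{4}$, and the embedding $L^{\infty}(\R)\hookrightarrow\widetilde{\Sigma}$, valid because $\sech(\kappa\,\cdot)\in L^{2}(\R)$, yields \eqref{eq:smooth21large}.

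The only delicate point is obtaining a uniform positive lower bound for $|D(\im\tau)|$ at moderate values of $|\tau|$; this is guaranteed by continuity of $D$ together with the absence of non-zero imaginary eigenvalues supplied by Proposition \ref{prop:noeigh}. Compared with the proof of Lemma \ref{lem:smooth2small}, the analysis here is considerably simpler since there is no $1/\lambda$ singular behaviour to extract and no algebraic cancellations enforced by the projection $Q$ are needed.
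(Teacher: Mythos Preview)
Your approach is genuinely different from the paper's and, on any compact subset of $\{|\tau|\ge\varepsilon_4\}$, it is correct: you avoid the paper's Fredholm/compactness argument by bounding the explicit kernel from \eqref{eq:res0}--\eqref{eq:res2}, using the non-vanishing of $D(\im\tau)$ on $\im\R\setminus\{0\}$ (Proposition~\ref{prop:noeigh}) to control $W(\im\tau)^{-1}$ from below. The paper, by contrast, writes $R_{\mathcal L}^{+}(\im\tau)=(1-R_0^{+}(\im\tau)\partial_x f'(\phi))^{-1}R_0^{+}(\im\tau)$, observes that $R_0^{+}(\im\tau)\partial_x f'(\phi)$ is compact in $L^\infty(\R)$, and uses the absence of nonzero imaginary eigenvalues to invert $1-K(\tau)$ by Fredholm, together with $\|K(\tau)\|\to 0$ as $|\tau|\to\infty$ to obtain uniformity.

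There is however a real gap in your argument at large $|\tau|$. All three pieces $R_1$, $R_{21}$, $R_{23}$ involve $m_2$ (through $f_2$ or through $b_{12},b_{23}$), and your bound $|f_k(x,\lambda)b_{ij}(y,\lambda)|\lesssim(1+|\tau|^{1/3})e^{\Re\mu_k(x-y)}$ needs $|m_2|,|\partial_x m_2|\lesssim 1$ uniformly in $\lambda\in\im\R$. Lemma~\ref{lem:bdm2} does \emph{not} give this: its constants depend on the preassigned bound $a_0$ for $|\lambda|$, and the construction $f_2=\lambda\widetilde f_2+c_0(\lambda)f_3$ already carries an explicit factor $\lambda$, so $m_2$ has no reason to stay bounded as $|\lambda|\to\infty$. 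Consequently your $O(|\tau|^{-5/3})$ decay claim is unjustified as written. The fix is either (i) to upgrade Lemma~\ref{lem:bdm2} to a polynomial-in-$\lambda$ bound on $m_2$ (tracing through Lemmas~\ref{lem:estimate_tildef2}--\ref{lem:choicec0} this is plausible but requires work not in the paper), which would still beat the $|\tau|^{2}$ growth of $|W|$; or (ii) for $|\tau|$ larger than some fixed $M$ to use a Neumann series based on $\|R_0^{+}(\im\tau)\partial_x f'(\phi)\|_{\mathcal L(L^\infty)}\to 0$, which is exactly the large-$|\tau|$ part of the paper's proof, and keep your direct kernel estimate for $\varepsilon_4\le|\tau|\le M$.
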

 \begin{proof}
    We have for $R=   R_{\mathcal{L} }$ and for $R_{0}$ the resolvent of $-\partial _x^3 + \partial _x$,
    \begin{align*}
       R  (\im  \tau  ) = \( 1-  R_{0} (\im  \tau )  \partial _x f '(\phi )   \) ^{-1} R_{0}^{+}(\im  \tau  )  .
    \end{align*}
   Here $  R_{0} (\im  \tau )  \partial _x f '(\phi ) $  is a compact operator in $L^\infty (\R)$.  Recall, see \cite[formula (A.4)]{MizuKdV2001},
that $R_{0} (\im  \tau )$ is a convolution with the function
\begin{align*}
  R_{0} (\im  \tau , x) = \left\{
                                  \begin{array}{ll}
                                    a_1(\im \tau )   e^{\mu _1(\im \tau )x}, & \hbox{ if $x\ge 0$ and  } \\
                                    -  a_2(\im \tau  )   e^{\mu _2(\im \tau )x} - a_3(\im \tau )   e^{\mu _3(\im \tau )x}, & \hbox{  if $x\le 0$.}
                                  \end{array}
                                \right.
\end{align*}
Notice that thanks to \eqref{eq:sum_aj} we have $ R_{0} (\im  \tau )  \partial _x  = \partial _x  R_{0} (\im  \tau )  $
with the latter having kernel
\begin{align*}
  \widetilde{R}_{0} (\im  \tau  , x) = \left\{
                                  \begin{array}{ll}
                                    \mu _1(\im \tau )a_1(\im \tau )   e^{\mu _1(\im \tau )x}, & \hbox{ if $x\ge 0$ and  } \\
                                    -  \mu _2(\im \tau )a_2(\im \tau  )   e^{\mu _2(\im \tau )x} -\mu _3(\im \tau ) a_3(\im \tau )   e^{\mu _3(\im \tau )x}, & \hbox{  if $x\le 0$.}
                                  \end{array}
                                \right.
\end{align*}
So if $ g=  R_{0} (\im  \tau  )  \partial _x f '(\phi )g$ we have  $g\in L^2_{a}(\R)$ for $0<a<3^{-1/2}.$ But then $g$ is an eigenfunction   in  $  L^2_{a}(\R)$ with eigenvalue $\im  \tau$
with $\tau \in \R  \backslash (-\varepsilon _4, \varepsilon _4  )$,   not possible by Propositions \ref{prop:PEGOWEI2}  and \ref{prop:noeigh}. Furthermore  from \eqref{eq:estamu}    we have
\begin{align}\label{eq:rlarge}
  \|  R_{0} (\im  \tau  )  \partial _x f '(\phi ) \| _{\mathcal{L}(L^\infty (\R))} + \|  R_{0} (\im  \tau  )    \| _{\mathcal{L}(L^1 (\R) , L^\infty (\R))}  + \|  \partial _xR_{0} (\im \tau )    \| _{\mathcal{L}(L^1 (\R) , L^\infty (\R))}\xrightarrow{\tau \to \infty} 0.
\end{align}
Then  from the immersion $L^\infty (\R) \hookrightarrow \widetilde{\Sigma}$  we obtain
 \begin{align*}&
    \sup  _{ \tau \in  \R \backslash (-\varepsilon _4, \varepsilon _4  ) } \left \|     R_{\mathcal{L} } (\im  \tau )     \right \| _{ \mathcal{L}\( L^1\( \R\),   \widetilde{\Sigma} \)   } \\&  \lesssim   \sup  _{\tau  \in  \R \backslash (-\varepsilon _4, \varepsilon _4  )  } \left \|     \( 1-  R_{0} (\im  \tau )  \partial _x f '(\phi )   \) ^{-1}   \right \| _{ \mathcal{L}\( L^\infty (\R ) \)   }    \sup  _{ \tau \in \R } \|  R_{0} (\im  \tau )    \| _{\mathcal{L}(L^1 (\R) , L^\infty (\R))} <+\infty .
 \end{align*}

 \end{proof}
The above two lemmas  prove inequality \eqref{eq:smooth21}.

\bigskip

  \textit{Proof of  inequality \eqref{eq:smooth221}.} The proof
follows a   pattern similar to the proof of   \eqref{eq:smooth21}, but in fact it is easier. In higher energies we can use the bound
 \begin{align*}&
    \sup  _{\tau  \in  \R \backslash (-\varepsilon _4, \varepsilon _4  )  } \left \|     R_{\mathcal{L} } (\im  \tau)   \partial _x   \right \| _{ \mathcal{L}\( L^1\( \R\),   \widetilde{\Sigma} \)   }\\&   \lesssim   \sup  _{\tau \in   \R \backslash (-\varepsilon _4, \varepsilon _4  )  } \left \|     \( 1-  R_{0} (\im  \tau )  \partial _x f '(\phi )   \) ^{-1}   \right \| _{ \mathcal{L}\( L^\infty (\R ) \)   }    \sup  _{ \tau  \in \R } \|  \partial _x   R_{0} (\im  \tau)    \| _{\mathcal{L}(L^1 (\R) , L^\infty (\R))} <+\infty .
 \end{align*}
For $\tau  \in  (-\varepsilon _4, \varepsilon _4  )\backslash \{ 0  \}$ we proceed like in Lemma \ref{lem:smooth2small}. We have   for $\lambda =\im \tau $ \begin{align*}
   \(    R _{23}^{(0)}(\lambda   )   + R _{21} ^{(0)}(  \lambda )    \) \circ Q \circ   \partial _x=0 ,
\end{align*}
while we recall  $ R _{1} ^{(0)}(  \lambda )\equiv 0$.
Then, see \eqref{eq:defR(1)}, we have
\begin{align*}
  & R ^{(1)} (\lambda )   Q  g ' = R ^{(1)} (\lambda )    g '     +\sum _{j=1}^{2}\<  g, \eta _j'\>    R ^{(1)} (\lambda )  \xi _j
\end{align*}
where  obviously $|\<  g, \eta _j'\>|\lesssim \| g \|  _{L^1 \( \R  \)}$          by \eqref{eq:ker1} and by \eqref {eq:estR12} and  \eqref{eq:estR22}
\begin{align*}
   \|    R _{21}^{(1)}(\lambda  )   \xi _j \| _{L^\infty \( \R  \)}  + \|  R _{23}^{(1)}(\lambda  )    \xi _j\| _{L^\infty \( \R  \)} \lesssim    \|        \xi _j\| _{L^1 \( \R  \)}   .
\end{align*}
Using the operator  in \eqref{eq:defrhat1}, we have
\begin{align*}
  \|   R _{ 1}^{(1)}(\lambda  )   \xi _j \| _{L^\infty \( \R  \)}  &\le   \|  \( R _{ 1}^{(1)}(\lambda  ) -    \widehat{R} _{ 1}^{(1 )}(\lambda )  \)   \xi _j \| _{L^\infty \( \R  \)} + \|   \widehat{R} _{ 1}^{(1 )}(\lambda )  \xi _j \| _{L^\infty \( \R  \)}  \\& \lesssim  \|        \xi _j\| _{L^1 \( \R  \)} +\|      \widehat{R} _{ 1}^{(1 )}(\lambda )  \xi _j \| _{L^\infty \( \R  \)}    \text{  where}
\end{align*}
\begin{align*}
   \left | \widehat{R} _{ 1}^{(1)}(\lambda  )  \xi _j  \right | &  \lesssim  \frac{1 }{ |\lambda| } |f _3(x ,0   )|    \int_  x ^{+\infty}  \left |\xi _j(y)    \( e^{-\mu _2(\lambda) y} -1  \)  \right |   dy
\\& \lesssim   \frac{|\mu _2(\lambda)| }{ |\lambda| }    |\phi ' (x)|     \int_   {\R } | y \xi _j (y) |    dy\lesssim 1.
\end{align*}
Next, integrating by parts  and using the identity
\begin{align*}
   -   {f}_3(x ,z  )   {b}_{12}(x,z ) -   {f}_1(x ,z  )   {b}_{23}(x,z )  +   {f}_2(x ,z  )   {b}_{13}(x,z )\equiv 0  \end{align*}
which eliminates a boundary term, we have  \small
\begin{align*}
  R ^{(1)} (\lambda )      g ' & =   \frac{2^{-1}}{\lambda D(\lambda )W_0(\lambda ) }  \int _{0} ^{\lambda}  dz (\lambda -z)^{2}   \int_  {-\infty} ^x  g(y)  \partial _z ^3 \(     {f}_1(x ,z  )   \partial _y{b}_{23}(y,z )  \)      dy\\& +\frac{2^{-1}}{\lambda D(\lambda )W_0(\lambda ) }  \int _{0} ^{\lambda} dz   (\lambda -z)^{2}   \int_  x ^{+\infty}  g(y)  \partial _z ^3 \(     {f}_2(x ,z  )  \partial _y{b}_{13}(y,z ) \)  dy\\&
 - \frac{1}{\lambda D(\lambda )W_0(\lambda ) }     \int_  x ^{+\infty}  g(y) \widetilde{ \gamma} (x,y, \lambda )     dy  =:\sum _{j=1}^{3} \mathcal{R}_j(\lambda) g \text{ where}
 \\
 \widetilde{ \gamma} (x,y, \lambda )& = {f}_2(x ,\lambda   )   \partial _y{b}_{13}(y,\lambda )   - \frac{\lambda ^2}{2}  {f}_2(x ,0   )    \partial _y\partial _\lambda ^2 {b}_{13}(y,0 ) .
\end{align*}\normalsize
By the  arguments  applied to $  R _{21}^{(1)}(\lambda  )$ and   $  R _{23}^{(1)}(\lambda  )$ it is easy to show that
\begin{align*}
   \| \mathcal{R}_j(\lambda) g \| _{\widetilde{\Sigma}}\lesssim  \| g\| _{L^1(\R )} \text{ for $j=2,3$.}
\end{align*}
As we show now the term $\mathcal{R}_1(\lambda) g$  is simpler than $  R _{ 1}^{(1)}(\lambda  ) g$. We write
\begin{align} & \nonumber
    \widetilde{ \gamma} (x,y, \lambda ) =
-\mu _2(\lambda) e^{-\mu _2(\lambda)y}    \(  {f}_2(x ,\lambda   )   e^{ \mu _2(\lambda)y}  {b}_{13}(y,\lambda )   - \frac{\lambda ^2}{2}  {f}_2(x ,0   )   \partial _\lambda ^2 {b}_{13}(y,0 )   \) \\& \nonumber +   e^{-\mu _2(\lambda)y}    {f}_2(x ,\lambda   ) \partial _y \(  e^{ \mu _2(\lambda)y}  {b}_{13}(y,\lambda )\)  - \frac{\lambda ^2}{2}  {f}_2(x ,0   )    \partial _y\partial _z ^2   \( e^{ \mu _2(z)y}  {b}_{13}(y,z )\) |_{z=0} \\& -  \mu _2(\lambda) e^{-\mu _2(\lambda)y}  \frac{\lambda ^2}{2}    f _2(x ,0   )   \partial _\lambda ^2 {b}_{13}(y,0 ) =:\sum _{j=1}^{3}\widetilde{ \gamma}_j (x,y, \lambda ) \label{eq:gammatilde3}
\end{align}
where we use $\partial ^n _y\partial _z ^2   \( e^{ \mu _2(z)y}  {b}_{13}(y,z )\) |_{z=0}=  \partial ^n _y \partial _\lambda ^2 {b}_{13}(y,0 )$ for any $n$ by Corollary \eqref{lem:b13at0}.
Then  \small
\begin{align*} &
 \left | \frac{1}{\lambda D(\lambda )W_0(\lambda ) }  \int_  x ^{+\infty}  g(y)  \widetilde{ \gamma}_1 (x,y, \lambda ) dy \right | \\ &\lesssim \frac{1}{|\lambda |^2}  \int _{0}^{|\lambda|}  |\lambda |^2  \int_  x ^{+\infty} dy |g(y)| \left | \partial _z ^3 \(   e^{\mu (z) 2 x} {m}_2(x ,z   )   e^{ \mu _2(\lambda)y}  {b}_{13}(y,z )   \)    \right | dz   \lesssim  |\lambda | \< x \> ^3  \int_  x ^{+\infty}    |g(y)| dy
\end{align*} \normalsize
by  \eqref{eq:m1_estimate_partial}  and \eqref{eq:b13_estimate_partial}. For the same reasons \small
\begin{align*} &
 \left | \frac{1}{\lambda D(\lambda )W_0(\lambda ) }  \int_  x ^{+\infty}  g(y)  \widetilde{ \gamma}_2 (x,y, \lambda ) dy \right | \\ &\lesssim \frac{1}{|\lambda |^3}  \int _{0}^{|\lambda|}  |\lambda |^2  \int_  x ^{+\infty} dy |g(y)| \left | \partial _z ^3 \(   e^{\mu (z) 2 x} {m}_2(x ,z   ) \partial _y \(  e^{ \mu _2(\lambda)y}  {b}_{13}(y,z )   \)  \)    \right | dz  \\&  \lesssim    \< x \> ^3  \int_  x ^{+\infty}    |g(y)| dy .
\end{align*} \normalsize
Finally, thanks to the $\mu _2(\lambda )$ factor appearing in  \eqref{eq:gammatilde3} and conspicuously absent  in \eqref{eq:defB2}, we have
\begin{align*} &
 \left | \frac{1}{\lambda D(\lambda )W_0(\lambda ) }  \int_  x ^{+\infty}  g(y)  \widetilde{ \gamma}_3 (x,y, \lambda ) dy \right | \\ &\lesssim \frac{  | \mu _2(\lambda ) \lambda ^2| }{|\lambda |^3}    |\phi ' (x)|     \int_  x ^{+\infty} dy |g(y) (2D''(0) +b_0 \phi (y))        |         \lesssim   |\phi ' (x)|    \int_  x ^{+\infty}    |g(y)| dy.
\end{align*}
  The above guarantee the following thus   completing the proof of  \eqref{eq:smooth221}  and thus of   Proposition  \ref{prop:smooth2},
\begin{align*}
  \|   R ^{(1)} (\lambda )      g '     \| _{\widetilde{\Sigma}} \le C(p) \| g\| _{L^1(\R )}  \text{ for all $g\in L^1(\R )$ and all $\lambda   \in  \im (-\varepsilon _4, \varepsilon _4  ) \backslash \{ 0  \}$.}
\end{align*}

\qed

\section{Proof of Lemma \ref {lem:gKdV21hard} }\label{sec:gKdV21hard}

 For $\varepsilon _4$  the constant in \eqref{eq:smooth21small} and \eqref{eq:smooth21large},  we have
\begin{align*}
    \int _{0}^t e^{  (t-s)\mathcal{L}} Q   \zeta_{B}'v      ds &= (2\pi )^{-\frac{1}{2}}\int _{|\tau|< \varepsilon _4} e^{\im \tau t}    R _{\mathcal{L}}(\im \tau ) Q \zeta _B ' \widehat{v}(\tau )    d\tau\\& + (2\pi )^{-\frac{1}{2}}\int _{|\tau|> \varepsilon _4} e^{\im \tau t}    R _{\mathcal{L}}(\im \tau ) Q \zeta _B ' \widehat{v}(\tau )    d\tau=:T_1(t) \zeta_{B}'v  + T_2(t) \zeta_{B}'v.
\end{align*}
Then like in \eqref{eq:fourtran} but using $ L^{ 1 }\( \R\)$    instead of $ L^{ 1,1 }\( \R\)$ thanks to Lemma \ref{lem:smooth2large}  we have
\begin{align*}&
     \left \| T_2  \zeta_{B}'v \right \| _{L^2 \( I ,    \widetilde{\Sigma}\) } \le  \left \|     R_{\mathcal{L} }^+ (\im \tau )  Q    \right \| _{L^\infty \(  |\tau|> \varepsilon _4  ,    \mathcal{L} \( L^{ 1 }\( \R\),   \widetilde{\Sigma} \)  \) }  \left \|         \zeta _B '  {v}    \right \| _{L^1 \( \R   ,    L^{ 2 }\( I \)\) } \\& \lesssim   \left \|         \frac{\zeta _B ' }{\zeta _A} \right \|_{ L^{ 2 }\( \R\)}        \left \|  \zeta _A  (\vartheta _{1A_1} + \vartheta _{2A_1})   {v}     \right \| _{L^2 \( I \times  \R \) } \lesssim B^{-1/2}\(  \|v \| _{L^2(I,  { \Sigma }_{1A A_1} )} +
\|v \| _{L^2(I,  { \Sigma }_{2A A_1} )} \) .
\end{align*}
Next, we write
  \begin{align*}
    T_1(t) \zeta_{B}'v &= (2\pi )^{-\frac{1}{2}}\int _{|\tau|< \varepsilon _4} e^{-\im \tau t}    \(   R _{\mathcal{L}}(\im \tau )  -  \widehat{R} _{ 1}^{(13)}(\im  \tau ) \)Q \zeta _B ' \widehat{v}(\tau )    d\tau \\& + (2\pi )^{-\frac{1}{2}}\int _{|\tau|< \varepsilon _4} e^{-\im \tau t}      \widehat{R} _{ 1}^{(13)}(\im  \tau ) Q \zeta _B ' \widehat{v}(\tau )    d\tau=: T _{11}(t) \zeta_{B}'v  + T _{12}(t) \zeta_{B}'v.
  \end{align*}
 By \eqref{eq:smooth21smallalm} like for $T_2$ we obtain
   \begin{align*}&
     \left \| T _{11}  \zeta_{B}'v \right \| _{L^2 \( I ,    \widetilde{\Sigma}\) } \le  \left \|      \( R _{\mathcal{L}}(\im \tau )  -  \widehat{R} _{ 1}^{(13)}(\im  \tau ) \)Q  \right \| _{L^\infty \(  |\tau|< \varepsilon _4  ,    \mathcal{L} \( L^{ 1 }\( \R\),   \widetilde{\Sigma} \)  \) }  \left \|         \zeta _B '  {v}    \right \| _{L^1 \( \R   ,    L^{ 2 }\( I \)\) } \\& \lesssim B^{-1/2}\(  \|v \| _{L^2(I,  { \Sigma }_{1A A_1} )} +
\|v \| _{L^2(I,  { \Sigma }_{2A A_1} )} \) .
\end{align*}
 Finally we examine
\begin{align*}
  \widehat{R} _{ 1}^{(13)}(\im  \tau )Q\zeta_{B}'\widehat{v} =   \widehat{R} _{ 1}^{(13)}(\im  \tau ) \zeta_{B}'\widehat{v} - \widehat{R} _{ 1}^{(13)}(\im  \tau ) P\zeta_{B}'\widehat{v}.
\end{align*}
Recall \eqref{def:tildephi}. Omitting irrelevant factors, we  have
\begin{align*}
    & \widehat{R} _{ 1}^{(13)}(\im  \tau ) \zeta_{B}'\widehat{v}(\tau) (x)  = \frac{\phi ' (x)}{\im \tau } \int_  {\R }   \( e^{-\mu _2(\im  \tau) y} -1  \)   \widetilde{\phi} (y)\zeta_{B}' (y) \widehat{v}(\tau , y) dy \\& =  \frac{\phi ' (x)}{\im \tau } \int_  {\R_- }   \( e^{-\mu _2(\im  \tau) y} -1  \)   \widetilde{\phi} (y)\frac{\zeta_{B}' (y)}{\zeta_{A}  (y)}\zeta_{A}  (y) \widehat{v}(\tau , y) dy \\& + \frac{\phi ' (x)}{\im \tau } \int_  {\R _+}   \( e^{-\mu _2(\im  \tau) y} -1  \)   \widetilde{\phi} (y)\frac{\zeta_{B}' (y)}{\zeta_{A}  (y)}\zeta_{A}  (y) \vartheta _{1A_1}(y) \widehat{v}(\tau , y) dy\\& + \frac{\phi ' (x)}{\im \tau } \int_  {\R _+}   \( e^{-\mu _2(\im  \tau) y} -1  \)   \widetilde{\phi} (y)\frac{\zeta_{B}' (y)}{\zeta_{A}  (y)}\zeta_{A}  (y) \vartheta _{2A_1}(y) \widehat{v}(\tau , y) dy =:\sum _{j=1}^{3}\mathbf{A}_j (\tau , x).
\end{align*}
Then,
\begin{align*}&
  \|  \mathbf{A}_1 (\tau , x)\| _{L^2( |\tau|< \varepsilon _4, \widetilde{\Sigma}) }  \\& \lesssim  \left \| \frac{|\mu _2(\im  \tau)|}{|\tau|} \right \| _{L^\infty(|\tau|< \varepsilon _4  )}    \| \phi '\| _{ \widetilde{\Sigma}} \|  \<y \> \widetilde{\phi}\| _{L^2(\R _-)}   \left  \| \frac{\zeta_{B}'  }{\zeta_{A}  }\right \| _{L^\infty(\R  )}
 \(  \|v \| _{L^2(I,  { \Sigma }_{1A A_1} )} +
\|v \| _{L^2(I,  { \Sigma }_{2A A_1} )} \) \\& \lesssim B ^{-1}  \(  \|v \| _{L^2(I,  { \Sigma }_{1A A_1} )} +
\|v \| _{L^2(I,  { \Sigma }_{2A A_1} )} \).
\end{align*}
 Next, we have
\begin{align*}&
  \|  \mathbf{A}_2 (\tau , x)\| _{L^2( |\tau|< \varepsilon _4, \widetilde{\Sigma}) }  \\& \lesssim  \left \| \frac{|\mu _2(\im  \tau)|}{|\tau|} \right \| _{L^\infty(|\tau|< \varepsilon _4  )}    \| \phi '\| _{ \widetilde{\Sigma}} \|  \widetilde{\phi}\| _{L^\infty(\R )}   \left  \| \< y\> \frac{\zeta_{B}'  }{\zeta_{A}  }\right \| _{L^2\( 0, 2A_1  \)}
    \|v \| _{L^2(I,  { \Sigma }_{1A A_1} )}    \lesssim B ^{-1} A_1^{3/2}   \|v \| _{L^2(I,  { \Sigma }_{1A A_1} )}  .
\end{align*}
Finally, we have
\begin{align*}
  \|  \mathbf{A}_3 (\tau , x)\| _{L^2( |\tau|< \varepsilon _4, \widetilde{\Sigma}) } &\lesssim  \left \| \frac{|\mu _2(\im  \tau)|}{|\tau|} \right \| _{L^\infty(|\tau|< \varepsilon _4  )}    \| \phi '\| _{ \widetilde{\Sigma}} \|  \widetilde{\phi}\| _{L^\infty(\R )} \\& \times \left  \| \< y\> \frac{\zeta_{B}'  }{\zeta_{A}  }\right \| _{L^2\(\R  \)}
    \|v \| _{L^2(I,  { \Sigma }_{2A A_1} )}    \lesssim B  ^{1/2}
\|v \| _{L^2(I,  { \Sigma }_{2A A_1} )}  .
\end{align*}
We have
\begin{align*}
    & \widehat{R} _{ 1}^{(13)}(\im  \tau )P \zeta_{B}'\widehat{v} (x)  = \sum _{j=1}^{2}\frac{\phi ' (x)}{\im \tau } \int_  {\R }   \( e^{-\mu _2(\im  \tau) y} -1  \)   \widetilde{\phi} (y) \xi _j  (  y) dy \< \eta _j ,\frac{\zeta_{B}'  }{\zeta_{A}  } \zeta_{A} (\vartheta _{1A_1} + \vartheta _{2A_1}) \widehat{v}(\tau ) \>.
\end{align*}
Then we obtain the following, which completes the proof of Lemma  \ref {lem:gKdV21hard}, \small
\begin{align*}
   &\|  \widehat{R} _{ 1}^{(13)}(\im  \tau )P \zeta_{B}'\widehat{v}\| _{L^2( |\tau|< \varepsilon _4, \widetilde{\Sigma}) }\\& \lesssim  \left \| \frac{|\mu _2(\im  \tau)|}{|\tau|} \right \| _{L^\infty(|\tau|< \varepsilon _4  )}    \| \phi '\| _{ \widetilde{\Sigma}}\|  \widetilde{\phi}\| _{L^\infty(\R )}   \sum _{j=1}^{2} \| \xi _j\|_{L^{1,1}\( \R \) }\left \|  \frac{\zeta_{B}'  }{\zeta_{A}  }\right \| _{L^2\( \R  \)} \(  \|v \| _{L^2(I,  { \Sigma }_{1A A_1} )} +
\|v \| _{L^2(I,  { \Sigma }_{2A A_1} )} \)\\& \lesssim  B^{-1/2}  \(  \|v \| _{L^2(I,  { \Sigma }_{1A A_1} )} +
\|v \| _{L^2(I,  { \Sigma }_{2A A_1} )} \) .
\end{align*}
\normalsize   \qed

\section{Proof of Proposition \ref {prop:noeigh} }\label{sec:noeigh}

In this section, we prove what is left of    Proposition \ref {prop:noeigh}, that is    that $\mathcal{L}_{[c]}$ has no nonzero purely imaginary eigenvalue.
First of all, it is elementary to see by scaling that it suffices to consider case $c=1$. Next,
we  observe the following.
\begin{lemma}\label{lem:L_cadj}
    Let $\lambda>0$.
    Then $\im \lambda$ is an eigenvalue of $\mathcal{L} $ if and only if it is an eigenvalue of $L_ +\partial_x$.
\end{lemma}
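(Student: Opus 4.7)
The lemma is essentially algebraic, since $\mathcal{L} = \partial_x L_+$ while $L_+ \partial_x$ differs only by the order of composition. The plan is to exhibit explicit maps between the two eigenspaces for the eigenvalue $\im \lambda$, given by $u \mapsto L_+ u$ in one direction and $v \mapsto \partial_x v$ in the other, and to verify that neither map kills a nonzero eigenfunction when $\lambda \neq 0$.

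For the forward implication, I would let $u$ be a nonzero eigenfunction of $\mathcal{L}$ with eigenvalue $\im \lambda$ and set $v := L_+ u$. A one-line computation
\begin{equation*}
L_+ \partial_x v = L_+ \partial_x L_+ u = L_+(\mathcal{L} u) = \im \lambda L_+ u = \im \lambda v
\end{equation*}
shows that $v$ is an eigenfunction of $L_+ \partial_x$ for the same eigenvalue. To check that $v$ is nonzero, note that if $L_+ u = 0$ then $\mathcal{L} u = \partial_x L_+ u = 0$, which combined with $\mathcal{L} u = \im \lambda u$ and $\lambda \neq 0$ would force $u = 0$, a contradiction.

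For the converse, I would take a nonzero $v$ with $L_+ \partial_x v = \im \lambda v$ and set $u := \partial_x v$. Then
\begin{equation*}
\mathcal{L} u = \partial_x L_+ \partial_x v = \partial_x (\im \lambda v) = \im \lambda u,
\end{equation*}
and the nonvanishing of $u$ follows because $\partial_x v = 0$ would imply $v$ is a constant, which in the relevant function space (e.g.\ $L^2$ or $L^2_a$ with $a>0$) must be the zero constant, and then $L_+ \partial_x v = 0 = \im \lambda v$ gives $v = 0$.

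There is no real obstacle to this argument; the only point requiring mild care is the regularity/domain matching between $\mathcal{L}$ (acting on $H^3$, so $L_+ u$ is automatically at least in $H^1$) and $L_+ \partial_x$, but standard elliptic/ODE regularity for the smooth, rapidly decaying coefficient operator $L_+$ makes both constructed functions belong to the appropriate (possibly exponentially weighted) $L^2$ space in which eigenfunctions are considered.
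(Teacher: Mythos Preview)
Your proof is correct and follows essentially the same approach as the paper: both use the intertwining maps $u\mapsto L_+u$ and $v\mapsto\partial_x v$ between the eigenspaces of $\partial_x L_+$ and $L_+\partial_x$. Your argument is in fact slightly more complete, since you explicitly verify that the constructed eigenfunctions are nonzero, a point the paper leaves implicit.
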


\begin{remark}
    Since if $\im \lambda$ ($\lambda\in \R$) is an eigenvalue of $\mathcal{L} $ with an eigenfunction $\psi$, then $-\im \lambda$ is also an eigenvalue with the eigenfunction $\overline{\psi}$.
    Thus, it suffices to show the nonexistence of an embedded eigenvalue $\im \lambda$ with $\lambda>0$.
\end{remark}

\begin{proof}
    Let $\im \lambda$ be an eigenvalue with associated eigenfunction $\varphi$.
    From the results in Section \ref{sec:jost}, $\varphi$ and its derivatives  decay exponentially.
    Now, since $\varphi$ satisfies $\partial_x L_+\varphi=\im \lambda\varphi$, setting $\psi=L_+\varphi$, we have
    \begin{align*}
        L_+\partial_x \psi=L_+ (\partial_x L_+ \varphi)=\im \lambda L_+ \varphi=\im \lambda \psi.
    \end{align*}
    Thus, $\psi$ is an eigenvalue of $L_+\partial_x$ associated to $\im \lambda$.
    Next, if $\psi$ satisfies $L_+ \partial_x \psi=\im \lambda \psi$, then $\varphi:=\partial_x \psi$ satisfies
    \begin{align*}
        \partial_x L_+ \varphi=\partial_x L_+ \partial_x \psi=\im \lambda \partial_x \psi=\im \lambda \varphi.
    \end{align*}
    Therefore, we have the conclusion.
\end{proof}

We prove the non-existence of non-zero purely imaginary eigenvalue of $L_+\partial_x$ by using the virial functional   in \cite{MaMegeneral2008}.
\begin{proposition}
    $\mathcal{L} $ has no eigenvalue in $\im \R\setminus\{0\}$.
\end{proposition}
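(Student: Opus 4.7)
The plan is to combine Lemma~\ref{lem:L_cadj} with the dual-linearized virial functional of Martel--Merle~\cite{MaMegeneral2008}. By Lemma~\ref{lem:L_cadj} together with the symmetry $\psi\mapsto\overline{\psi}$, it suffices to rule out eigenvalues $\im\lambda$ of $L_+\partial_x$ with $\lambda>0$. Assume for contradiction that $L_+\partial_x\psi=\im\lambda\psi$ for some $\psi\in L^2(\R)\setminus\{0\}$ and some $\lambda>0$. First I would obtain exponential decay: from the Jost function analysis of Section~\ref{sec:jost} (Lemmas~\ref{lem:estimate_f1}, \ref{lem:estimate_f3} and~\ref{lem:extrajost}), any $L^2$-solution of the ODE $L_+\partial_x u=\im\lambda u$ must lie in the span of the exponentially decaying Jost solutions at both $\pm\infty$, so $\psi$ and all its derivatives belong to $L^2_a(\R)$ for some $a\in(0,\sqrt{1/3})$.

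The heart of the argument is then the virial functional. I would set $U(t,x):=\Re\bigl(e^{\im\lambda t}\psi(x)\bigr)$ and $V(t,x):=\Im\bigl(e^{\im\lambda t}\psi(x)\bigr)$, so that both $U$ and $V$ are real-valued smooth solutions of the dual linearized equation $\partial_t w=L_+\partial_x w$, each bounded in $L^2$ by $\|\psi\|_{L^2}$. I would then invoke the virial identity of~\cite{MaMegeneral2008}: for a suitable bounded weight $\sigma$ and a strictly positive continuous weight $\rho$ concentrated near the soliton, every smooth real solution $w$ of this dual equation with sufficient decay satisfies
\begin{equation*}
\frac{d}{dt}\int_{\R}\sigma(x)\,w(t,x)^2\,dx\leq -c\int_{\R}\rho(x)\,w(t,x)^2\,dx
\end{equation*}
for some $c>0$. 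Integrating over $t\in[0,T]$, the left-hand boundary term is bounded by $2\|\sigma\|_{L^\infty}\|\psi\|_{L^2}^2$ uniformly in $T$, giving $c\int_0^\infty\!\!\int_{\R}\rho\,w^2\,dx\,dt<\infty$. Since $t\mapsto\int_{\R}\rho\,w^2\,dx$ is a non-negative almost-periodic function, integrability on $[0,\infty)$ forces it to vanish identically. Applied with $w=U$ and $w=V$, this shows $\psi\equiv 0$ on $\supp\rho$.

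Finally, $\psi$ satisfies the third-order linear ODE $L_+\psi'=\im\lambda\psi$ with smooth coefficients, so vanishing on the open set $\supp\rho$ makes $\psi,\psi',\psi''$ all vanish at some point, and standard ODE uniqueness then yields $\psi\equiv 0$ on $\R$, contradicting $\psi\neq 0$. The main obstacle will be to import the virial inequality of~\cite{MaMegeneral2008}, originally designed for real perturbations in the orbital stability framework, into this purely spectral context: one must verify that the weights $\sigma,\rho$ produce a coercive lower bound even for oscillatory (almost-periodic) solutions like $U,V$, and that the integrations by parts generating the identity are legitimate for our $\psi$. Both reductions follow from the exponential decay established in the first step, so the only substantive point is to transcribe the appropriate version of the Martel--Merle virial to our nonlinearities $f$ in~\eqref{eq:nonlinearity}.
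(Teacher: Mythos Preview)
Your strategy---reduce via Lemma~\ref{lem:L_cadj} to the dual operator $L_+\partial_x$ and then apply the Martel--Merle virial from~\cite{MaMegeneral2008}---is exactly the paper's. But your execution diverges from the paper in two ways, one cosmetic and one substantive.

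The cosmetic difference: you introduce the time-periodic solutions $U,V$ of $\partial_t w=L_+\partial_x w$, integrate the virial in $t$, and invoke almost-periodicity. The paper bypasses all of this by applying the virial \emph{statically} to the eigenfunction itself: from $L_+\psi'=\im\lambda\psi$ one immediately has $\Re\int\overline{\psi}\,\mu\,L_+\psi'\,dx=\Re(\im\lambda)\int\mu|\psi|^2\,dx=0$, with $\mu=-\phi'/\phi$, and then an explicit integration by parts using the substitution $w=\psi/\phi$ gives the identity
\[
0=\Re\int\overline{\psi}\,\mu\,L_+\psi'\,dx=-\frac{3(p-1)}{2(p+1)}\int_{\R}\bigl|(\psi/\phi)'\bigr|^2\phi^{p+1}\,dx.
\]
No time variable, no integration, no almost-periodicity needed.

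The substantive gap: the virial inequality you assert, $\frac{d}{dt}\int\sigma w^2\le -c\int\rho w^2$ with $\rho>0$, is \emph{not} what the Martel--Merle dual virial gives. The right-hand side is (up to constants) $-\int|(w/\phi)'|^2\phi^{p+1}$, which vanishes on $\Span\{\phi\}$, not only at $w=0$. Consequently your endgame---``$\psi\equiv0$ on $\supp\rho$, then ODE uniqueness''---does not follow. The correct conclusion from the virial is $\psi=C\phi$ for some constant $C$; one then finishes by observing that $L_+\partial_x\phi=L_+\phi'=0\neq\im\lambda\,\phi$ unless $C=0$. This is exactly what the paper does, in two lines, after the displayed identity above.
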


\begin{proof}
    From Lemma \ref{lem:L_cadj}, it suffices to show that $L_+\partial_x$ does not have an eigenvalue in $\im \R\setminus\{0\}$.
     Assume $\im \lambda$ is an eigenvalue of $L_+\partial_x$ and let $\psi$ be the corresponding eigenfunction.
    We set $$\mu(x)=-\phi'(x)/\phi(x)=\frac{p-1}{2}\tanh\(\frac{p-1}{2}x\).$$
    Then, we have
    \begin{align*}
        \Re \int \overline{\psi} (L_+\psi')\mu \,dx=\Re \im \lambda \int |\psi|^2\mu\,dx=0.
    \end{align*}
    Now, setting $w=\psi/\phi$, we have
    \begin{align*}
    \Re \int \overline{\psi} (L_+\psi')\mu\,dx=&
        \Re\int \psi'\overline{L_+(\psi\mu)}dx\\&
        =\Re\int (w'\phi +w\phi ')\overline{(2w'\phi ''+w''\phi ')}dx\\&
          =\Re\int (2|w'|^2\phi \phi ''+w'\overline{w''}\phi \phi '+2w \overline{w'}\phi '\phi ''+w\overline{w''}(\phi ')^2)dx\\&
         =\Re\int (2|w'|^2\phi \phi ''-\frac{1}{2}|w'|^2(\phi \phi ')'-(w\overline{w'})'(\phi ')^2+w\overline{w''}(\phi ')^2)dx\\&
        =\Re\int (2|w'|^2\phi \phi ''-\frac{1}{2}|w'|^2(\phi ')^2-\frac{1}{2}|w'|^2\phi \phi ''-|w'|^2(\phi ')^2)dx\\&
        =-\frac{3}{2}\int |w'|^2\(-\phi \phi ''+(\phi ')^2\)dx.
    \end{align*}
    From $\phi''=\phi-\phi^p$ and $(\phi')^2=\phi^2-\frac{2}{p+1}\phi^{p+1}$, we have
    \begin{align*}
        -\phi \phi ''+(\phi ')^2=\frac{p-1}{p+1}\phi^{p+1}.
    \end{align*}
    Therefore, we have
    \begin{align}\label{eq:virialMM08}
        0=-\frac{3(p-1)}{2(p+1)}\int_{\R} \left|\(\frac{\psi(x)}{\phi(x)}\)'\right|^2\phi(x)^{p+1}\,dx.
    \end{align}
    This shows that there exists $C$ s.t. $\psi(x)=C\phi(x)$.
    However, this will imply that $\psi$ is an eigenfunction of $0$ and contradicts our assumption.
\end{proof}

\appendix
\section{Appendix: conservation of $\mathbf{Q}$}\label{app:A}
In this section, we prove the conservation of $\mathbf{Q}$, which is well-known when $f\in C^3$ but seems to need verification for small $p$.
For the proof, we follow Ozawa's argument \cite{Ozawa06CV}.

    \begin{proposition}\label{prop:consQ}
    Let $T^*>0$ and  $u\in C([0,T^*),H^1(\R))$ be a solution of \eqref{gKdVintegral}.
    Then, we have $\mathbf{Q}(u(t))=\mathbf{Q}(u_0)$ for all $t\geq 0$.
\end{proposition}

\begin{proof}
    We have
    \begin{align}
        \|u(t)\|_{L^2}^2&=\|e^{t\partial_x^3}u(t)\|_{L^2}^2\nonumber\\
        &=\|u_0 - \int_0^t e^{s\partial_x^3}\partial_x(f(u(s)))\,ds \|_{L^2}^2\nonumber\\
        &=\|u_0\|_{L^2}^2-2\int_0^t\<u_0,e^{s\partial_x^3}\partial_x(f(u(s)))\>\,ds\nonumber\\&\quad+2\int_0^t\<e^{s_1\partial_x^3}\partial_x(f(u(s_1))),\int_0^{s_1}e^{s_2\partial_x^3}\partial_x(f(u(s_2)))\,ds_2\>ds_1,\label{eq:Ozarg1}
    \end{align}
    where for the last line, we have used
    \begin{align*}
        &\int_{[0,t]^2}\<e^{s_1\partial_x^3}\partial_x(f(u(s_1))),e^{s_2\partial_x^3}\partial_x(f(u(s_2)))\>\,ds_1ds_2\\&
          =\(\int_{(s_1,s_2)\in[0,t],s_1\geq s_2}
           +\int_{(s_1,s_2)\in[0,t],s_2\geq s_1}\)
         \<e^{s_1\partial_x^3}\partial_x(f(u(s_1))),e^{s_2\partial_x^3}\partial_x(f(u(s_2)))\>\,ds_1ds_2\\&
         =2\int_{(s_1,s_2)\in[0,t],\ s_1\geq s_2}\<\partial_x(f(u(s_1))),e^{s_2\partial_x^3}\partial_x(f(u(s_2)))\>\,ds_1ds_2.
    \end{align*}
    Now, from \eqref{gKdVintegral}, the last term in \eqref{eq:Ozarg1} can be rewritten as
    \begin{align}
        &2\int_0^t\<e^{s_1\partial_x^3}\partial_x(f(u(s_1))),\int_0^{s_1}e^{s_2\partial_x^3}\partial_x(f(u(s_2)))\,ds_2\>ds_1\nonumber\\&=2\int_0^t\<e^{s_1\partial_x^3}\partial_x(f(u(s_1))),u_0-e^{s_1\partial_x^3}u(s_1)\>\,ds_1\nonumber\\&
        =2\int_0^t\<u_0,e^{s_1\partial_x^3}\partial_x(f(u(s_1)))\>\,ds_1-2\int_0^t\<\partial_x(f(u(s_1))),u(s_1)\>\,ds_1.\label{eq:Ozarg2}
    \end{align}
    Finally, since $\partial_x (f(u)) u= \partial_x g$, where $g=\frac{p}{p+1}|u|^{p+1}$ in the case $f(u)=|u|^{p-1}u$ and $g=\frac{p}{p+1}|u|^p u$ in the case $f(u)=|u|^p$.
    Thus, we see that the last term in \eqref{eq:Ozarg2} vanishes.
    Therefore, we have the conclusion.
\end{proof}

\section*{Acknowledgments}
C. was supported   by the Prin 2020 project \textit{Hamiltonian and Dispersive PDEs} N. 2020XB3EFL.
M.  was supported by the JSPS KAKENHI Grant Number 23H01079 and 24K06792.

	\begin{footnotesize}
		\bibliography{references}
		\bibliographystyle{plain}
	\end{footnotesize}

Department of Mathematics, Informatics and Geosciences,  University
of Trieste, via Valerio  12/1  Trieste, 34127  Italy.
{\it E-mail Address}: {\tt scuccagna@units.it}

Department of Mathematics and Informatics,
Graduate School of Science,
Chiba University,
Chiba 263-8522, Japan.
{\it E-mail Address}: {\tt maeda@math.s.chiba-u.ac.jp}

\end{document}